\newtheorem{theorem}{Theorem}[section]
\newtheorem{lemma}[theorem]{Lemma}
\newtheorem{corollary}[theorem]{Corollary}
\newtheorem{proposition}[theorem]{Proposition}
\theoremstyle{definition}
\newtheorem{definition}[theorem]{Definition}
\newtheorem{example}[theorem]{Example}
\newtheorem{remark}[theorem]{Remark}
\date{\today}
\begin{document}
		\title[]{On polarization of spherical codes and designs}
	\author{P. G. Boyvalenkov} 
\address{ Institute of Mathematics and Informatics, Bulgarian Academy of Sciences \\
8 G Bonchev Str., 1113  Sofia, Bulgaria}
\email{peter@math.bas.bg}

\author{P. D. Dragnev}
\address{ Department of Mathematical Sciences, Purdue University \\
Fort Wayne, IN 46805, USA }
\email{dragnevp@pfw.edu}

\author{D. P. Hardin}
\address{ Center for Constructive Approximation, Department of Mathematics \\
Vanderbilt University, Nashville, TN 37240, USA }
\email{doug.hardin@vanderbilt.edu}

\author{E. B. Saff}
\address{ Center for Constructive Approximation, Department of Mathematics \\
Vanderbilt University, Nashville, TN 37240, USA }
\email{edward.b.saff@vanderbilt.edu}

\author{M. M. Stoyanova} 
\address{ Faculty of Mathematics and Informatics, Sofia University "St. Kliment Ohridski"\\
5 James Bourchier Blvd., 1164 Sofia, Bulgaria}
\email{stoyanova@fmi.uni-sofia.bg}
	 \begin{abstract}  
In this article we investigate the $N$-point min-max and the max-min polarization problems on the sphere for a large class of potentials in $\mathbb{R}^n$. We derive universal lower and upper bounds on the polarization of spherical designs of fixed dimension, strength, and cardinality. The bounds are universal in the sense that they are a convex combination of potential function evaluations with nodes and weights independent of the class of potentials. As a consequence of our lower bounds, we obtain the Fazekas-Levenshtein bounds on the covering radius of spherical designs. Utilizing the existence of spherical designs, our polarization bounds are extended to general configurations. As examples we completely solve the min-max polarization problem for $120$ points on $\mathbb{S}^3$ and show that the $600$-cell is universally optimal for that problem. We also provide alternative methods for solving the max-min polarization problem when the number of points $N$ does not exceed the dimension $n$ and when $N=n+1$. We further show that the cross-polytope has the best max-min polarization constant among all spherical $2$-designs of $N=2n$ points for $n=2,3,4$; for $n\geq 5$, this statement is conditional on a well-known conjecture that the cross-polytope has the best covering radius. This max-min optimality is also established for  all so-called centered codes.
 \end{abstract}
 		\maketitle

{\bf Keywords.} Spherical designs, Spherical codes, Polarization, Universal bounds, Discrete potentials

\section{Introduction}

Let $\mathbb{S}^{n-1}\subset \mathbb{R}^n$ denote the unit sphere. A collection of distinct points $C=\{x_1,x_2,\ldots,x_N\} \subset   \mathbb{S}^{n-1}$ is called {\em a spherical code}. 
For a function $h:[-1,1] \to (-\infty,+\infty]$, continuous and finite on $[-1,1)$, we consider the {\em discrete $h$-potential associated with $C$}
\[ U_h(x,C):=\sum_{y\in C}h(x \cdot y),\]
where $x \in \mathbb{S}^{n-1}$ is arbitrary. Let 
\begin{equation} \label{max-min}
\mathcal{Q}_h(C):=\inf_{x \in \mathbb{S}^{n-1}} U_h(x,C), \ \ \mathcal{Q}^{(h)}(N):=\sup_{|C|=N} \mathcal{Q}_h(C), 
\end{equation}
be the {\em max-min polarization quantities} and we similarly define the {\em min-max polarization quantities}
\begin{equation} \label{min-max}
\mathcal{R}_h(C):=\sup_{x \in \mathbb{S}^{n-1}} U_h(x,C), \ \ \mathcal{R}^{(h)}(N):=\inf_{|C|=N} \mathcal{R}_h(C). 
\end{equation}
For \eqref{min-max} we additionally suppose that $h$ is continuous and finite on $[-1,1]$. 

For a background discussion of polarization, see \cite[Chapter 14]{BHS}, where potentials on
general compact sets are considered. Such optimization quantities arose in the classical context of \emph{Chebyshev problems} where, for example, one minimizes the maximum norm of a polynomial on a compact set by making an optimal choice for the locations of its zeros.  Polarization problems are intimately connected with so-called covering problems where given a compact set $A$ the goal is to choose $N$ points $\omega_N:=\{x_1, x_2, \ldots, x_N\}$  on $A$ so that the largest
distance of a point of $A$ to $\omega_N$ is minimized. We denote this covering radius by $\delta(A,N).$ In particular,
on $\mathbb{S}^{n-1}$, if $h(t)=(2-2t)^{-s/2}$, with $t=x \cdot y$,  then for each fixed $N$
\begin{equation}\label{AsymptoticRiesz}
\lim_{s \to \infty} \mathcal{Q}^{(h)}(N)^{1/s}=1/\delta(\mathbb{S}^{n-1},N).
\end{equation}
We remark that polarization is also regarded as somewhat dual to the notion of discrete energy
in much the same way that optimal covering problems are somewhat dual to best-packing problems.

Since their introduction in the seminal paper of Delsarte, Goethals and Seidel \cite{DGS} in 1977, spherical designs have been front stage in many optimization problems on $\mathbb{S}^{n-1}$. Many well-known spherical configurations that solve famous and difficult problems turn out to be spherical designs of high strength.

The comprehensive survey \cite{Ban09} (see also \cite{Ban17}) explains the role of spherical designs in algebraic combinatorics, approximation theory (thus, with
connections with analysis and statistics), geometry (in particular sphere packing and covering problems) and optimization theory. 
Strong connections of designs with the maximal spherical codes problem
were revealed and exploited by Levenshtein \cite{Lev83,Lev92} (see the chapter \cite{Lev} for more details). 
Cohn and Kumar
\cite{CK} showed that {\em sharp codes}, that is spherical $(2k-1)$-designs with $k$ different inner products within distinct points of the code, enjoy universal optimality; i.e. they attain minimal 
potential energy among all codes of the same cardinality with respect to all \textcolor{black}{absolutely monotone} potentials. This article's authors obtained in \cite{BDHSS} a universal lower bound (ULB) for energy of all codes that is attained by the same sharp codes, in other words, attained if and only if the Levenshtein bound is attained (see
also \cite{BDHSS-DCC19,BDHSSMathComp}). A long-standing conjecture about the existence of asymptotically optimal spherical designs was resolved in \cite{BRV13} (see also \cite{BRV15}). Last, but not least, it was shown that many results related to spherical designs have their counterparts in other 
important metric spaces, such as the infinite projective spaces, Hamming spaces, and Johnson spaces (see \cite{Lev92,Lev,BDHSS-AMP}).  
General references \cite{CS,EZ,BBIT} describe further interesting features of spherical designs. 

Thus, it is not unexpected that the concept of spherical designs is pivotal in our investigation.

\begin{definition} \label{def-designs-1} A spherical $\tau$-design $C
\subset \mathbb{S}^{n-1}$ is a finite subset of $\mathbb{S}^{n-1}$
such that
\[ \frac{1}{\mu(\mathbb{S}^{n-1})} \int_{\mathbb{S}^{n-1}} f(x) d\mu(x) =
                  \frac{1}{|C|} \sum_{x \in C} f(x) \]
($\mu(x)$ is the surface area measure) holds for all polynomials $f(x)
= f(x_1,x_2,\ldots,x_n)$ of degree at most $\tau$ (i.e., the
average of $f$ over the set $C$ is equal to the average of $f$ over
$\mathbb{S}^{n-1}$). 
\end{definition}

With a spherical $\tau$-design $C$ we shall associate the respective polarization quantities
\begin{equation}\label{PolMinDes}
\mathcal{Q}^{(h)}(n,\tau ,N):=\sup \{ \mathcal{Q}_h(C)\, : \, |C|=N, \,C \ {\rm is \ a }\ \tau{\mbox -}{\rm design} \}
\end{equation}
and
\begin{equation}\label{PolMaxDes}
\mathcal{R}^{(h)}(n,\tau ,N):=\inf \{ \mathcal{R}_h(C)\, : \, |C|=N, \, C \ {\rm is\ a }\ \tau{\mbox -}{\rm design} \}.
\end{equation}
%
%

\begin{definition} \label{Tau_{n,N}} Given a dimension $n$ and cardinality $N$, we define $\tau_{n,N}$ to be the largest $\tau$ for which a spherical $\tau$-design of $N$ points on $\mathbb{S}^{n-1}$ exists. 
\end{definition}

Then the definitions \eqref{max-min}, \eqref{min-max}, \eqref{PolMinDes}, and \eqref{PolMaxDes} immediately imply the bounds
\begin{equation}\label{PULB-PUUB}
\mathcal{Q}^{(h)}(N)\geq \mathcal{Q}^{(h)}(n,\tau_{n,N} ,N),\quad \mathcal{R}^{(h)}(N)\leq \mathcal{R}^{(h)}(n,\tau_{n,N} ,N).
\end{equation}

Borodachov \cite{B} has announced that strongly sharp codes, that is $2k$-designs with $k$ distinct inner products of distinct points within the 
code\footnote{Such codes exist only for $k=1$ (in all dimensions) and for $k=2$ (only in special dimensions).}, 
and sharp antipodal codes, that is, sharp codes containing
at least one antipodal pair, are universally optimal for the min-max polarization problem stated in \eqref{min-max}. Inspired by his work, in this article we shall derive universal lower and upper bounds on max-min and min-max polarization quantities for spherical $\tau$-designs, that is, \textcolor{black}{bounds that are valid for the class of potentials having constant sign of their $(\tau+1)$-st derivative, which includes the class of {\em absolutely monotone potentials of high enough order}}\footnote{\textcolor{black}{A potential is called  {\em absolutely monotone of order $m$} if all of its derivatives up to order $m$ are non-negative and just {\em absolutely monotone} when {\bf all} its derivatives are non-negative.}}. Moreover, these bounds consist of a convex combination of point evaluations of the potential function with nodes and weights independent of the potential. In addition, we will show that the $600$-cell satisfies the same {\em min-max polarization universal optimality} as the strong sharp codes and the sharp antipodal codes. In this regard, we note that the known universally optimal spherical configurations \cite{CK} with respect to the energy of absolutely monotone potentials are the sharp codes and the $600$-cell.

This article is structured as follows. In Section \ref{Sec2} we introduce  the needed preliminaries. A Delsarte-Yudin type linear programing lower bound on min-max and max-min polarization for spherical designs is introduced in Section \ref{LowerBound} with the main results  being universal lower bounds given in Theorem \ref{PULB} \textcolor{black} {and Theorem \ref{PULB_Negative}} and conditions for attaining the max-min polarization bound (see Corollary \ref{attaining-cor-1}). Examples \ref{Cube} and \ref{Lower24cell} provide an application of the results in the section and establish that our bound \eqref{PolarizationULB} in Theorem \ref{PULB} and the Fazekash-Levenshtein bound are simultaneously 
attained for the cube on $\mathbb{S}^2$ and the $24$-cell $D_4$ on $\mathbb{S}^3$, respectively. Proposition \ref{Negative_Cross} shows that the cross-polytope attains the bound \eqref{PolarizationULB_Negative} in Theorem \ref{PULB_Negative}. Section \ref{UpperBound}, after
setting a Delsarte-Yudin type upper bound, introduces the notion of  degree $m$ positive definiteness of signed measures and presents the universal upper bound on min-max and max-min polarization for spherical designs in Theorem \ref{PUUB}. 
In Section 5 we establish that the $600$-cell (the unique $11$-design of $120$ points on $\mathbb{S}^3$, see \cite{A2, Boy}) is universally optimal for the min-max polarization problem.  Further applications and illustrations of the theory in Section 6 include an alternative method of solving the max-min polarization property of the cases $N\leq n$ (the optimal configuration being any spherical $1$-design) and $N=n+1$ (the optimal configuration being the regular simplex). We also prove that the cross-polytope has the best max-min polarization constant among all spherical $2$-designs of $N=2n$ points for $n=2,3,4$; for $n\geq 5$, this statement is conditional on a well-known conjecture that the cross-polytope has the best covering radius. In addition, we establish the max-min polarization optimality of the cross-polytope for all so-called centered codes.

\section{Preliminaries}\label{Sec2}

\subsection{Gegenbauer and adjacent polynomials}
We remind the reader that the {\em Gegenbauer polynomials} $P_k^{(n)} (t)=P_k^{(0,0)}(t)$ are Jacobi\footnote{The Jacobi polynomials 
$P_k^{\alpha,\beta}(t)$ are orthogonal on $[-1,1]$ with respect to a weight function $(1-t)^\alpha (1+t)^\beta$.} polynomials $P_k^{\alpha,\beta}(t)$ with 
$\alpha=\beta=(n-3)/2$ normalized so that $P_k^{(n)} (1)=1$. Namely, the Gegenbauer polynomials $P_i^{(n)}(t)$ 
are orthogonal on $[-1,1]$ with respect to the measure 
\[d\mu_n(t):=\gamma_n (1-t^2)^{(n-3)/2}\, dt , \] 
where the normalization constant $\gamma_n$ is chosen so that $\gamma_n \int_{-1}^1 (1-t^2)^{(n-3)/2}\, dt=1$. 

We shall utilize also the {\em adjacent Gegenbauer polynomials} $P_k^{(a,b)} (t)$, $a,b\in\{0,1\}$, namely Jacobi polynomials with 
$\alpha=a+(n-3)/2$ and $\beta=b+(n-3)/2$ similarly normalized by $P_k^{(a,b)} (1)=1$. 

Let 
\begin{equation} \label{geg-exp} 
f(t)=\sum_{i=0}^{\deg(f)} f_i P_i^{(n)}(t) 
\end{equation}
be a real polynomial expanded in terms of the Gegenbauer polynomials $P_i^{(n)}(t)$. The Gegenbauer coefficients $f_i$ are given by
\[f_i := \gamma_n \int_{-1}^1 f(t) P_i^{(n)}(t)(1-t^2)^{(n-3)/2}\, dt, \quad i=0,\dots, \deg(f). \]

\subsection{Another (crucial) definition for spherical designs}
The following equivalent definition of spherical designs facilitates our approach to the polarization problem (see \cite{DGS}, \cite[Equation (1.10)]{FL}). 

\begin{definition} \label{def-des} A code
$C \subset \mathbb{S}^{n-1}$ is a spherical $\tau$-design if and
only if for any point $x \in \mathbb{S}^{n-1}$ and any real
polynomial $f(t)$ of degree at most $\tau$, the equality
\begin{equation}
\label{defin_f}
U_f(x,C)=\sum_{y \in C}f(x \cdot y) = f_0|C|
\end{equation}
holds, where $f_0=\gamma_n \int_{-1}^1 f(t)(1-t^2)^{(n-3)/2}\, dt$ is the constant coefficient in the Gegenbauer expansion  \eqref{geg-exp} of $f$.
\end{definition}

Note that \eqref{defin_f} asserts that the polarization $f$-potential for $C$ is constant on ${\mathbb S}^{n-1}$. This fact serves as the foundation in obtaining lower and upper 
linear programming (LP) bounds for the polarization of spherical designs. 

\subsection{Moments of spherical codes}

 Recall that given a spherical code $C$, its $i$-th moment is defined as
\begin{equation}\label{Moments}
M_i^n(C):=\sum_{x,y\in C} P_i^{(n)} (x\cdot y).
\end{equation}
We remark that one of the alternative definitions for a spherical design of strength  $\tau$ is that all the moments  $M_i^n(C)$ vanish for $i=1,\dots,\tau$.
The following lemma from \cite{BHS} shows the relation between the moments and the spherical harmonics.

\begin{lemma}\label{BHSLemma} (\cite[Lemma 5.2.2]{BHS}) Let $C\subset\mathbb{S}^{n-1}$, $|C|=N$. Let $\mathbb{H}_k^n$ be the subspace of spherical harmonics of degree $k\in \mathbb{N}$ and $Z(n,k)$ denote the dimension of this subspace. Then the following are equivalent:
\begin{itemize}
\item[(a)] The moment $M_k^n(C) =0$.
\item[(b)] For any orthogonal basis  $\{ Y_{kj}(x) \}_{j=1}^{Z(n,k)}$ of \, $\mathbb{H}_k^n$ 
\[\sum_{x\in C} Y_{k,j}(x)=0, \quad 1\leq j\leq Z(n,k).\]
\item[(c)] For any $x\in \mathbb{S}^{n-1}$,
\[ \sum_{y\in C} P_k^{(n)}(x\cdot y)=0.\]
\end{itemize}
\end{lemma}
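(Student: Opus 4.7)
The plan is to prove the equivalence via a three-way cycle (a)$\Leftrightarrow$(b)$\Leftrightarrow$(c), using as the single nontrivial ingredient the \emph{addition formula} for Gegenbauer polynomials, which states that for any orthonormal basis $\{Y_{kj}\}_{j=1}^{Z(n,k)}$ of $\mathbb{H}_k^n$,
\begin{equation*}
P_k^{(n)}(x\cdot y) \;=\; \frac{1}{Z(n,k)}\sum_{j=1}^{Z(n,k)} Y_{kj}(x)\,Y_{kj}(y), \qquad x,y\in \mathbb{S}^{n-1}.
\end{equation*}
Since a rescaling of the basis changes both sides of (b) by the same nonzero factors, I may assume the basis is orthonormal; once the equivalence is proved for one basis, it transfers to every basis because $\{Y_{kj}\}_{j=1}^{Z(n,k)}$ spans $\mathbb{H}_k^n$.

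For (a)$\Leftrightarrow$(b), I would substitute the addition formula into the definition \eqref{Moments} of $M_k^n(C)$ and separate the double sum:
\begin{equation*}
M_k^n(C)=\sum_{x,y\in C}P_k^{(n)}(x\cdot y)=\frac{1}{Z(n,k)}\sum_{j=1}^{Z(n,k)}\Bigl(\sum_{x\in C}Y_{kj}(x)\Bigr)^{\!2}.
\end{equation*}
This displays $M_k^n(C)$ as a sum of squares of real numbers (the $Y_{kj}$ are real-valued), so $M_k^n(C)=0$ forces each square, and hence each inner sum, to vanish; the converse is obvious. This settles (a)$\Leftrightarrow$(b).

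For (b)$\Leftrightarrow$(c), I would similarly evaluate
\begin{equation*}
\sum_{y\in C}P_k^{(n)}(x\cdot y)=\frac{1}{Z(n,k)}\sum_{j=1}^{Z(n,k)}\Bigl(\sum_{y\in C}Y_{kj}(y)\Bigr)Y_{kj}(x).
\end{equation*}
If (b) holds, every inner coefficient is zero and (c) follows. Conversely, if (c) holds, the right-hand side vanishes identically as a function of $x\in\mathbb{S}^{n-1}$; since the $Y_{kj}$ are linearly independent elements of $\mathbb{H}_k^n$, all coefficients $\sum_{y\in C}Y_{kj}(y)$ must vanish, yielding (b).

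The only place where something beyond bookkeeping enters is invoking the addition formula, so there is no serious obstacle; the main care needed is to write the argument so that it is manifestly independent of the particular choice of orthogonal basis (as the statement requires) and to ensure that the real-valued version of the addition formula is used so that the passage from $M_k^n(C)$ to a sum of squares is valid.
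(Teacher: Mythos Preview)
Your proof is correct and is exactly the standard argument via the addition formula. Note, however, that the paper does not actually supply a proof of this lemma: it is quoted verbatim from \cite[Lemma~5.2.2]{BHS} and used as a black box, so there is nothing in the paper to compare your argument against beyond confirming that your approach is the expected one.
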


Lemma \ref{BHSLemma} allows us to extend Definition \ref{def-des}. Let $C \subset \mathbb{S}^{n-1}$ be a spherical code and $I(C) \subset \mathbb{N}$ be the collection of indexes for which $ M_i^n(C)=0$ (also referred to as the {\em index set associated with the code $C$}).
\begin{proposition} \label{prop23}
Let $C \subset \mathbb{S}^{n-1}$ be a spherical code with $I(C)\not= \emptyset$. If the polynomial $f$ has $f_i=0$ 
for all $i \in \mathbb{N} \setminus I(C)$, then \eqref{defin_f} holds true.  
\end{proposition}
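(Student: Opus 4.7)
The plan is direct: expand $f$ in its Gegenbauer basis, use the hypothesis to discard all nonconstant terms with index outside $I(C)$, and then invoke Lemma \ref{BHSLemma} to kill the remaining nonconstant terms pointwise in $x$.

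More specifically, starting from
\[
f(t)=\sum_{i=0}^{\deg(f)} f_i\, P_i^{(n)}(t),
\]
the hypothesis $f_i=0$ for all $i\in\mathbb{N}\setminus I(C)$ (with $\mathbb{N}$ denoting the positive integers) reduces the expansion to
\[
f(t)=f_0+\sum_{i\in I(C)\cap\{1,\dots,\deg(f)\}} f_i\, P_i^{(n)}(t).
\]
Substituting $t=x\cdot y$, summing over $y\in C$, and using $P_0^{(n)}\equiv 1$, I would then write
\[
U_f(x,C)=\sum_{y\in C} f(x\cdot y)=f_0|C|+\sum_{i\in I(C)\cap\{1,\dots,\deg(f)\}} f_i \sum_{y\in C} P_i^{(n)}(x\cdot y).
\]

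The last ingredient is the pointwise identity supplied by Lemma \ref{BHSLemma}: for every $i\in I(C)$ we have $M_i^n(C)=0$, and the equivalence (a)$\Leftrightarrow$(c) in that lemma gives $\sum_{y\in C} P_i^{(n)}(x\cdot y)=0$ for all $x\in\mathbb{S}^{n-1}$. Plugging this into the display above collapses the sum to $f_0|C|$, which is exactly \eqref{defin_f}.

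There is essentially no obstacle, as the proposition is a direct reading off of Lemma \ref{BHSLemma}; the only subtlety worth remarking on is the role of the constant term. Since $P_0^{(n)}\equiv 1$, the $i=0$ contribution is inevitably $f_0|C|$, which is precisely the value asserted by \eqref{defin_f}; so the hypothesis is correctly formulated to exclude $0$ from the set where $f_i$ must vanish, and it needs no restriction at $i=0$. This also shows the proposition genuinely extends Definition \ref{def-des}: Definition \ref{def-des} is the special case $I(C)\supseteq\{1,\dots,\tau\}$ where the hypothesis on $f$ is automatic for any polynomial of degree at most $\tau$.
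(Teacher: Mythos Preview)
Your proof is correct and follows precisely the approach the paper intends: the paper does not give an explicit proof of Proposition~\ref{prop23}, presenting it instead as an immediate consequence of Lemma~\ref{BHSLemma}, and your expansion-and-substitution argument using the equivalence (a)$\Leftrightarrow$(c) is exactly that consequence spelled out.
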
 
We will use this property in Section 5 when we consider the 600-cell. 

\subsection{Delsarte-Goethals-Seidel bound}
The cardinality of spherical $\tau$-designs is bounded from below by the Fisher-type bound 
(see Delsarte, Goethals, and Seidel \cite[Theorems 5.11, 5.12]{DGS}) as follows. If $C\subset\mathbb{S}^{n-1}$ is a $\tau$-design, 
$\tau=2k-1+\epsilon$, $k \in \mathbb{N}$, $\epsilon \in \{0,1\}$, then 
\begin{equation}
\label{DGS-bound}
|C| \geq D(n,\tau):={n+k-2+\epsilon \choose n-1}+{n+k-2 \choose n-1}.
\nonumber
\end{equation}
The first few bounds are $D(n,1)=2$ for 1-designs, $D(n,2)=n+1$ for $2$-designs, $D(n,3)=2n$ for 3-designs 
(all these are attained -- by a pair of antipodal points; a regular $n$-dimensional simplex; and a cross-polytope in $n$-dimensions, respectively), $D(n,4)=n(n+3)/2$ for $4$-designs, and $D(n,5)=n(n+1)$ for $5-$designs.  

Seymour and Zaslavsky \cite{SZ} showed that for fixed $n$ and $\tau$, there exist spherical designs of all large enough cardinalities. 
In the asymptotic process when the dimension $n$ is fixed and the strength $\tau$ tends to infinity, 
Bondarenko, Radchenko and Viazovska \cite{BRV13,BRV15} proved the existence of spherical $\tau$-designs with cardinalities $N \geq C_n \tau^{n-1}$, where $C_n$ depends only on the dimension $n$, thus giving the optimal order of the minimal
designs, as mentioned above.  

\subsection{Covering radius and Fazekas-Levenshtein bound}

For a spherical code $C\subset \mathbb{S}^{n-1}$ define the {\em covering radius of $C$} as 
$$\rho(C):=\max_{x\in \mathbb{S}^{n-1}} \min_{y\in C}\| x-y\|.$$
The {\em best covering radius for spherical codes} of cardinality $N$ is defined as (see text above \eqref{AsymptoticRiesz})
$$\rho(N):=\delta(\mathbb{S}^{n-1},N)=\min_C \{ \rho(C)\, : \, |C|=N \}  ,$$  
and for {\em spherical  $\tau$-designs} of cardinality $N$ (when they exist) as 
$$\rho(\tau,N):=\min_C \{ \rho(C)\, :\, C {\rm \ is\ a\ } \tau\mbox{-design},  |C|=N\}.$$  

For our purposes it is beneficial to re-phrase these quantities in terms of inner products. Namely, given a code $C \subset \mathbb{S}^{n-1}$ 
of cardinality $N$ and a point $x$ on $\mathbb{S}^{n-1}$, denote the set of inner products between $x$ and the points of the code with $T(x,C)$ and define
\begin{equation}\label{CoveringRadius}
s_C:=\min_x \max T(x,C)=1-\frac{\rho(C)^2}{2}, \quad s_N:=\max_{|C|=N} s_C=1-\frac{\rho(N)^2}{2}.\end{equation}
For the case of spherical $\tau$-designs $C$ of cardinality $|C|=N$ we shall use the notations
\begin{equation}\label{CoveringRadiusDes}
\ell_{\tau,N}:=\min_C \{s_C  : C {\rm \ is \ } \tau\mbox{-design} \}, \quad s_{\tau,N}:=\max_{C} \{s_C : C {\rm \ is \ } \tau\mbox{-design}\}
\end{equation}
(note that $s_{\tau,N}=1-\rho(\tau,N)^2/2$). 

Linear programming bounds for the covering radius of spherical designs were obtained by Fazekas and
Levenshtein \cite[Theorem 2]{FL} in the setting of polynomial metric spaces. The Fazekas-Levenshtein bound 
states that if $C$ is a $\tau$-design, $\tau=2k-1+\epsilon$, $\epsilon \in \{0,1\}$, then $s_C \geq t_k^{0,\epsilon}$, i.e.
\begin{equation}
\label{FL_bound}
\ell_{\tau,N} \geq t_k^{0,\epsilon},
\end{equation}
where $t_k^{0,\epsilon}$ is the largest zero of the (possibly adjacent) polynomial $P^{(0,\epsilon)}(t)$. 
The first few bounds are $0$ for 1-designs ($N \geq 2$), $1/n$ for $2$-designs ($N \geq n+1$), $1/\sqrt{n}$ for $3$-designs
($N \geq 2n$), $\frac{1+\sqrt{n+3}}{n+2}$ for $4$-designs ($N \geq n(n+3)/2$), and $\sqrt{3/(n+2)}$ for $5$-designs ($N \geq n(n+1)$).

Note that the right hand side of the bound \eqref{FL_bound} does not depend on the cardinalities of the designs under consideration. 
Lower and upper linear programming bounds on the covering radius of spherical $\tau$-designs that take into account the cardinalities were derived in \cite{BS21}. 

The Fazekas-Levenshtein bound will be derived as a consequence of our Theorem \ref{PULB} below. Also, we show that the 
cube in three dimensions and the $24$-cell in four dimensions attain the bound \eqref{FL_bound} for $(\tau,N)=(3,8)$ and $(5,24)$, respectively,
two examples that seem to be unnoticed so far (see Examples \ref{Cube} and  \ref{Lower24cell}).

\section{Universal lower bounds for polarization}\label{LowerBound}

\begin{definition} \label{LowerClass} 
Let $\tau$ be a positive integer and $h$ be a potential function. Denote by $\mathcal{L}(n,\tau,h)$ 
the class of {\em lower admissible polynomials} $f(t)$ such that 
\begin{itemize}
\item[(A1)] $\deg(f) \leq \tau$;
\item[(A2)] $f(t) \leq h(t)$ for every $t \in [-1,1]$.
\end{itemize}
\end{definition}

Utilizing this definition  and \eqref{defin_f} we derive the following Delsarte-Yudin type lower bound on the polarization potential of $h(t)$.

\begin{proposition} \label{prop32}
Let $h(t)$ be a potential function, $\tau$ be a natural number, and $f \in {\mathcal L}(n,\tau, h)$ be a lower admissible polynomial. Then for all spherical $\tau$-designs $C \subset \mathbb{S}^{n-1}$ the following lower bound holds:
\begin{equation}\label{lower-bound}
U_h(x,C) \geq  U_f(x,C) = f_0|C|, \quad x\in \mathbb{S}^{n-1}.
\end{equation}
Consequently,
\begin{equation}\label{DY_LB}
\mathcal{Q}_h(C) \geq \max_{f\in {\mathcal L}(n,\tau,h)} \{ f_0|C| \}. 
\end{equation}
\end{proposition}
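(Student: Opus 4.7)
The plan is to chain together the two ingredients appearing in the hypotheses: the pointwise majorization of $f$ by $h$ and the defining ``quadrature'' property of a spherical $\tau$-design. Both steps are essentially one line each, so the main content of the proof is organizational rather than technical.

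First, I would prove the inequality in \eqref{lower-bound}. Fix an arbitrary $x \in \mathbb{S}^{n-1}$. Every inner product $x \cdot y$ with $y \in C$ lies in $[-1,1]$, so condition (A2) gives $h(x \cdot y) \geq f(x \cdot y)$ for each $y \in C$. Summing over $y \in C$ yields $U_h(x,C) \geq U_f(x,C)$ immediately from the definition of the discrete $h$-potential.

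Next, I would evaluate $U_f(x,C)$. Since $f$ is a polynomial with $\deg(f) \leq \tau$ by (A1), and $C$ is a spherical $\tau$-design, the equivalent characterization in Definition \ref{def-des} (equation \eqref{defin_f}) applies and gives
\[
U_f(x,C) = f_0 |C|,
\]
where $f_0$ is the constant Gegenbauer coefficient from expansion \eqref{geg-exp}. In particular, the right-hand side is independent of $x$, which is exactly what one needs to turn a pointwise estimate into a bound on $\mathcal{Q}_h(C)$.

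Finally, to obtain \eqref{DY_LB}, I would take the infimum over $x \in \mathbb{S}^{n-1}$ on both sides of \eqref{lower-bound}. Since $U_h(x,C) \geq f_0|C|$ uniformly in $x$, we get $\mathcal{Q}_h(C) = \inf_x U_h(x,C) \geq f_0|C|$. As this holds for every $f \in \mathcal{L}(n,\tau,h)$, we may take the supremum over such $f$ to conclude $\mathcal{Q}_h(C) \geq \sup_{f \in \mathcal{L}(n,\tau,h)} f_0|C|$, and the supremum is written as a maximum in the statement (this is attained or one simply interprets it as a supremum). There is no genuine obstacle here; the only thing worth noting is that the passage from a pointwise lower bound to a bound on $\mathcal{Q}_h(C)$ is legitimate precisely because the $\tau$-design property collapses $U_f(x,C)$ to a constant, which is the whole reason Definition \ref{def-des} is the ``crucial'' reformulation used throughout the paper.
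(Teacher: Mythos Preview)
Your proof is correct and follows exactly the paper's approach: the paper's own proof is the single line ``This is immediate from Definition \ref{def-des},'' and you have simply unpacked that immediacy by applying (A2) for the inequality and \eqref{defin_f} for the equality, then taking the infimum over $x$ and the supremum over $f$.
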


\begin{proof}
This is immediate from Definition \ref{def-des}.
\end{proof}

The following corollary is a trivial consequence of \eqref{DY_LB}.

\begin{corollary} Suppose that the collection of spherical $\tau$-designs $C \subset \mathbb{S}^{n-1}$ of cardinality $|C|=N$ is non-empty. Then
\begin{equation} \label{Polarization_LB1}\mathcal{Q}^{(h)}(n,\tau,N) \geq \max_{f\in {\mathcal L}(n,\tau,h)} \{ f_0N \}.
\nonumber
\end{equation}
\end{corollary}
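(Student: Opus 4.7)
The plan is essentially to unwind definitions, since the authors themselves flag this as a trivial consequence of \eqref{DY_LB}. The whole point of the corollary is to pass from a bound that holds for each individual spherical $\tau$-design to a bound on the design-restricted polarization quantity $\mathcal{Q}^{(h)}(n,\tau,N)$ defined in \eqref{PolMinDes}.

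First, I would fix an arbitrary spherical $\tau$-design $C\subset\mathbb{S}^{n-1}$ with $|C|=N$; by hypothesis at least one such $C$ exists, so the supremum in \eqref{PolMinDes} is taken over a non-empty set. Proposition \ref{prop32} applies to $C$ and gives
\begin{equation*}
\mathcal{Q}_h(C)\;\geq\;\max_{f\in\mathcal{L}(n,\tau,h)}\{f_0|C|\}\;=\;\max_{f\in\mathcal{L}(n,\tau,h)}\{f_0 N\},
\end{equation*}
where the right-hand side does not depend on the particular choice of $C$ (only on $n$, $\tau$, $h$, and $N$).

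Next I would take the supremum of the left-hand side over all spherical $\tau$-designs of cardinality $N$. By the definition \eqref{PolMinDes},
\begin{equation*}
\mathcal{Q}^{(h)}(n,\tau,N)\;=\;\sup\{\mathcal{Q}_h(C):|C|=N,\,C\text{ is a }\tau\text{-design}\}\;\geq\;\max_{f\in\mathcal{L}(n,\tau,h)}\{f_0 N\},
\end{equation*}
which is the desired inequality.

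There is no real obstacle here: the argument is a one-line invocation of Proposition \ref{prop32} followed by taking a supremum. The only thing worth noting is that the maximum on the right is independent of $C$, which is precisely what allows it to survive the passage to the supremum; and that the non-emptiness hypothesis is needed only to ensure that $\mathcal{Q}^{(h)}(n,\tau,N)$ is not $-\infty$ by the usual convention on the supremum of the empty set.
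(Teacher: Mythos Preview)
Your proposal is correct and is exactly what the paper intends: the authors state only that the corollary is a trivial consequence of \eqref{DY_LB}, and your argument---apply Proposition \ref{prop32} to any $\tau$-design of cardinality $N$ and take the supremum over all such designs---is precisely that one-line deduction.
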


This gives rise to the following linear program:
\begin{align}\label{LP_ULB}
{\rm given}: & \quad n, \ \tau, \ N, \ h \nonumber \\
{\rm maximize}: & \quad f_0  \\
{\rm subject \ to:} &\quad  f \in {\mathcal L}(n,\tau,h) \nonumber
\end{align}

Theorems \ref{PULB} and \ref{PULB_Negative} below are two of our first main results that solve the linear program \eqref{LP_ULB} and establishe a universal lower bound on the max-min and the min-max polarization quantities of spherical designs, which we refer to as a polarization ULB or simply PULB.

\begin{theorem}\label{PULB}  Assume there exists a spherical $\tau$-design of cardinality $N$ on $\mathbb{S}^{n-1}$, where $\tau=:2k-1+\epsilon$, $\epsilon\in \{0,1\}$, and that the potential $h$ is continuous on $[-1,1]$, finite on $(-1,1)$ and that $h^{(\tau+1)}(t)\geq 0$ on $(-1,1)$. Then the following {\em max-min and min-max Polarization ULB} hold:
\begin{equation}\label{PolarizationULB}
\mathcal{Q}^{(h)}(n,\tau,N) \geq N \sum_{i=1-\epsilon}^{k} \rho_i h(\alpha_i), 
\end{equation}
and 
\begin{equation}\label{PolarizationULB_R}
\mathcal{R}^{(h)}(n,\tau,N) \geq N \sum_{i=1-\epsilon}^{k} \rho_i h(\alpha_i), 
\end{equation}
where $\{\alpha_i\}_{i=1-\epsilon}^k$ are the roots of the (possibly adjacent) Gegenbauer polynomial $(1+t)^\epsilon P_k^{(0,\epsilon)}(t)$ and the corresponding weights $\{ \rho_i \}_{i=1-\epsilon}^k$ are positive with sum equal to $1$, and are given by 
\begin{equation} \label{RhoWeights}
\rho_i=\gamma_n\int_{-1}^1 \ell_i (t) (1-t^2)^{(n-3)/2}\, dt=\gamma_n\int_{-1}^1 \ell_i^2 (t) (1-t^2)^{(n-3)/2}\, dt,
\end{equation} 
where $\ell_i(t)$ are the Lagrange basic polynomials associated with the nodes $\{ \alpha_i \}_{i=1-\epsilon}^k$. Moreover, the bound \eqref{PolarizationULB} 
is the best that can be attained by polynomials from the set ${\mathcal L}(n,\tau,h)$.
\end{theorem}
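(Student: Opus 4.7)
The plan is to construct, for each $h$ satisfying the hypothesis, an explicit admissible polynomial $f\in\mathcal{L}(n,\tau,h)$ whose constant Gegenbauer coefficient $f_0$ equals the right-hand side of \eqref{PolarizationULB}. Invoking Proposition \ref{prop32} then yields both \eqref{PolarizationULB} and \eqref{PolarizationULB_R} simultaneously, since $U_f(x,C)=f_0|C|$ holds pointwise on $\mathbb{S}^{n-1}$ for every $\tau$-design $C$, and thus bounds $U_h(x,C)$ from below at every $x$; in particular, it bounds the infimum and the supremum at once. The natural choice for $f$ is the Hermite interpolant of $h$ on the node set $\{\alpha_i\}_{i=1-\epsilon}^k$, where the interior nodes (roots of $P_k^{(0,\epsilon)}$) are taken with multiplicity two and the endpoint $\alpha_0=-1$ (present only when $\epsilon=1$) is taken with multiplicity one. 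A quick count shows this interpolation problem imposes $2k+\epsilon=\tau+1$ conditions, so $f$ is unique of degree at most $\tau$, satisfying (A1).

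For condition (A2), I would invoke the standard Hermite remainder formula: for each $t\in[-1,1]$ there exists $\xi_t\in(-1,1)$ with
\[ h(t)-f(t) \;=\; \frac{h^{(\tau+1)}(\xi_t)}{(\tau+1)!}\,(t+1)^{\epsilon}\prod_{i=1}^{k}(t-\alpha_i)^2. \]
Since $h^{(\tau+1)}\ge 0$ on $(-1,1)$ by hypothesis and the right-hand factor is manifestly non-negative on $[-1,1]$, we conclude $f(t)\le h(t)$, so $f\in\mathcal{L}(n,\tau,h)$. Continuity of $h$ at $\pm 1$ takes care of the boundary points via a limiting argument.

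To identify $f_0$ with $\sum_{i=1-\epsilon}^k \rho_i h(\alpha_i)$, I would use the appropriate Gauss-type quadrature. For $\epsilon=0$ the $k$ interior nodes are exactly the Gauss--Jacobi (Gegenbauer) nodes, giving a quadrature rule exact on polynomials of degree $\le 2k-1=\tau$; for $\epsilon=1$, the added endpoint at $-1$ produces the Gauss--Jacobi--Lobatto-type rule exact on degree $\le 2k=\tau$. Applied to $f$ (which has degree $\le\tau$), the quadrature is exact and evaluates $f$ at the interpolation nodes, where $f(\alpha_i)=h(\alpha_i)$; hence $f_0=\sum_i\rho_i h(\alpha_i)$. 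The positive weights $\rho_i$ sum to $1$ because the rule is exact on the constant $1$ and the measure $d\mu_n$ is a probability measure. The two representations in \eqref{RhoWeights} coincide because $\ell_i-\ell_i^2$ has degree at most $2k-2+2\epsilon\le\tau$ and vanishes at every node (since $\ell_i(\alpha_j)=\delta_{ij}$); applying the same exact quadrature to $\ell_i-\ell_i^2$ gives $\gamma_n\int(\ell_i-\ell_i^2)\,d\mu_n=0$.

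Finally, for the optimality statement, I would take an arbitrary $g\in\mathcal{L}(n,\tau,h)$. Since $\deg g\le\tau$, the quadrature is exact on $g$, yielding $g_0=\sum_i\rho_i g(\alpha_i)$. The admissibility bound $g(\alpha_i)\le h(\alpha_i)$ and the positivity of $\rho_i$ then force $g_0\le\sum_i\rho_i h(\alpha_i)=f_0$, so our $f$ maximizes the linear program \eqref{LP_ULB}. The main subtlety is keeping the two parities $\epsilon\in\{0,1\}$ aligned throughout (node count, interpolation multiplicities, degree of $\ell_i^2$, and quadrature exactness), but no step requires genuinely new ideas beyond the interplay between Hermite interpolation and Gauss-type quadrature.
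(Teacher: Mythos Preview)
Your proposal is correct and follows essentially the same route as the paper: construct the Hermite interpolant on the Gauss (or Gauss--Radau when $\epsilon=1$) nodes, use the remainder formula to verify $f\le h$, apply the exact quadrature to compute $f_0$, and use the same quadrature on an arbitrary $g\in\mathcal{L}(n,\tau,h)$ for optimality. Your explicit justification that $\int\ell_i\,d\mu_n=\int\ell_i^2\,d\mu_n$ (via the quadrature applied to $\ell_i-\ell_i^2$) is a nice detail the paper leaves implicit.
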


\begin{proof} The bound \eqref{PolarizationULB_R} is an easy consequence of \eqref{lower-bound} , so we shall focus on \eqref{PolarizationULB}.

We first consider the case when $\epsilon=0$; i.e., $\tau=2k-1$ is odd. Recall the Gauss-Jacobi quadrature with Jacobi parameters $\alpha=\beta=(n-3)/2$. Namely, given a positive integer $k$ if we select as nodes $\alpha_i$, $i=1,\dots,k$, to be the $k$ distinct roots of $P_k^{(n)}(t)$ in $(-1,1)$, we can find 
positive weights $\rho_i$, $i=1,\dots,k$, such that the quadrature 
\begin{equation} \label{GJQ}
\gamma_n \int_{-1}^1 f(t) (1-t^2)^{\frac{n-3}{2}} \, dt=\sum_{i=1}^k \rho_i f(\alpha_i) 
\end{equation}
is exact for all polynomials of degree at most $2k-1$. Substituting with the Lagrange basic polynomials $\ell_i (t)$, such that $\ell_i(\alpha_j)=\delta_{ij}$, where $\delta_{ij}$ is the Kronecker  delta symbol, we obtain the quadrature weights on the right side of \eqref{PolarizationULB}. By a standard argument the positivity of the $\rho_i$ can be seen on substituting $f(t)=\ell_i^2(t)$ in \eqref{GJQ}, and on setting $f(t)=1$ we derive that the sum of the weights equals $1$. Hence, the bound is a convex combination of point evaluations at the quadrature nodes, which do not depend on the underlying potential function. 

We next define the Hermite interpolation polynomial $H_{2k-1}(t;h)$ such that $ H_{2k-1}(\alpha_i;h)=h(\alpha_i)$ and $H_{2k-1}^\prime(\alpha_i;h)=h^\prime (\alpha_i)$, $i=1,\dots,k$. It follows that $\deg(H_{2k-1}(t;h)) \leq 2k-1=\tau$. 
Utilizing the error form of the Hermite interpolation formula (see \cite[Lemma 2.1]{CK}) and the fact that $h$ has non-negative derivative of order $2k$ we conclude that for any $t\in (-1,1)$, there exists a $\xi\in (-1,1)$ such that
\begin{equation}\label{HermiteError}
h(t)-  H_{2k-1}(t;h) = \frac{h^{(2k)}(\xi)}{(2k)!}(t-\alpha_1)^2 \dots (t-\alpha_k)^2\geq 0.\end{equation}
Therefore, $H_{2k-1}\in {\mathcal L}(n,\tau, h)$ and hence,
\begin{eqnarray*}
{\mathcal Q}^{(h)}(n,\tau,N) &\geq& N\gamma_n \int_{-1}^1 H_{2k-1}(t;h) (1-t^2)^{(n-3/2}\, dt \\
                           &=& N \sum_{i=1}^k \rho_i H_{2k-1}(\alpha_i;h)=N\sum_{i=1}^k \rho_i h(\alpha_i).
\end{eqnarray*}

To see that the bound is optimal among polynomials in ${\mathcal L}(n,\tau,h)$, suppose that $f$ is an arbitrary polynomial in the class. We then have 
\[ Nf_0=N\gamma_n \int_{-1}^1 f(t) (1-t^2)^{(n-3/2}\, dt=N \sum_{i=1}^k \rho_i f(\alpha_i)\leq N \sum_{i=1}^k \rho_i h(\alpha_i).\]
For equality to hold, we need $f(\alpha_i)=h(\alpha_i)$ for $i=1,\dots,k$, which along with $f(t)\leq h(t)$ implies that $f^\prime (\alpha_i)=h^\prime (\alpha_i )$. Since the Hermite interpolation polynomial  $H_{2k-1}(t;h)$ is unique, this is the only polynomial attaining the bound \eqref{PolarizationULB}. 

When $\epsilon=1$, that is, $\tau=2k$ is even, the proof above is modified as follows. We take the nodes $\{\alpha_i \}_{i=1}^k$ to be the roots of $P_k^{(0,1)}(t)$. Setting $\alpha_0=-1$, we define the Lagrange basic polynomials $\ell_i (t)$, $i=0,1,\dots, k$, and let $\rho_i:=\gamma_n \int_{-1}^1 \ell_i (t)(1-t^2)^{(n-3)/2}\, dt$, $i=0,\dots,k$. With this definition the quadrature rule
\begin{equation} \label{QR_epsilon}
\gamma_n \int_{-1}^1 f(t) (1-t^2)^{\frac{n-3}{2}} \, dt=\sum_{i=0}^k \rho_i f(\alpha_i) 
\end{equation}
is exact for all polynomials up to degree $k$. This quadrature rule is actually exact for all polynomials of degree at most $2k$. Indeed, any polynomial $f(t)$ of degree at most $2k$ can be presented as 
$$f(t)=(1+t)P_k^{(0,1)}(t)q(t)+p(t),$$ 
for some polynomials $q(t)$ and $p(t)$ of degree at most $k-1$ and $k$ respectively. Then we use the orthogonality of $P_k^{(0,1)}$ and $q$ to obtain
\begin{eqnarray*}
\gamma_n \int_{-1}^1 f(t) (1-t^2)^{\frac{n-3}{2}} \, dt &=& \gamma_n \int_{-1}^1  \left[ (1+t)P_k^{(0,1)}(t)q(t)+p(t)\right] (1-t^2)^{\frac{n-3}{2}} \, dt \\
&=& \gamma_n \int_{-1}^1  p(t)(1-t^2)^{\frac{n-3}{2}} \, dt ,
\end{eqnarray*}
while
\begin{eqnarray*}
\sum_{i=0}^k \rho_i f(\alpha_i) &=& \sum_{i=0}^k \rho_i \left[ (1+\alpha_i) P_k^{(0,1)}(\alpha_i) q(\alpha_i)+p(\alpha_i) \right] \\
&=&  \sum_{i=0}^k \rho_i p(\alpha_i) = \gamma_n \int_{-1}^1  p(t)(1-t^2)^{\frac{n-3}{2}} \, dt ,
\end{eqnarray*}
where we used that \eqref{QR_epsilon} holds for $p$. This completes the proof that quadrature \eqref{QR_epsilon} is valid for polynomials of degree up to $2k$. 

Let $H_{2k}(t;h)$ be  the polynomial of degree at most $2k$ that interpolates $h(t)$ at $\{ \alpha_i \}_{i=0}^k$ and its derivative $h^\prime (t)$ at the interior nodes $\{\alpha_i \}_{i=1}^k$. The Hermite error formula takes the form
\begin{equation}\label{HermiteError1} 
h(t)-  H_{2k}(t;h) = \frac{h^{(2k+1)}(\xi)}{(2k+1)!}(t+1)(t-\alpha_1)^2 \dots (t-\alpha_k)^2\geq 0,
\end{equation}
so we conclude that $H_{2k}(t)\in {\mathcal L}(n,\tau,h)$. The remaining part of  the proof of \eqref{PolarizationULB} is the same as for odd $\tau$.
\end{proof}

As a byproduct of Theorem \ref{PULB} and the asymptotic formula \eqref{AsymptoticRiesz}, we obtain a simple proof of the Fazekas-Levenshtein bound \eqref{FL_bound}.
\begin{corollary}
Let there be a spherical $\tau$-design $C$, $|C|=N$, where $\tau=2k-1+\epsilon$, $\epsilon\in \{0,1\}$. Then $\ell_{\tau,N}\geq \alpha_k=t_k^{0,\epsilon}$.
\end{corollary}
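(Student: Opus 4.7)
The plan is to specialize Theorem \ref{PULB} to the Riesz family $h_s(t) := (2-2t)^{-s/2} = \|x-y\|^{-s}$ (where $t = x\cdot y$) and pass to the limit $s\to\infty$. First I would verify that $h_s$ meets the hypothesis of Theorem \ref{PULB}: a direct computation gives
\[
h_s^{(j)}(t) = s(s+2)\cdots(s+2(j-1))\,(2-2t)^{-s/2-j},
\]
which is strictly positive on $(-1,1)$ for every $j\geq 1$ and every $s>0$, so in particular $h_s^{(\tau+1)}\geq 0$ there. Consequently, for any spherical $\tau$-design $C$ with $|C|=N$, Theorem \ref{PULB} (or the pointwise version \eqref{lower-bound}) yields
\[
\mathcal{Q}_{h_s}(C) \geq N\sum_{i=1-\epsilon}^{k}\rho_i(2-2\alpha_i)^{-s/2}.
\]

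Next I would take $s$-th roots on both sides and let $s\to\infty$. Since $\alpha_k$ is the largest node, $(2-2\alpha_k)^{-1/2}$ is the largest of the values $(2-2\alpha_i)^{-1/2}$, so a routine $L^s$-sandwich produces
\[
\liminf_{s\to\infty}\mathcal{Q}_{h_s}(C)^{1/s} \geq (2-2\alpha_k)^{-1/2}.
\]
Independently, the elementary bounds $\max_{y\in C}\|x-y\|^{-s}\leq U_{h_s}(x,C)\leq N\max_{y\in C}\|x-y\|^{-s}$ (exactly the argument behind \eqref{AsymptoticRiesz} applied to the single fixed code $C$), combined with the infimum in $x$, give
\[
\mathcal{Q}_{h_s}(C)^{1/s}\longrightarrow 1/\rho(C) \quad\text{as } s\to\infty.
\]

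Combining the two asymptotics yields $1/\rho(C)\geq (2-2\alpha_k)^{-1/2}$, equivalently $\rho(C)^2\leq 2-2\alpha_k$, which translates into $s_C = 1-\rho(C)^2/2\geq \alpha_k$. Because this holds for every spherical $\tau$-design of cardinality $N$, taking the minimum over such designs produces $\ell_{\tau,N}\geq \alpha_k = t_k^{0,\epsilon}$, which is precisely \eqref{FL_bound}. I do not anticipate any substantive obstacle: once Theorem \ref{PULB} is in place, the argument is essentially mechanical and reduces to two instances of the familiar fact that $L^s$-averages concentrate on their maximum as $s\to\infty$.
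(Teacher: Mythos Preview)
Your proof is correct and follows essentially the same route as the paper's own proof: both specialize Theorem \ref{PULB} to the Riesz family $h_s(t)=(2-2t)^{-s/2}$, exploit that $h_s^{(\tau+1)}>0$, and compare the two sides after taking $s$-th roots and letting $s\to\infty$. The only cosmetic difference is that the paper packages the asymptotic step as a short contradiction argument (assuming $s_C<\alpha_k$ and sandwiching $\sum_{y\in C}h_m(x\cdot y)$ directly), whereas you phrase it as a direct proof via $\mathcal{Q}_{h_s}(C)^{1/s}\to 1/\rho(C)$; the substance is identical.
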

\begin{proof} 
Suppose to the contrary that $\ell_{\tau_{n,N},N}< \alpha_k$. Then there exists a spherical $\tau$-design $C$ of cardinality $|C|=N$ such that $s_C<\alpha_k$ and 
\[ \sum_{y\in C}h(x\cdot y)\geq N\sum_{i=1-\epsilon}^{k} \rho_i h(\alpha_i)\]
for all absolutely monotone potentials and, in particular, for the Riesz potentials $h_m (t)=(2-2t)^{-m/2}$ (as their derivative of order $(\tau_{n,N}+1)$ is non-negative). The monotonicity of $h_m (t)$ yields
\[ \frac{N}{(2-2s_C)^{m/2}} \geq \sum_{y\in C}h_m (x\cdot y) \geq N\sum_{i=1-\epsilon}^{k} \rho_i h_m(\alpha_i) \geq \frac{N\rho_k}{(2-2\alpha_k)^{m/2}}.\]
Taking an $m$-th root and letting $m\to \infty$ we derive a contradiction.
\end{proof}

\begin{remark} We note that the assumption $h^{(\tau+1)}(t)\geq 0$ on the potentials is significantly weaker than the standard assumption of absolutely monotone potentials when dealing with universal results for the minimal energy problem. Moreover, we are able to modify our analysis to determine analogous PULBs for spherical $\tau$-designs with $h^{(\tau+1)}(t)\leq 0$ as the following theorem shows.
\end{remark}

\begin{theorem}\label{PULB_Negative}  \textcolor{black}{
Assume there exists a spherical $\tau$-design of cardinality $N$ on $\mathbb{S}^{n-1}$, where $\tau=:2k-1+\epsilon$, $\epsilon\in \{0,1\}$, and that the potential $h$ is continuous on $[-1,1]$, finite on $(-1,1)$ and that $h^{(\tau+1)}(t)\leq 0$ on $(-1,1)$.  Then the following {\em max-min and min-max Polarization ULB} hold:
\begin{equation}\label{PolarizationULB_Negative}
\mathcal{Q}^{(h)}(n,\tau,N) \geq N\cdot \sum_{i=\epsilon}^{k+\epsilon} \rho_i h(\alpha_i), 
\end{equation}
and 
\begin{equation}\label{PolarizationULB_R_Negative}
\mathcal{R}^{(h)}(n,\tau,N) \geq N\cdot \sum_{i=\epsilon}^{k+\epsilon} \rho_i h(\alpha_i),
\end{equation}
where when $\epsilon=0$ the nodes are $\alpha_0=-1$; the roots $\{\alpha_i\}_{i=1}^{k-1}$  of the adjacent Gegenbauer polynomial $P_{k-1}^{(1,1)}(t)$; and $\alpha_k=1$; and when $\epsilon=1$ the nodes are the roots $\{\alpha_i\}_{i=1}^k$ of the adjacent Gegenbauer polynomials $P_k^{(1,0)}(t)$; and $\alpha_{k+1}=1$. The associated positive weights $\{ \rho_i \}$ with sum equal to $1$ are given by formulas  analogous to \eqref{RhoWeights}. Moreover, the bound \eqref{PolarizationULB_Negative} 
is the best that can be attained by polynomials from the set ${\mathcal L}(n,\tau,h)$.}
\end{theorem}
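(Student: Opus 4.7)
The plan is to mimic the proof of Theorem~\ref{PULB}, but to enlarge the set of interpolation nodes so as to include one or both endpoints of $[-1,1]$. This produces extra linear factors $(1\pm t)$ in the Hermite error polynomial whose sign compensates for the reversed sign of $h^{(\tau+1)}$, while the corresponding Gauss--Lobatto (for $\epsilon=0$) or Gauss--Radau (for $\epsilon=1$) quadrature rule replaces the Gauss--Jacobi rule used in Theorem~\ref{PULB}.

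Concretely, I would first show that with the nodes $\{\alpha_i\}$ prescribed in the statement---the zeros of $P_{k-1}^{(1,1)}(t)$ augmented by $\pm 1$ when $\epsilon=0$, and the zeros of $P_k^{(1,0)}(t)$ augmented by $1$ when $\epsilon=1$---the interpolatory quadrature
\[
\gamma_n\int_{-1}^1 f(t)(1-t^2)^{(n-3)/2}\,dt=\sum_{i} \rho_i f(\alpha_i)
\]
is exact for every polynomial $f$ of degree at most $\tau$. This is done by writing $f(t)=(1-t^2)P_{k-1}^{(1,1)}(t)q(t)+p(t)$ (for $\epsilon=0$, with $\deg q\leq k-2$ and $\deg p\leq k$) or $f(t)=(1-t)P_k^{(1,0)}(t)q(t)+p(t)$ (for $\epsilon=1$, with $\deg q\leq k-1$ and $\deg p\leq k$), and invoking the orthogonality of the adjacent Jacobi polynomial against $q$ with respect to the shifted Jacobi weight, exactly as in the $\epsilon=1$ argument of Theorem~\ref{PULB}. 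At the nodes, the factor $(1-t^2)$ or $(1-t)$ kills the endpoint contribution while the adjacent polynomial vanishes at the interior nodes, so the quadrature sum collapses to $\sum_i\rho_i p(\alpha_i)$, matching the integral. Positivity of $\rho_i$ then follows by substituting $f=\ell_i^2$ (of degree at most $\tau$), and $\sum_i\rho_i=1$ by substituting $f\equiv 1$.

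Next I would build the Hermite polynomial $H_\tau(t;h)$ of degree at most $\tau$ that matches $h$ at every chosen node and matches $h'$ at every \emph{interior} node. The Hermite error formula yields
\[
h(t)-H_\tau(t;h)=\frac{h^{(\tau+1)}(\xi)}{(\tau+1)!}\,\omega(t),
\]
where $\omega(t)=(t+1)(t-1)\prod_{i=1}^{k-1}(t-\alpha_i)^2$ in the $\epsilon=0$ case and $\omega(t)=(t-1)\prod_{i=1}^{k}(t-\alpha_i)^2$ in the $\epsilon=1$ case. In both situations $\omega(t)\leq 0$ on $[-1,1]$, so the hypothesis $h^{(\tau+1)}\leq 0$ gives $h\geq H_\tau$ pointwise, placing $H_\tau\in\mathcal{L}(n,\tau,h)$. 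Applying Proposition~\ref{prop32} then delivers both \eqref{PolarizationULB_Negative} and \eqref{PolarizationULB_R_Negative}. The optimality claim is verified exactly as in Theorem~\ref{PULB}: for any admissible $f$, quadrature exactness gives $Nf_0=N\sum_i\rho_i f(\alpha_i)\leq N\sum_i\rho_i h(\alpha_i)$, and equality forces $f=h$ at the nodes and $f'=h'$ at the interior nodes, uniquely identifying $f=H_\tau(\cdot\,;h)$.

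The main obstacle is getting the quadrature exactness step right: one must verify that the adjacent Jacobi polynomials prescribed in the statement are precisely the ones for which the orthogonality argument closes, and that the degree count in the decomposition of $f$ covers all polynomials of degree $\leq\tau$. Once this is secured, the rest is a sign-tracking adaptation of the argument for Theorem~\ref{PULB} and no essentially new techniques are required.
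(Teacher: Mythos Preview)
Your proposal is correct and follows essentially the same route as the paper's proof: Gauss--Lobatto/Gauss--Radau quadrature built from the adjacent Jacobi polynomials, the same polynomial decomposition of $f$ to extend exactness to degree $\tau$, and the same Hermite error formula with sign-tracking. One small slip: in the $\epsilon=0$ case there are $k+1$ nodes, so $\ell_i^2$ has degree $2k>\tau=2k-1$ and cannot be substituted directly to prove positivity of $\rho_i$; you instead need test polynomials such as $\frac{1-t^2}{1-\alpha_j^2}\prod_{i\neq j}\frac{(t-\alpha_i)^2}{(\alpha_j-\alpha_i)^2}$ for interior nodes and $\frac{1\mp t}{2}\prod_i\frac{(t-\alpha_i)^2}{(\mp1-\alpha_i)^2}$ for the endpoints (the paper's sketch itself does not spell this out).
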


\begin{proof} \textcolor{black}{We shall sketch the modifications in the proof of Theorem \ref{PULB}. We first consider when $\tau=2k-1$. Using the nodes $\alpha_0=-1$, the roots $\{\alpha_i\}_{i=1}^{k-1}$ of $P_{k-1}^{(1,1)}(t)$ and $\alpha_k=1$, we can define the nodes $\rho_i$ so that the quadrature
\begin{equation} \label{GJQ_1}
\gamma_n \int_{-1}^1 f(t) (1-t^2)^{\frac{n-3}{2}} \, dt=\sum_{i=0}^k \rho_i f(\alpha_i) 
\end{equation}
be exact for polynomials of degree $k$. Using the representation of a generic polynomial $f(t)$ of degree  $2k-1$ 
\[f(t)=(t-1)(t+1)P_{k-1}^{(1,1)}(t)q(t)+p(t),\]
where $q\in \Pi_{k-2}, p\in \Pi_k$, we show similarly the quadrature rule is exact on $\Pi_{2k-1}$. The interpolating polynomial $H_{2k-1}(t;h)$ interpolates $h$ at all $(k+1)$ nodes and interpolates $h^\prime$ at $\{\alpha_i\}_{i=1}^{k-1}$. The error formula \
\begin{equation}\label{HermiteError_Negative}
h(t)-  H_{2k-1}(t;h) = \frac{h^{(2k)}(\xi)}{(2k)!}(t^2-1)(t-\alpha_1)^2 \dots (t-\alpha_{k-1})^2\geq 0 \end{equation}
shows $H_{2k-1}\in {\mathcal L}(n,\tau,h)$. The optimality of $H_{2k-1}$ for the linear program \eqref{LP_ULB} is similarly derived.
}

\textcolor{black}{When $\epsilon=1$, we utilize as Hermite interpolating nodes the roots $\{\alpha_i \}_{i=1}^k$ of $P_k^{(1,0)}(t)$ and add $\alpha_{k+1}=1$ as Lagrange interpolation node. With the weights analogous to \eqref{RhoWeights} the quadrature rule
\begin{equation} \label{QR_1_epsilon}
\gamma_n \int_{-1}^1 f(t) (1-t^2)^{\frac{n-3}{2}} \, dt=\sum_{i=1}^{k+1} \rho_i f(\alpha_i) \nonumber
\end{equation}
is shown similarly to be exact on $\Pi_{2k}$. The error formula now takes the form
\begin{equation}\label{HermiteError1_Negative} 
h(t)-  H_{2k}(t;h) = \frac{h^{(2k+1)}(\xi)}{(2k+1)!}(t-1)(t-\alpha_1)^2 \dots (t-\alpha_k)^2\geq 0,\quad 
\end{equation}
which shows $H_{2k}\in {\mathcal L}(n,\tau,h)$ and the proof is completed as in the odd case. }
\end{proof}

Utilizing \eqref{PULB-PUUB} we have the following max-min polarization universal lower bound (PULB).  The bound \eqref{PULB_N} below is a direct consequence of the first inequality in \eqref{PULB-PUUB} and Definition \ref{Tau_{n,N}}.

\begin{corollary}\label{PULB-N} Let 
$\tau_{n,N}=2k-1+\epsilon$, where $\epsilon\in \{0,1\}$, namely assume there exists a spherical $\tau_{n,N}$-design $C\subset \mathbb{S}^{n-1}$ of cardinality $|C|=N$. Then for  any potential $h$ continuous on $[-1,1]$, finite on $[-1,1)$ for which $h^{(2k+\epsilon)}$ does not change sign in $(-1,1)$,
we have that
\begin{equation}\label{PULB_N}
\mathcal{Q}^{(h)}(N) \geq N\sum_{i\in I} \rho_i h(\alpha_i), 
\end{equation}
with the summation index set $I$, nodes $\{\alpha_i\}$ and weights $\{ \rho_i \}$ as described in Theorems \ref{PULB} and \ref{PULB_Negative}.
\end{corollary}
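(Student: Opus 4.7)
The plan is to derive Corollary \ref{PULB-N} as an immediate consequence of the two design-level polarization bounds already established, namely Theorem \ref{PULB} and Theorem \ref{PULB_Negative}, combined with the trivial inequality recorded in \eqref{PULB-PUUB}. By Definition \ref{Tau_{n,N}}, the integer $\tau_{n,N}$ is chosen precisely so that there exists a spherical $\tau_{n,N}$-design of cardinality $N$ on $\mathbb{S}^{n-1}$; in particular, the set of admissible designs in \eqref{PolMinDes} is nonempty, so $\mathcal{Q}^{(h)}(n,\tau_{n,N},N)$ is a well-defined real number.

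First I would invoke the first inequality in \eqref{PULB-PUUB}. The supremum in \eqref{max-min} defining $\mathcal{Q}^{(h)}(N)$ is taken over \emph{all} $N$-point codes on $\mathbb{S}^{n-1}$, while the supremum in \eqref{PolMinDes} defining $\mathcal{Q}^{(h)}(n,\tau_{n,N},N)$ is restricted to the (smaller) subclass of spherical $\tau_{n,N}$-designs. Consequently
$$\mathcal{Q}^{(h)}(N)\;\geq\; \mathcal{Q}^{(h)}(n,\tau_{n,N},N).$$
It then remains to bound the right-hand side from below by the quadrature expression $N\sum_{i\in I}\rho_i h(\alpha_i)$.

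For this last step I would split into two cases according to the sign of $h^{(\tau_{n,N}+1)}=h^{(2k+\epsilon)}$ on $(-1,1)$, which the hypothesis guarantees is constant. If $h^{(2k+\epsilon)}\geq 0$ on $(-1,1)$, Theorem \ref{PULB} applied with $\tau=\tau_{n,N}$ gives \eqref{PolarizationULB}, with the index set, nodes, and weights described there. If instead $h^{(2k+\epsilon)}\leq 0$ on $(-1,1)$, Theorem \ref{PULB_Negative} applied with the same $\tau$ gives \eqref{PolarizationULB_Negative}, with the corresponding nodes and weights. Either conclusion, when chained with the previous display, yields \eqref{PULB_N}.

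There is no substantive obstacle here: the corollary is a packaging of the design-level bounds together with the tautological inclusion of designs in the class of all $N$-point codes. The only minor checks are the arithmetic identity $\tau_{n,N}+1=2k+\epsilon$, which is immediate from the decomposition $\tau_{n,N}=2k-1+\epsilon$, and the recognition that the hypothesis on $h^{(2k+\epsilon)}$ triggers exactly one of the two sign regimes covered by Theorems \ref{PULB} and \ref{PULB_Negative}.
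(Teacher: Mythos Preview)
Your proposal is correct and follows essentially the same approach as the paper, which simply remarks that the bound is a direct consequence of the first inequality in \eqref{PULB-PUUB} together with Definition \ref{Tau_{n,N}} (and implicitly Theorems \ref{PULB} and \ref{PULB_Negative}). Your case split on the sign of $h^{(2k+\epsilon)}$ makes explicit what the paper leaves implicit, but the logic is identical.
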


We next examine the conditions for achieving the polarization bounds \eqref{PolarizationULB} and \eqref{PolarizationULB_Negative}. 
 
\begin{corollary}\label{attaining-cor-1} Let $h$ be a potential function that  \textcolor{black}{satisfies the conditions of Theorem \ref{PULB} or Theorem \ref{PULB_Negative}}.
If a spherical $\tau$-design $C \subset \mathbb{S}^{n-1}$, $\tau=2k-1+\epsilon$, $|C|=N$, 
attains the bound \eqref{PolarizationULB} or \eqref{PolarizationULB_Negative}, then 
there exists a point $x \in \mathbb{S}^{n-1}$ such that the set $T(x,C)$ of all inner products between $x$ and the points of $C$ coincides with the set 
 $\{\alpha_i\}_{i\in I}$, and the multiplicities of these inner products are $\{N\rho_i \}_{i\in I}$, respectively, where $I$ is the index set above. In particular, the numbers $N\rho_i$, $i\in I$,
are positive integers.
\end{corollary}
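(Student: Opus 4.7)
The plan is to extract structural information about a minimizer of $U_h(\cdot,C)$ from the fact that the Hermite-interpolation lower bound produced in the proofs of Theorems \ref{PULB} and \ref{PULB_Negative} is attained. First, by continuity of $h$ on $[-1,1)$, lower semicontinuity of $h$ at $1$, and compactness of $\mathbb{S}^{n-1}$, the function $x\mapsto U_h(x,C)$ attains its infimum at some $x\in\mathbb{S}^{n-1}$; the hypothesis that $C$ achieves the PULB identifies this infimum with $N\sum_{i\in I}\rho_i h(\alpha_i)$. Let $H$ denote the Hermite interpolating polynomial produced in the proof of the relevant theorem. By construction $H\in\mathcal{L}(n,\tau,h)$, $\deg H\le\tau$, and the Gauss--Jacobi-type quadrature gives $H_0=\sum_{i\in I}\rho_i h(\alpha_i)$. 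Since $C$ is a $\tau$-design, $U_H(x,C)=NH_0$, while admissibility gives $U_h(x,C)\ge U_H(x,C)$ pointwise. Equating the extreme terms and using that each summand $h(x\cdot y)-H(x\cdot y)$ is non-negative forces
\begin{equation*}
h(x\cdot y)=H(x\cdot y)\qquad\text{for every } y\in C.
\end{equation*}

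Next, the Hermite error formulas \eqref{HermiteError}--\eqref{HermiteError1_Negative} express $h(t)-H(t)$ as a non-negative multiple of a product that vanishes exactly at the nodes $\{\alpha_i\}_{i\in I}$ on $[-1,1]$. Under the constant-sign condition on $h^{(\tau+1)}$, read as strict on $(-1,1)$ --- the standard setting in which the PULB is informative --- the zero set of $h-H$ on $[-1,1]$ coincides with $\{\alpha_i\}_{i\in I}$, hence $T(x,C)\subseteq\{\alpha_i\}_{i\in I}$. To pin down the multiplicities, set $m_i:=|\{y\in C:x\cdot y=\alpha_i\}|$ and let $\ell_i$ be the Lagrange basic polynomial with $\ell_i(\alpha_j)=\delta_{ij}$. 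A short node count in each of the four cases shows $\deg\ell_i\le\tau$, so the design property applied to $\ell_i$, together with the containment just obtained, gives
\begin{equation*}
m_i=\sum_{y\in C}\ell_i(x\cdot y)=N(\ell_i)_0=N\rho_i,
\end{equation*}
where the last equality is the defining quadrature formula for $\rho_i$ in \eqref{RhoWeights}. Positivity of $\rho_i$ forces $m_i\ge 1$, hence $T(x,C)=\{\alpha_i\}_{i\in I}$ with the claimed multiplicities, and in particular each $N\rho_i$ is a positive integer.

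The main obstacle I expect is the middle step --- upgrading the pointwise equalities $h(x\cdot y)=H(x\cdot y)$ to the containment $T(x,C)\subseteq\{\alpha_i\}_{i\in I}$ --- because the Hermite remainder involves an unknown $\xi(t)\in(-1,1)$. The clean resolution is to interpret the constant-sign hypothesis on $h^{(\tau+1)}$ as strict on $(-1,1)$, which excludes the trivial case $h\in\Pi_\tau$ (where $U_h(x,C)=Nh_0$ for every $x$ by the design property, so the structural conclusion about $T(x,C)$ would not force $C$ to have any particular geometry). The remaining ingredients --- compactness, the design property, the quadrature identity, and the degree check on $\ell_i$ --- are routine manipulations already embedded in the proofs of Theorems \ref{PULB} and \ref{PULB_Negative}.
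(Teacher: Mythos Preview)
Your proof is correct and follows essentially the same route as the paper: use the strict sign of $h^{(\tau+1)}$ together with the Hermite remainder to force $T(x,C)\subseteq\{\alpha_i\}_{i\in I}$, then invoke the design identity \eqref{defin_f} on the Lagrange basic polynomials to recover the multiplicities. Your version is in fact slightly tidier: by reading off $m_i=N(\ell_i)_0=N\rho_i>0$ directly, you obtain $T(x,C)=\{\alpha_i\}_{i\in I}$ for free, whereas the paper first argues separately (via the degree-$(\tau{+}1)$ annihilating polynomial) that no node can be missed and only afterwards substitutes the $\ell_i$ to pin down the frequencies.
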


\begin{remark} During a January 2022 workshop at the Erwin Schr\"{o}dinger Institute in Vienna, S.  Borodachov introduced the notion of $m$-stiff configurations, namely spherical $(2m-1)$-designs, for which there is a point $x\in \mathbb{S}^{n-1}$ such that the cardinality of the set $T(x,C)$ is exactly $m$. Such $m$-stiff configurations attain the bound \eqref{PolarizationULB} ($k=m$ in our notations). 
\end{remark}

\begin{proof} (of Corollary \ref{attaining-cor-1}) Observe that should the potential $h$ satisfy $h^{(2k+\epsilon)}(\xi)>0$ for all $\xi\in(-1,1)$, the Hermite  interpolation error formulas \eqref{HermiteError} and \eqref{HermiteError1} would hold with strict inequality for all $t\not\in \{\alpha_i \}_{i\in I}$. Similarly, $h^{(2k+\epsilon)}(\xi)<0$ implies \eqref{HermiteError_Negative} and \eqref{HermiteError1_Negative} hold with strict inequality. Therefore, if equality in \eqref{PolarizationULB} or \eqref{PolarizationULB_Negative} occurs for some spherical $\tau$-design $C$, then there is an $x \in \mathbb{S}^{n-1}$ such that 
\[x\cdot y \in \{\alpha_i \}_{i\in I}\quad  {\rm for \ all} \quad\ y\in C.\]
Suppose there is a $j\geq 1$ such that for all  $y\in C$ we have $x\cdot y \not=\alpha_j$. Denote by $p$ any of the polynomials on the right-hand sides of \eqref{HermiteError}, \eqref{HermiteError1}, \eqref{HermiteError_Negative}, and \eqref{HermiteError1_Negative}. Note that $p\in \Pi_{\tau+1}$, 
$p(x\cdot y)=0$ for all $y\in C$, and $0\not\equiv p(t)\geq 0$ on $[-1,1]$, so we have that
\[ 0=\sum_{y\in C} p(x\cdot y )=Np_0=N \gamma_n \int_{-1}^1  p(t)(1-t^2)^{(n-3)/2}\, dt\not=0.\]
Thus, we conclude that equality in \eqref{PolarizationULB} or \eqref{PolarizationULB_Negative} yields the existence of $x\in \mathbb{S}^{n-1}$ such that the collection $T(x,C)$ coincides with the quadrature nodes  $\{\alpha_i\}_{i\in I}$.

To show that the frequencies of these inner products are exactly $\{ N\rho_i \}_{i\in I}$ and are positive integers, we simply substitute in \eqref{defin_f} the Lagrange basic polynomials $\ell_i (t)$ associated with the quadrature nodes. This completes the proof.
\end{proof}
We now present two notable configurations attaining the bound \eqref{PolarizationULB}. Moreover, both configurations also attain the Fazekas-Levenshtein bound \eqref{FL_bound}, a fact that seems not to have been noticed before.

\begin{example} \label{Cube}
Let $C_0$ denote the cube inscribed in $\mathbb{S}^2$. Without loss of generality orient two of its sides to be horizontal with the vertices being $\{ (\pm1/\sqrt{3},\pm1/\sqrt{3},\pm1/\sqrt{3})\}$. The cube is a spherical $3$-design. This and the Delsarte-Goethals-Seidel bound for $4$-designs imply that $\tau_{3,8}=3$. The Gegenbauer polynomial $P_2^{(3)}(t)=(3t^2-1)/2$ has zeros $\alpha_1= -\frac{1}{\sqrt{3}}$ and  $\alpha_2=\frac{1}{\sqrt{3}}$. The corresponding weights are found to be $\rho_1=\rho_2=1/2$ (note that $8\rho_1=8\rho_2=4$), so the bound \eqref{PolarizationULB} takes the form
\begin{equation} \label{Cube_18}
\mathcal{Q}^{(h)}(3,3,8)\geq 4h\left(-\frac{1}{\sqrt{3}}\right)+4h\left(\frac{1}{\sqrt{3}}\right). \end{equation}
Denoting the North pole by $\widetilde{x}=(0,0,1)$ we have $T(\widetilde{x},C_0)=\{\alpha_1,\alpha_2\}$, both inner products have multiplicity 4,
so the conditions of Corollary \ref{attaining-cor-1} hold true and we see that $U_h(\widetilde{x},C_0)$ equals the right-hand side of \eqref{Cube_18}. 
Therefore, the cube is a universal optimal code with respect to  \eqref{PolarizationULB} in the sense that the nodes and weights do not depend on the potential $h$.
Moreover, Corollary \ref{PULB-N} implies that
\[ \mathcal{Q}^{(h)}(8)\geq 4h\left(-\frac{1}{\sqrt{3}}\right)+4h\left(\frac{1}{\sqrt{3}}\right)=U_h(\widetilde{x},C_0)=\mathcal{Q}^{(h)}(3,3,8). \] 

Note that by symmetry all intersections of the coordinate axes with the unit sphere will constitute points of minima of the discrete potential $U_h (x,C_0)$.
The graphs of the Newton potential ${\rm v}(t)=1/\sqrt{2(1-t)}$ and the Gaussian potential ${\rm w}(t)=e^{2(t-1)}$ and their corresponding Hermite interpolants $H_{3}(t;{\rm v})$ and $H_{3}(t;{\rm w})$ at $\alpha_1, \alpha_2$, respectively, are shown in Figure \ref{fig:cube} along with the touching nodes. \begin{figure}[htbp]
\centering
\includegraphics[width=3.5 in]{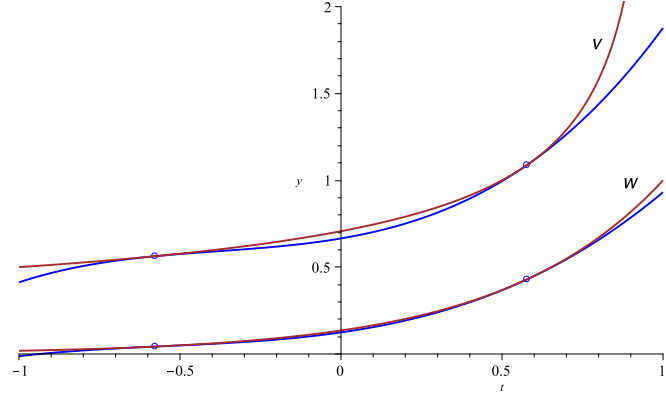}
\caption{$n=3$, $N=8$, $\tau=\tau_{3,8}=3$. Graphs of potentials $h$, Hermite interpolants $H_3(t;h)$, and nodes $\alpha_i$, $i=1,2$ for the Newton potential ${\rm v}(t)=1/\sqrt{2(1-t)}$, (PULB $=6.6027$)
and the Gauss potential ${\rm w}(t)=e^{2(t-1)}$, (PULB $=1.8883$).}
\label{fig:cube}
\end{figure}

To illustrate Theorem \ref{PULB_Negative} suppose $h^{(4)}(t)\leq 0$ on $(-1,1)$. The cube is a 3 design, so the nodes are $\alpha_0=-1$, $\alpha_1=0$ and $\alpha_2=1$. The bound \eqref{PolarizationULB_Negative} is computed to be
\[ \mathcal{Q}^{(h)}(8)\geq \mathcal{Q}^{(h)}(3,3,8)\geq 8\left(\frac{1}{6}h(-1)+\frac{2}{3}h(0)+\frac{1}{6}h(1)\right). \] 
\hfill $\Box$
\end{example}

\begin{example} \label{Lower24cell}
Assume $C\subset \mathbb{S}^3$ and $|C|=24$. There exists a collection of $24$-point $5$-designs on $\mathbb{S}^3$ which 
is a three-parameter family (see \cite{CCEK}). This and the Delsarte-Goethals-Seidel bound for 6-designs implies that $\tau_{4,24}=5$.
The Gegenbauer polynomial $P_3^{(4)}(t)=t(2t^2-1)$ has zeros $\{ -\frac{1}{\sqrt{2}},0,\frac{1}{\sqrt{2}}\}$. The graphs of
the Newton potential ${\rm v}(t)=1/(2(1-t))$ and the Gaussian potential ${\rm w}(t)=e^{2(t-1)}$ and the corresponding Hermite interpolants
(i.e. respective optimal interpolation polynomials) 
$H_{5}(t;{\rm v})$ and $H_{5}(t;{\rm w})$ are shown in Figure \ref{fig:1} along with the touching nodes.

\begin{figure}[htbp]
\centering
\includegraphics[width=3.5 in]{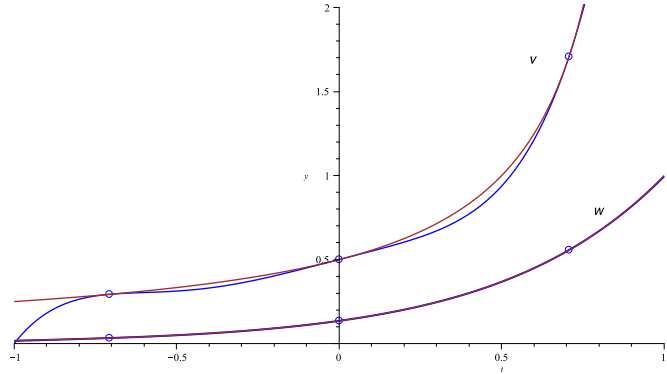}
\caption{$n=4$, $N=24$, $\tau=\tau_{4,24}=5$.  Graphs of potentials $h$, Hermite interpolants $H_5(t;h)$, and nodes $\alpha_i$, $i=1,2,3$ for the Newton potential ${\rm v}(t)=\frac{1}{2(1-t)}$ (PULB $=18$)
and the Gauss potential ${\rm w}(t)=e^{2(t-1)}$ (PULB $=5.1614$).}
\label{fig:1}
\end{figure}

The corresponding weights are $\{\frac{1}{4},\frac{1}{2},\frac{1}{4} \}$, so the bound \eqref{PolarizationULB} takes the form
\[ \mathcal{Q}^{(h)}(4,5,24)\geq 6h\left(-\frac{1}{\sqrt{2}}\right)+12h(0)+6h\left(\frac{1}{\sqrt{2}}\right). \] 
Note that $24\rho_1=24\rho_3=6$ and $24\rho_2=12$ are positive integers. 

Of particular interest is the $5$-design kissing number configuration $D_4$, or the so-called $24$-cell (see \cite{CCEK, M}). The coordinates of its $24$ vertices are formed by the permutations of $(\pm \frac{1}{\sqrt{2}}, \pm \frac{1}{\sqrt{2}},0,0)$. For any of the points $\widetilde{x}$ whose coordinates are a permutation of $(\pm 1,0,0,0)$, that is the intersections with the coordinate axes, we calculate that
\[ U_h(\widetilde{x},D_4)= 6h\left(-\frac{1}{\sqrt{2}}\right)+12h(0)+6h\left(\frac{1}{\sqrt{2}}\right),\]
thereby deriving that these points are minima of the polarization potential. Therefore, the 24-cell is universally optimal with respect to the bound  \eqref{PolarizationULB}.

We can determine all points $\widetilde{x}$ where the minimun is attained as follows. Suppose that a point $\widetilde{x}=(x_1,x_2,x_3,x_4)\in \mathbb{S}^3$ is such a minimum. From Corollary \ref{attaining-cor-1} we derive that the inner products from $x$ to the points in $D_4$ will have to be in $A:=\{ -\frac{1}{\sqrt{2}},0,\frac{1}{\sqrt{2}}\}$ with frequencies $\rho:=\{ 6, 12, 6\}$, respectively. If there is a coordinate $x_i=0$, then there has to be a coordinate $x_j\not= 0$ and by selecting the point from $D_4$ with $1/\sqrt{2}$ at $i$-th and $j$-th place we obtain that the inner product between $x$ and this point is $x_j /\sqrt{2} \in A$, which implies $x_j=\pm 1$, minima that is already accounted for. Therefore, we may assume all $x_i\not= 0$. If there are two indexes $i\not= j$, such that $0<|x_i|<|x_j|$, then by selecting appropriate point from $D_4$ we shall obtain an inner product $0<(|x_j|-|x_i|)/\sqrt{2}<1/\sqrt{2}$, which is a contradiction. Therefore, all minima that are either permutations of $(\pm 1,0,0,0)$, or of the type $\left(\pm \frac{1}{2},\pm \frac{1}{2},\pm \frac{1}{2},\pm \frac{1}{2}\right)$. 

\textcolor{black}{ As an application of Theorem \ref{PULB_Negative}, when $h^{(6)}(t)\leq 0$ on $(-1,1)$ we have the PULB bound
\[ \mathcal{Q}^{(h)}(4,5,24)\geq 24\left[ \frac{1}{20}h(-1) + \frac{9}{20}h\left(-\frac{1}{\sqrt{6}}\right)+\frac{9}{20}h\left(\frac{1}{\sqrt{6}}\right)+\frac{1}{20}h(1)\right]. \] 
}\hfill $\Box$\end{example}

\textcolor{black}{ Observe, that since $N\rho_i$ are not integers, neither the cube, nor the $24$-cell achieve the bound \eqref{PolarizationULB_Negative} in general. Our next example shows a configuration that attains the bound \eqref{PolarizationULB_Negative}. We shall pursue in more detail the codes that are optimal for Theorem \ref{PULB_Negative} in a subsequent manuscript.}
\textcolor{black}{ 
\begin{proposition} \label{Negative_Cross}Let $h$ be a potential such that $h^{(4)}(t)\leq 0$ on $(-1,1)$. Then the cross-polytope $C_{2n}=\{\pm e_i,i=1,\dots,n\}$ is a universal optimal code with respect to \eqref{PolarizationULB_Negative}, in the sense that the nodes and weights do not depend on the potential $h$.
\end{proposition}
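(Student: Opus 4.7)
The plan is to verify that the cross-polytope fits the hypotheses of Corollary \ref{attaining-cor-1} for the bound \eqref{PolarizationULB_Negative}. Since $C_{2n}$ is a spherical $3$-design of cardinality $N = 2n$, we are in the regime of Theorem \ref{PULB_Negative} with $\tau = 3$, $k = 2$, $\epsilon = 0$.

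First I would identify the quadrature data from Theorem \ref{PULB_Negative}. For $\epsilon = 0$ the nodes are $\alpha_0 = -1$, the root of $P_1^{(1,1)}(t)$, and $\alpha_k = 1$; since the Jacobi parameters $\alpha = \beta = (n-1)/2$ are equal, the associated weight on $[-1,1]$ is even, so the middle node is $\alpha_1 = 0$. Imposing exactness of the quadrature on $\Pi_3$ by testing against $1$, $t$, $t^2$ (using the standard Gegenbauer moment $\gamma_n \int_{-1}^1 t^2(1-t^2)^{(n-3)/2}\,dt = 1/n$) yields $\rho_0 = \rho_2 = 1/(2n)$ and $\rho_1 = (n-1)/n$, so the integers $N\rho_0 = 1$, $N\rho_1 = 2(n-1)$, $N\rho_2 = 1$ satisfy the integrality requirement of Corollary \ref{attaining-cor-1}.

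The crucial step is the geometric match. Fixing any vertex $\widetilde{x} = e_j \in C_{2n}$, the inner products with the $2n$ points of $C_{2n}$ are $-1$ (from $-e_j$, once), $0$ (from $\pm e_i$ with $i \neq j$, a total of $2(n-1)$ times), and $1$ (from $e_j$, once). Thus $T(\widetilde{x}, C_{2n}) = \{-1, 0, 1\} = \{\alpha_i\}_{i=0}^{2}$ carries exactly the multiplicities $\{N\rho_i\}_{i=0}^{2}$, and so
\[ U_h(\widetilde{x}, C_{2n}) = h(-1) + 2(n-1) h(0) + h(1) = N \sum_{i=0}^{2} \rho_i h(\alpha_i). \]

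To close the argument, I would invoke Proposition \ref{prop32} applied to the Hermite interpolant $H_3(\cdot;h)$ from the proof of Theorem \ref{PULB_Negative}: under the hypothesis $h^{(4)}(t)\leq 0$ on $(-1,1)$, the error formula \eqref{HermiteError_Negative} places $H_3(\cdot;h)$ in $\mathcal{L}(n,3,h)$, which gives the pointwise lower bound $U_h(x, C_{2n}) \geq N \sum_{i=0}^{2} \rho_i h(\alpha_i)$ on all of $\mathbb{S}^{n-1}$. Together with the equality at $\widetilde{x} = e_j$, this forces $\mathcal{Q}_h(C_{2n}) = N \sum_{i=0}^{2} \rho_i h(\alpha_i)$, so $C_{2n}$ attains \eqref{PolarizationULB_Negative}. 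Since the nodes $\{-1, 0, 1\}$ and weights $\{1/(2n), (n-1)/n, 1/(2n)\}$ depend only on $n$, the cross-polytope is universally optimal in the sense stated. I expect no genuine obstacle: the weights come from a small linear system, and the coincidence of $T(e_j, C_{2n})$ with the quadrature nodes is forced by the orthonormality of the standard basis.
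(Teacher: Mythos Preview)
Your proposal is correct and follows essentially the same route as the paper: identify the three nodes $\{-1,0,1\}$ and weights $\{1/(2n),(n-1)/n,1/(2n)\}$ from Theorem \ref{PULB_Negative} with $\tau=3$, then observe that any vertex $\widetilde{x}=e_j$ realizes exactly these inner products with the required multiplicities, so the PULB \eqref{PolarizationULB_Negative} is attained. The paper's proof is terser (it simply states the bound and says ``This clearly is satisfied for all vertices of the cross-polytope''), while you spell out the weight computation and the pointwise comparison via $H_3(\cdot;h)\in\mathcal{L}(n,3,h)$; one small framing remark is that Corollary \ref{attaining-cor-1} is a \emph{necessary} condition for attainment, so it serves here only as a heuristic guide---your actual argument is the direct verification, which is exactly what is needed.
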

\begin{proof} Since the cross-polytope is a $3$ design, the corresponding nodes for $3$ designs are $\alpha_0=-1$, $\alpha_1=0$, and $\alpha_2=1$. Computing the weights we derive from \eqref{PolarizationULB_Negative} that 
\[ Q^{(h)}(n,3,2n)\geq h(-1)+(2n-2)h(0)+h(1).\]
This clearly is satisfied for all vertices of the cross-polytope.
\end{proof}}
\section{Universal upper bounds for polarization}\label{UpperBound}

We next utilize \eqref{defin_f} to derive upper bounds on the maximal polarization constant for spherical designs based on their covering radius. 

\begin{definition} \label{UpperClass} 
Let $\tau$ be a positive integer, $h$ be a potential function, and $s\in(-1,1]$. Denote by 
${\mathcal A}(n,s,\tau,h)$  the class of {\em upper admissible polynomials}  $f$ that satisfy 
\begin{itemize}
\item[(B1)] $\deg(f) \leq \tau$;
\item[(B2)] $f(t) \geq h(t)$ for every $t \in [-1,s]$.
\end{itemize}
\end{definition}

\begin{proposition} \label{prop42}
Let $h(t)$ be a potential function, $\tau$ be a positive integer, $C\subset \mathbb{S}^{n-1}$ be a spherical $\tau$-design of 
cardinality $N$, and $s=s_C \leq s_{\tau,N}$. Let $f \in {\mathcal A}(n,s,\tau, h)$ be an upper admissible polynomial. Then 
the following upper bound
\begin{equation}\label{upper-bound}
U_h(x,C) \leq  U_f(x,C) = f_0|C|
\nonumber
\end{equation}
 holds for every point $x \in \mathbb{S}^{n-1}$ such that $T(x,C) \subset [-1,s]$. 
Subsequently,
\begin{equation}\label{DY_UB}
\mathcal{Q}_h(C) \leq \min_{f\in {\mathcal A}(n,s,\tau,h)} \{ f_0|C| \}. 
\end{equation}
\end{proposition}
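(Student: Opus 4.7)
The plan is to derive the pointwise inequality first and then pass to the infimum. The key observation is that Definition \ref{def-des} applied to the admissible polynomial $f$ gives the exact identity $U_f(x,C) = f_0|C|$ for \emph{every} $x \in \mathbb{S}^{n-1}$, since condition (B1) ensures $\deg(f)\le \tau$ and $C$ is a spherical $\tau$-design. This converts the problem from bounding a sum of $h$-values into bounding a sum of $f$-values, with the advantage that the latter sum is constant in $x$.

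Next, I would fix an $x \in \mathbb{S}^{n-1}$ with $T(x,C) \subset [-1,s]$. For every $y \in C$ the inner product $x\cdot y$ lies in $[-1,s]$, so condition (B2) yields $h(x\cdot y) \le f(x\cdot y)$. Summing this inequality over $y \in C$ gives
\[
U_h(x,C) = \sum_{y\in C} h(x\cdot y) \le \sum_{y\in C} f(x\cdot y) = U_f(x,C) = f_0|C|,
\]
which is exactly the asserted pointwise upper bound.

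To obtain the consequence \eqref{DY_UB}, I would use the hypothesis $s = s_C$. By the definition \eqref{CoveringRadius} of $s_C$, there exists a point $x^\star \in \mathbb{S}^{n-1}$ realizing the min-max, so $\max T(x^\star, C) = s_C = s$, and therefore $T(x^\star, C) \subset [-1,s]$. Applying the pointwise bound at $x^\star$ and using the trivial inequality $\mathcal{Q}_h(C) = \inf_x U_h(x,C) \le U_h(x^\star, C)$, we get $\mathcal{Q}_h(C) \le f_0 |C|$ for every $f \in \mathcal{A}(n,s,\tau,h)$, and taking the minimum over this class yields \eqref{DY_UB}.

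I do not anticipate any serious obstacle here: the argument is a direct Delsarte-Yudin style dual pairing, and the only subtle point is being careful that the pointwise upper bound is available \emph{only} at points $x$ whose inner products with $C$ remain in $[-1,s]$, which is precisely where (B2) provides the comparison $f\ge h$. The assumption $s_C \le s_{\tau,N}$ plays no direct role in the inequality itself; it simply guarantees that the class of admissible $s$ is consistent with the design class under consideration.
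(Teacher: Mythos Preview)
Your proof is correct and follows essentially the same Delsarte--Yudin argument as the paper: use (B1) with Definition~\ref{def-des} to get $U_f(x,C)=f_0|C|$, use (B2) on $[-1,s]$ for the pointwise comparison, and then pass to the infimum via a point $x^\star$ with $T(x^\star,C)\subset[-1,s]$. Your observation that the hypothesis $s_C\le s_{\tau,N}$ plays no role in the inequality itself is accurate; the existence of $x^\star$ follows already from $s=s_C$.
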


\begin{proof}
The condition (B1) implies that $U_f(x,C) = f_0|C|$. From (B2) we derive $U_h(x,C) \leq  U_f(x,C)$ for every $x \in \mathbb{S}^{n-1}$ such that
$T(x,C) \subset [-1,s]$.  Note that such points $x$ exist because of the assumption $s=s_C \leq s_{\tau,N}$. Now \eqref{DY_UB} 
follows from \eqref{max-min} and \eqref{upper-bound}. 
\end{proof}

The following corollary is an immediate consequence of \eqref{DY_UB}.
\begin{corollary} If the collection of spherical $\tau$-designs $C \subset \mathbb{S}^{n-1}$ of cardinality $|C|=N$ is non-empty, then
\begin{equation} \label{Polarization_UB1}
\mathcal{Q}^{(h)}(n,\tau,N) \leq \min_{f\in {\mathcal A}(n,s_{\tau,N},\tau,h)} \{ f_0N \}. 
\end{equation}
\end{corollary}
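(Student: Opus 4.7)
The plan is to deduce the bound directly from Proposition \ref{prop42} by chaining it with the monotonicity of the admissibility class $\mathcal{A}(n,\cdot,\tau,h)$ in its third slot and the definition of $s_{\tau,N}$. The basic observation is that for every spherical $\tau$-design $C \subset \mathbb{S}^{n-1}$ of cardinality $N$, the inequality $s_C \leq s_{\tau,N}$ holds by the definition \eqref{CoveringRadiusDes}. Since condition (B2) requires $f \geq h$ on $[-1,s]$, enlarging $s$ only tightens the requirement, and hence we obtain the inclusion
$$\mathcal{A}(n,s_{\tau,N},\tau,h) \subseteq \mathcal{A}(n,s_C,\tau,h)$$
for every such design $C$.

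Next I would fix an arbitrary $f \in \mathcal{A}(n,s_{\tau,N},\tau,h)$ and an arbitrary $\tau$-design $C$ with $|C|=N$. By the inclusion above, $f \in \mathcal{A}(n,s_C,\tau,h)$, and by compactness of $\mathbb{S}^{n-1}$ together with continuity of the inner product, the minimum defining $s_C$ in \eqref{CoveringRadius} is attained at some $x_* \in \mathbb{S}^{n-1}$, so that $T(x_*,C) \subseteq [-1,s_C]$. Proposition \ref{prop42} applied with this $x_*$ then gives $U_h(x_*,C) \leq U_f(x_*,C) = f_0 N$, and therefore $\mathcal{Q}_h(C) = \inf_{x \in \mathbb{S}^{n-1}} U_h(x,C) \leq f_0 N$. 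Taking the supremum over all $\tau$-designs $C$ of cardinality $N$ yields $\mathcal{Q}^{(h)}(n,\tau,N) \leq f_0 N$, and a final minimization over $f \in \mathcal{A}(n,s_{\tau,N},\tau,h)$ delivers \eqref{Polarization_UB1}.

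I do not expect any genuine obstacle here: the statement is a clean corollary of Proposition \ref{prop42} coupled with the antitonicity of $\mathcal{A}(n,s,\tau,h)$ in $s$. The only mildly delicate point is that should $\mathcal{A}(n,s_{\tau,N},\tau,h)$ happen to be empty, the right-hand side is $+\infty$ by the convention for the infimum of the empty set and the bound is vacuously true, so no additional nonemptiness hypothesis on the admissibility class is needed beyond the standing assumption that $\tau$-designs of the prescribed cardinality exist.
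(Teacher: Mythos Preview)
Your argument is correct and is essentially the same as the paper's: the paper simply declares the corollary ``an immediate consequence of \eqref{DY_UB}'' without further comment, and what you have done is spell out the two implicit steps (the antitonicity $\mathcal{A}(n,s_{\tau,N},\tau,h)\subseteq\mathcal{A}(n,s_C,\tau,h)$ coming from $s_C\le s_{\tau,N}$, and the passage to the supremum over $C$) that make that immediacy explicit.
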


The case $s=1$ does not impose restrictions on the points $x$ in Proposition \ref{prop42} and we derive a upper bound for the quantity $\mathcal{R}_h(C)$, 
\begin{equation}\label{UpperB}
\mathcal{R}_h(C)\le f_0|C|,
\end{equation}
provided $f \in \mathcal{A}(n,1,\tau,h)$ and $h$ is finite and continuous at $1$.

The above considerations give rise to the linear program
\begin{align}\label{LP_UUB}
{\rm given}: & \quad n, \ \tau, \ s, \ N, \ h \nonumber \\
{\rm minimize:} & \quad f_0  \\
{\rm subject \ to:} &\quad  f \in {\mathcal A}(n, s,\tau,h)\nonumber
\end{align}

The presence of the parameter $s$ requires another approach (compared to Section 3). In analyzing the program \eqref{LP_UUB} 
we shall use the Gram-Schmidt orthogonalization process with respect to the signed measure 
\[d\nu^{s,1-\epsilon} (t):=\gamma_n (1+t)^{1-\epsilon}(s-t)(1-t^2)^{(n-3)/2}\, dt,\] 
where $\epsilon\in\{0,1\}$, so we recall the following definition (see \cite[Definition 3.4]{CK} and \cite[Definition 2.1]{BDHSS-DCC19}).

\begin{definition}\label{nu_pos}
A signed Borel measure $\nu$ on $\mathbb{R}$ for which all polynomials are integrable is called {\em positive definite up to degree $m$}  if for all real polynomials $p\not\equiv 0$ of degree at most $m$ we have $ \int p(t)^2\, d\nu(t) >0$.
\end{definition} 

The following lemma investigates the positive definiteness of the measure $\nu^{s,1-\epsilon} (t)$ in terms of $s$.

\begin{lemma} \label{SignedNu_s}  Let $k\geq 1$ and let $t_{k-1+\epsilon}^{0,1-\epsilon }\leq s\leq 1$, where $t_{k-1+\epsilon}^{0,1-\epsilon}$ is the largest root of the $(k-1+\epsilon)$-th (possibly adjacent) Gegenbauer polynomial $P_{k-1+\epsilon}^{0,1-\epsilon}(t)$. Then the signed measure $\nu^{s,1-\epsilon} (t)$ is positive definite up to degree $k-2+\epsilon$.
\end{lemma}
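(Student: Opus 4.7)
The plan is to realize $\int p(t)^2\,d\nu^{s,1-\epsilon}(t)$ as a Gauss--Jacobi quadrature sum whose positive weights attach to nodes $\alpha_i$ lying below $s$, thereby witnessing non-negativity, and then to upgrade this to strict positivity via a zero-counting argument.

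First, I would set up the right quadrature. The weight $\gamma_n(1+t)^{1-\epsilon}(1-t^2)^{(n-3)/2}$ is precisely the positive factor of $d\nu^{s,1-\epsilon}$, and the associated orthogonal polynomials are the adjacent Gegenbauer polynomials $\{P_j^{(0,1-\epsilon)}\}_{j\geq 0}$. Let $\alpha_1<\alpha_2<\dots<\alpha_{k-1+\epsilon}$ be the roots of $P_{k-1+\epsilon}^{(0,1-\epsilon)}(t)$ in $(-1,1)$, so $\alpha_{k-1+\epsilon}=t_{k-1+\epsilon}^{0,1-\epsilon}$. The Gauss--Jacobi rule with these $k-1+\epsilon$ nodes produces positive weights $\rho_1,\dots,\rho_{k-1+\epsilon}$ such that
\[
\gamma_n\!\int_{-1}^{1}\! q(t)(1+t)^{1-\epsilon}(1-t^2)^{(n-3)/2}\,dt = \sum_{i=1}^{k-1+\epsilon}\rho_i\, q(\alpha_i),
\qquad q\in\Pi_{2(k-1+\epsilon)-1}.
\]

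Second, given a polynomial $p\in\Pi_{k-2+\epsilon}$, the auxiliary polynomial $q(t):=p(t)^2(s-t)$ has degree at most $2(k-2+\epsilon)+1=2k-3+2\epsilon$, which lies within the exactness range of the quadrature. Applying it gives the key identity
\[
\int p(t)^2\,d\nu^{s,1-\epsilon}(t) \;=\; \sum_{i=1}^{k-1+\epsilon}\rho_i\, p(\alpha_i)^2\,(s-\alpha_i).
\]
The hypothesis $s\geq \alpha_{k-1+\epsilon}$ makes every factor $s-\alpha_i$ non-negative, and since $\rho_i>0$ and $p(\alpha_i)^2\geq 0$, the right-hand side is non-negative.

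Third, for strict positivity when $p\not\equiv 0$: vanishing of the sum forces $p(\alpha_i)=0$ whenever $s>\alpha_i$. In the generic range $s>\alpha_{k-1+\epsilon}$ this produces $k-1+\epsilon$ distinct zeros of $p$, contradicting $\deg p\leq k-2+\epsilon$, and therefore $p\equiv 0$. The main technical wrinkle concerns the boundary case $s=\alpha_{k-1+\epsilon}$, where the quadrature argument only yields non-negativity (the polynomial $\prod_{i=1}^{k-2+\epsilon}(t-\alpha_i)$ satisfies the degree bound yet gives zero quadrature sum); resolving this boundary will be the delicate step, and I would handle it by a continuity/limiting argument in $s$, or by directly inspecting the extremal polynomial to show its squared $\nu^{s,1-\epsilon}$-integral remains strictly positive because the measure is not fully supported at the quadrature nodes.
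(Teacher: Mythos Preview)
Your quadrature argument is exactly the paper's: one expresses $\int p^2\,d\nu^{s,1-\epsilon}$ as a Gauss sum with nodes at the zeros of $P_{k-1+\epsilon}^{(0,1-\epsilon)}$ and positive weights, and then invokes $s\ge\alpha_{k-1+\epsilon}$ to make every term non-negative (the paper phrases this via the quadratures \eqref{GJQ} and \eqref{QR_epsilon} for the plain measure $d\mu_n$, which amounts to the same nodes and the same sum). For $s>\alpha_{k-1+\epsilon}$ your zero-counting step is clean and complete.

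You are right to flag the boundary $s=\alpha_{k-1+\epsilon}$, but neither of your proposed fixes can work there: the quadrature is \emph{exact} on the relevant degree, so for the extremal choice $p(t)=\prod_{i=1}^{k-2+\epsilon}(t-\alpha_i)\in\Pi_{k-2+\epsilon}$ the integral $\int p^2\,d\nu^{s,1-\epsilon}$ literally equals the quadrature sum, which is $0$; continuity in $s$ only confirms this, and the ``not fully supported at the nodes'' intuition does not help once exactness is in force. (For instance, with $n=3$, $k=2$, $\epsilon=0$ one has $\alpha_1=1/3$ and $\int_{-1}^{1}(1+t)(1/3-t)\,dt=0$.) The paper's own proof simply asserts strict positivity without treating this case, so the lemma as stated with a non-strict lower bound on $s$ is slightly overstated; in its only application (Theorem~\ref{PUUB}), however, one has $s\ge t_k^{0,\epsilon}$ and the strict interlacing $t_{k-1+\epsilon}^{0,1-\epsilon}<t_k^{0,\epsilon}$, so the boundary is never reached and your strict-case argument suffices.
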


\begin{proof} Let $ q(t)\in \Pi_{k-2+\epsilon}$, $q(t) \not\equiv 0$. Using the Gauss-Jacobi quadratures \eqref{QR_epsilon} or \eqref{GJQ}  for $\epsilon=0$ and $\epsilon=1$, respectively, we obtain
\[ \gamma_n \int_{-1}^1 q(t)^2 (1+t)^{1-\epsilon} (s-t)(1-t^2)^{(n-3)/2}\, dt=\sum_{i=\epsilon}^{k-1+\epsilon} \rho_i q(\alpha_i)^2(s-\alpha_i)>0,\]
because $s\geq t_{k-1+\epsilon}^{0,1-\epsilon}=\alpha_{k-1+\epsilon}$. This proves the lemma.
\end{proof}

The positive definiteness of $\nu^{s,1-\epsilon}$ allows us to perform a corresponding Gram-Schmidt orthogonalization procedure and find orthogonal polynomials $\{ q_j^{s,1-\epsilon} (t)\}_{j=0}^{k-1+\epsilon}$ with respect to $\nu^{s,1-\epsilon}$ up to degree $k-1+\epsilon$, which is key in establishing the upper bound for the maximal polarization of designs, an analog of Theorem \ref{PULB}.

Let $\{ \beta_i \}_{i=1}^{k-1+\epsilon}$ be the $k-1+\epsilon$ distinct zeros of the polynomial $q_{k-1+\epsilon}^{s,1-\epsilon}(t)$, 
all in the interval $(-1,s)$. We also set $\beta_0=-1$ when $\epsilon=0$ and $\beta_{k+\epsilon}=s$ either for $\epsilon=0$ and 1, 
assuming the ordering 
\[ -1=\beta_0<\beta_{1}<\cdots<\beta_{k-1+\epsilon}<\beta_{k+\epsilon}=s. \]

\begin{theorem} \label{PUUB} Suppose the potential $h$ has non-negative derivative of order $2k+\epsilon$, $\epsilon\in \{0,1\}$. 
Assume there exists a spherical $\tau$-design $C$
of cardinality $N$ on $\mathbb{S}^{n-1}$, where $\tau=:2k-1+\epsilon$, and let $s=s_{\tau,N}$.
Then the following {\em Polarization-UUB} hold:
\begin{equation}\label{PolarizationUUB}
\mathcal{Q}^{(h)}(n,\tau,N) \leq N  \sum_{i=\epsilon}^{k+\epsilon} r_i h(\beta_i), 
\end{equation}
and
\begin{equation}\label{PolarizationUUB_2}
\mathcal{R}^{(h)}(n,\tau,N) \leq N \sum_{i=\epsilon}^{k+\epsilon} r_i h(\beta_i), 
\end{equation}
with nodes $\{\beta_i\}_{i=\epsilon}^{k+\epsilon}$ as explained above. The corresponding weights $\{ r_i \}_{i=\epsilon}^{k+\epsilon}$ given by 
\begin{equation}\label{weights_beta} 
r_i=\gamma_n\int_{-1}^1 \ell_i (t) (1-t^2)^{(n-3)/2}\, dt ,
\end{equation} 
where $\ell_i(t)$ are the Lagrange basic polynomials associated with the nodes $\{ \beta_i \}_{i=\epsilon}^{k+\epsilon}$, are positive and sum to $1$.
\end{theorem}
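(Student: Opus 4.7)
The plan parallels the proof of Theorem \ref{PULB}, but with the Hermite nodes now chosen relative to the signed measure $\nu^{s,1-\epsilon}$ rather than the standard Gegenbauer weight. The overall strategy is: (i) verify that the zeros of $q_{k-1+\epsilon}^{s,1-\epsilon}$ are simple and lie in $(-1,s)$; (ii) establish a positive $(k+1)$-point quadrature exact on $\Pi_\tau$; (iii) construct a Hermite interpolant $H \in \Pi_\tau$ that dominates $h$ on $[-1,s]$; and (iv) invoke Proposition \ref{prop42} and inequality \eqref{UpperB}.

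For step (i), I would apply the standard orthogonality argument: if $q_{k-1+\epsilon}^{s,1-\epsilon}$ changed sign fewer than $k-1+\epsilon$ times in $(-1,s)$, then multiplying by a polynomial of degree at most $k-2+\epsilon$ vanishing precisely at those sign-change points would yield a sign-definite integrand against the positive-definite measure $\nu^{s,1-\epsilon}$ from Lemma \ref{SignedNu_s}, contradicting orthogonality. For step (ii), I would decompose any $f \in \Pi_\tau$ as
\[
  f(t) = (1+t)^{1-\epsilon}(s-t)\,q(t)\,q_{k-1+\epsilon}^{s,1-\epsilon}(t) + p(t),
\]
with $p \in \Pi_k$ and $q \in \Pi_{k-2+\epsilon}$. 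The integral of the first summand against $\gamma_n(1-t^2)^{(n-3)/2}\,dt$ equals $\int q\,q_{k-1+\epsilon}^{s,1-\epsilon}\,d\nu^{s,1-\epsilon}$ and vanishes by orthogonality, while its value at every node $\beta_i$ vanishes because each $\beta_i$ annihilates one of the factors $(1+t)^{1-\epsilon}$, $(s-t)$, or $q_{k-1+\epsilon}^{s,1-\epsilon}(t)$. Exactness of the Lagrange-defined weights $r_i$ on $\Pi_k$ then handles the remainder $p$, extending exactness to $\Pi_\tau$. Positivity of the $r_i$ follows by a Gauss--Lobatto-type argument: substituting polynomials of degree at most $\tau$ that vanish at the remaining nodes with the correct multiplicities and invoking Lemma \ref{SignedNu_s} to absorb the boundary factors of $\nu^{s,1-\epsilon}$; the identity $\sum_i r_i = 1$ follows from exactness on the constant function~$1$.

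For step (iii), I would let $H \in \Pi_\tau$ be the Hermite interpolant of $h$ matching values at all $k+1$ nodes $\beta_i$ and matching first derivatives at the $k-1+\epsilon$ interior nodes $\beta_1,\dots,\beta_{k-1+\epsilon}$; the count of conditions is exactly $2k+\epsilon = \tau+1$. The standard Hermite remainder formula then gives
\[
  h(t) - H(t) = \frac{h^{(2k+\epsilon)}(\xi_t)}{(2k+\epsilon)!}\,(1+t)^{1-\epsilon}(t-s)\prod_{j=1}^{k-1+\epsilon}(t-\beta_j)^2.
\]
For $t \in [-1,s]$ the factor $(t-s) \le 0$ while the remaining factors are non-negative, and since $h^{(2k+\epsilon)} \ge 0$ by hypothesis, we conclude $H \ge h$ on $[-1,s]$, i.e.\ $H \in \mathcal{A}(n,s,\tau,h)$. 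Exactness of the quadrature on $\Pi_\tau$ gives $H_0 = \sum_i r_i\,h(\beta_i)$, and Proposition \ref{prop42} (applied to any $\tau$-design $C$ with $s_C \le s_{\tau,N}$) then yields the max-min bound \eqref{PolarizationUUB}. The min-max bound \eqref{PolarizationUUB_2} is obtained by carrying out the same construction with $s=1$ in place of $s_{\tau,N}$; the factor $(t-s)$ is then nonpositive on all of $[-1,1]$, so the Hermite interpolant dominates $h$ globally and inequality \eqref{UpperB} applies.

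The main obstacle lies in step (ii): two (or one, when $\epsilon=1$) of the $k+1$ quadrature nodes are pinned at the boundary of $[-1,s]$, leaving only $k-1+\epsilon$ free interior nodes. The degree of exactness attained, $\tau=2k-1+\epsilon$, is accordingly two (resp.\ one) below what a free $(k+1)$-point Gauss rule would provide. Consequently both the decomposition above and the weight-positivity argument must be tailored to this Gauss--Lobatto-type setting, carefully combining the boundary factors from $\nu^{s,1-\epsilon}$ with the positive definiteness afforded by Lemma \ref{SignedNu_s} to absorb the pinned endpoints.
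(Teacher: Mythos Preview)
Your proposal is correct and follows the same route as the paper: build the Gauss--Lobatto--type quadrature from orthogonality against $\nu^{s,1-\epsilon}$, take the Hermite interpolant at the resulting nodes, use the remainder formula to show it lies in $\mathcal{A}(n,s,\tau,h)$, and invoke Proposition~\ref{prop42}. The one place where the paper is more explicit than your sketch is the positivity of the boundary weight $r_0$ in the case $\epsilon=0$: rather than appealing abstractly to Lemma~\ref{SignedNu_s}, the paper applies the Gauss--Jacobi quadrature~\eqref{GJQ} of Theorem~\ref{PULB} to the test polynomial $[q_{k-1}^{s,1}(t)]^2(s-t)$, obtaining $r_0[q_{k-1}^{s,1}(-1)]^2(s+1)=\sum_{i=1}^k\rho_i[q_{k-1}^{s,1}(\alpha_i)]^2(s-\alpha_i)>0$ because $s\geq t_k^{0,0}=\alpha_k$; this is precisely the ``absorbing the boundary factor'' step you allude to, made concrete.
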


\begin{proof} We shall prove \eqref{PolarizationUUB}, the bound \eqref{PolarizationUUB_2} being mere application of \eqref{UpperB}.

We first note the inequalities $t_{k-1+\epsilon}^{0,1-\epsilon}\leq t_k^{0,\epsilon}$, following from \cite[Lemma 3.3]{Lev92} for $\epsilon=1$ 
and \cite[Lemma 3.4]{Lev92} for $\epsilon=0$, respectively. Since $t_k^{0,\epsilon}\leq \ell_{\tau,N}\leq s_{\tau,N} = s$ from \eqref{FL_bound}, 
Lemma \ref{SignedNu_s} can be applied. Then Definition \ref{nu_pos} and Lemma \ref{SignedNu_s} imply that 
\[ \langle f,g \rangle_{\nu^{s,1-\epsilon}} := \int_{-1}^1 f(t) g(t) \, d \nu^{s,1-\epsilon} (t) \]
defines an inner product on $\Pi_{k-2+\epsilon}$. Therefore, 
given a basis $\{p_j\}_{j=0}^{k-1+\epsilon}$ of $\Pi_{k-1+\epsilon}$ the Gram-Schmidt orthogonalization procedure
\begin{equation} \label{sOrtho}
q_0^{s,1-\epsilon}(t)=p_0(t),\quad q_j^{s,1-\epsilon}(t)=p_j-\sum_{i=0}^{j-1}\left( \frac{\langle p_j,q_i^{s,1-\epsilon}\rangle_{\nu^{s,1-\epsilon}}}{\langle q_i^{s,1-\epsilon},q_i^{s,1-\epsilon} \rangle_{\nu^{s,1-\epsilon}}}\right) q_i^{s,1-\epsilon}(t),\quad j=1,\dots,k-1,
\nonumber
\end{equation}
yields an orthogonal basis with respect to $\nu^{s,1-\epsilon}$. Suppose the weights $r_i$, $i=\epsilon,\dots,k+\epsilon$ are selected according to \eqref{weights_beta}. This guarantees that the quadrature formula
\begin{equation} \label{GJs}
\gamma_n \int_{-1}^1 f(t) (1-t^2)^{\frac{n-3}{2}} \, dt=\sum_{i=\epsilon}^{k+\epsilon} r_i f(\beta_i) 
\nonumber
\end{equation}
is exact on $\Pi_k$. As in the proof of Theorem \ref{PULB} we can show the exactness of \eqref{GJs} on $\Pi_{2k-1+\epsilon}$.

%

The positivity of the weights follows similarly as in Theorem \ref{PULB} with one exception. When $\epsilon=1$, we substitute in \eqref{GJs} the polynomials $\ell_i^2(t)$, which yields that $r_i>0$. When $\epsilon=0$ and $i\not= 0$, we use $\ell_i^2(t)/(1+t)\in \Pi_{2k-1}$. Finally, for $\epsilon=0$ and $i=0$, we have from \eqref{GJs} and \eqref{GJQ}
\begin{eqnarray*}
r_0 \left[ q_{k-1}^{s,1} (-1) \right]^2 (s+1) &=& \gamma_n \int_{-1}^1 \left[ q_{k-1}^{s,1} (t) \right]^2 (s-t) (1-t^2)^{\frac{n-3}{2}}\, dt \\
&=& \sum_{i=1}^k \rho_i  \left[ q_{k-1}^{s,1} (\alpha_i) \right]^2 (s-\alpha_i)>0. 
\end{eqnarray*}
 Recall that the nodes $\{\alpha_i\}$ are the roots of $P_k^{(0,0)} (t)$ and $s\geq t_k^{0,0}$, which implies that $r_0>0$. That the weights sum to $1$ is analogous to the argument in Theorem \ref{PULB}.
%

We next define the Hermite interpolation polynomial $H_{2k-1+\epsilon}(t;h,s)$ such that $ H_{2k-1+\epsilon}(\beta_i;h,s)=h(\beta_i)$ for $i=\epsilon,\dots,k+\epsilon$ and $H_{2k-1+\epsilon}^\prime(\beta_i;h,s)=h^\prime (\beta_i)$, $i=1,\dots,k-1+\epsilon$. We again utilize the Hermite interpolation error formula (see \cite[Lemma 2.1]{CK}) and derive that for any $t\in [-1,s]$, there exists a $\xi\in (-1,s)$ such that
\begin{equation}\label{HermiteError2}
h(t)-  H_{2k-1+\epsilon}(t;h,s) = \frac{h^{(2k+\epsilon)}(\xi)}{(2k+\epsilon)!}(1+t)^{1-\epsilon}(t-\beta_1)^2 \dots (t-\beta_{k-1+\epsilon})^2(t-s)\leq 0.\end{equation}

Therefore, $f:=H_{2k-1+\epsilon} \in {\mathcal A}(n,s,\tau,h)$. Since $s=s_{\tau,N}$, for any spherical $\tau$-design $C$ we have that $s_C\leq s$.
This means that for some $x^*\in \mathbb{S}^{n-1}$, $( x^*\cdot y )\leq s$ for all $y\in C$, and hence
\[U_h(C)\leq U_h(x^*,C)\leq U_f(x^*,C) =f_0N=N \sum_{i=\epsilon}^{k+\epsilon} r_i h(\beta_i)\]
(here $f_0$ is the constant coefficient in the Gegenbauer expansion of $H_{2k-1+\epsilon}$).
Taking the supremum over all such $\tau$-designs of cardinality $N$ we obtain \eqref{PolarizationUUB}.

To see that the bound is optimal among polynomials in ${\mathcal A}(n,s,\tau,h)$, suppose that $f$ is an arbitrary polynomial in the class. We then have 
\[ Nf_0=N\gamma_n \int_{-1}^1 f(t) (1-t^2)^{(n-3)/2}\, dt=N \sum_{i=\epsilon}^{k+\epsilon} r_i f(\beta_i)\geq N \sum_{i=\epsilon}^{k+\epsilon} r_i h(\beta_i).\]
Therefore, $H_{2k-1+\epsilon}(t;h,s)$ is the unique polynomial attaining the Linear Programming bound \eqref{PolarizationUUB}. 
\end{proof}

For potentials $h$ that are continuous at $t=1$, we can select $s=1$ in Theorem \ref{PUUB} and utilize \eqref{PULB-PUUB} to derive the following min-max polarization universal upper bound (PUUB).  

\begin{corollary}\label{PUUB-N} Let $h$ be a \textcolor{black}{potential with non-negative derivative of order $2k+\epsilon$, $\epsilon\in \{0,1\}$}, that is finite and continuous at $t=1$. If there exists a spherical design $C \subset \mathbb{S}^{n-1}$ of cardinality $|C|=N$ and strength $\tau_{n,N}=2k-1+\epsilon$, then 
\begin{equation}\label{PUUB_N}
\mathcal{R}^{(h)}(N) \leq N\cdot \sum_{i=\epsilon}^{k+\epsilon} r_i h(\beta_i), 
\end{equation}
where the nodes $\{\beta_i\}_{i=1}^{k-1+\epsilon}$ are the roots\footnote{Note that these are exactly the 
Levenshtein parameters as defined in \cite[Theorem 5.39]{Lev} (see also \cite[Theorems 4.1 and 4.2]{Lev92})}
 of the adjacent Gegenbauer polynomials $P_{k-1+\epsilon}^{1,1-\epsilon}$, $\beta_0=-1$, $\beta_{k+\epsilon}=1$ and the weights $\{ r_i \}_{i=\epsilon}^{k+\epsilon}$ are defined in \eqref{weights_beta}.\end{corollary}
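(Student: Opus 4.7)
The plan is to obtain Corollary \ref{PUUB-N} as a direct specialization of Theorem \ref{PUUB} to the choice $s=1$, combined with the second inequality in \eqref{PULB-PUUB} applied with $\tau=\tau_{n,N}$.

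First, I would verify that the Gram--Schmidt construction underlying Theorem \ref{PUUB} goes through at $s=1$ and that the resulting orthogonal polynomial matches the adjacent Gegenbauer stated in the corollary. Since $t_{k-1+\epsilon}^{0,1-\epsilon}<1$, Lemma \ref{SignedNu_s} guarantees that $\nu^{1,1-\epsilon}$ is positive definite up to degree $k-2+\epsilon$, so $q_{k-1+\epsilon}^{1,1-\epsilon}$ is well defined. A direct computation gives
\[
d\nu^{1,1-\epsilon}(t)=\gamma_n(1+t)^{1-\epsilon}(1-t)(1-t^2)^{(n-3)/2}\,dt=\gamma_n(1-t)^{(n-1)/2}(1+t)^{(n-1)/2-\epsilon}\,dt,
\]
which is (up to a positive constant) the Jacobi weight with parameters $\alpha=1+(n-3)/2$ and $\beta=(1-\epsilon)+(n-3)/2$. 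Consequently, $q_{k-1+\epsilon}^{1,1-\epsilon}$ is a scalar multiple of the adjacent Gegenbauer polynomial $P_{k-1+\epsilon}^{(1,1-\epsilon)}$, whose zeros are exactly the claimed interior nodes $\{\beta_i\}_{i=1}^{k-1+\epsilon}$; the boundary nodes $\beta_0=-1$ (when $\epsilon=0$) and $\beta_{k+\epsilon}=1$ are as prescribed.

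Next, I would reproduce the Hermite-interpolation step from the proof of Theorem \ref{PUUB}, now on the full interval $[-1,1]$. Let $H_{2k-1+\epsilon}(t;h,1)\in\Pi_{\tau}$ interpolate $h$ at all nodes $\{\beta_i\}_{i=\epsilon}^{k+\epsilon}$ (this uses continuity of $h$ at $t=1$) and $h'$ at the interior nodes. The error formula analogous to \eqref{HermiteError2} reads
\[
h(t)-H_{2k-1+\epsilon}(t;h,1)=\frac{h^{(2k+\epsilon)}(\xi)}{(2k+\epsilon)!}(1+t)^{1-\epsilon}(t-1)\prod_{i=1}^{k-1+\epsilon}(t-\beta_i)^2\le 0
\]
on $[-1,1]$, the sign being secured by $h^{(2k+\epsilon)}\ge 0$ together with $(t-1)\le 0$. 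Hence $f:=H_{2k-1+\epsilon}(\,\cdot\,;h,1)\in\mathcal{A}(n,1,\tau,h)$, and \eqref{UpperB} combined with the quadrature identity $f_0=\sum_i r_i f(\beta_i)=\sum_i r_i h(\beta_i)$ gives
\[
\mathcal{R}_h(C)\le N\sum_{i=\epsilon}^{k+\epsilon} r_i\,h(\beta_i)
\]
for every spherical $\tau_{n,N}$-design $C$ of cardinality $N$.

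Taking the infimum over all such designs bounds $\mathcal{R}^{(h)}(n,\tau_{n,N},N)$ by the same quantity, and the second inequality in \eqref{PULB-PUUB} then yields $\mathcal{R}^{(h)}(N)\le \mathcal{R}^{(h)}(n,\tau_{n,N},N)$, completing the proof. Most of the substance lies already in Theorem \ref{PUUB}; the only genuinely new steps are the identification of $q_{k-1+\epsilon}^{1,1-\epsilon}$ with (a multiple of) $P_{k-1+\epsilon}^{(1,1-\epsilon)}$ and the verification that the sign in the Hermite error is preserved on the full interval $[-1,1]$. I anticipate no essential obstacle beyond careful bookkeeping of the two parity cases $\epsilon\in\{0,1\}$.
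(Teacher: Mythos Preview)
Your proposal is correct and follows essentially the same route as the paper: specialize Theorem \ref{PUUB} to $s=1$, use the Hermite interpolant $H_{2k-1+\epsilon}(t;h,1)\in\mathcal{A}(n,1,\tau,h)$ to obtain $\mathcal{R}_h(C)\le N\sum_i r_i h(\beta_i)$ for the given design, and then pass to $\mathcal{R}^{(h)}(N)$. Your explicit identification of $q_{k-1+\epsilon}^{1,1-\epsilon}$ with a multiple of $P_{k-1+\epsilon}^{(1,1-\epsilon)}$ via the Jacobi weight is a nice addition that the paper relegates to a separate remark.
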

\begin{remark} Note that as $s\to 1^-$, the polynomials $q_{k-1+\epsilon}^{s,1-\epsilon}$ from Theorem \ref{PUUB} approach the polynomials $P_{k-1+\epsilon}^{1,1-\epsilon}$ from Corollary \ref{PUUB-N}.
\end{remark}

\begin{proof} Using that for such a design $C$ we have
\[U_h(x,C)\leq U_f(x,C) =f_0N=N \sum_{i=\epsilon}^{k+\epsilon} r_i h(\beta_i)\]
(now $f_0$ is the constant coefficient in the Gegenbauer expansion of $f=H_{2k-1+\epsilon}(t;h,1)$),
we can take maximum over $x$ and derive 
\[ \mathcal{R}_h (C) \leq N \sum_{i=\epsilon}^{k+\epsilon} r_i h(\beta_i),\]
which implies the Corollary.
\end{proof}

\begin{corollary}\label{attaining-cor} 
If a spherical $\tau$-design $C \subset \mathbb{S}^{n-1}$, $\tau=2k-1+\epsilon$, $\epsilon \in \{0,1\}$, $|C|=N$, attains the bound \eqref{PolarizationUUB} for \textcolor{black}{a potential with positive derivative of order $2k+\epsilon$}, then 
there exist a point $x \in \mathbb{S}^{n-1}$ such that the set $T(x,C)$ of all inner products between $x$ and the points of $C$ coincides with the set 
 $\{\beta_i\}_{i=\epsilon}^{k+\epsilon}$ and the multiplicities of these inner products are $\{Nr_i \}_{i=\epsilon}^{k+\epsilon}$, respectively. In particular, the numbers $Nr_i$, $i=\epsilon,\ldots,k+\epsilon$, are positive integers. 
\end{corollary}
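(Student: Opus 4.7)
My plan is to mirror the proof of Corollary \ref{attaining-cor-1}, with the upper-bound interpolant $H_{2k-1+\epsilon}(\,\cdot\,;h,s)$ from the proof of Theorem \ref{PUUB} replacing the lower-bound one, and with the node set $\{\beta_i\}_{i=\epsilon}^{k+\epsilon}$ replacing $\{\alpha_i\}$. The argument has three steps.

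\emph{Step 1 (Localization).} By the construction in the proof of Theorem \ref{PUUB}, for any spherical $\tau$-design $C$ of cardinality $N$ there exists $x^*\in \mathbb{S}^{n-1}$ with $T(x^*,C)\subset [-1,s]$ and
\[
\mathcal{Q}_h(C) \;\leq\; U_h(x^*,C) \;\leq\; U_f(x^*,C) \;=\; N\sum_{i=\epsilon}^{k+\epsilon} r_i h(\beta_i),
\]
where $f=H_{2k-1+\epsilon}(\,\cdot\,;h,s)$. Equality in \eqref{PolarizationUUB} for $C$ collapses this chain to equalities, so $\sum_{y\in C}[f-h](x^*\cdot y)=0$. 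Under the strict hypothesis $h^{(2k+\epsilon)}>0$, the Hermite error formula \eqref{HermiteError2} gives $f(t)-h(t)>0$ for all $t\in[-1,s]\setminus\{\beta_i\}_{i=\epsilon}^{k+\epsilon}$. Since every summand is non-negative and the sum is zero, each must vanish, so $T(x^*,C)\subset\{\beta_i\}_{i=\epsilon}^{k+\epsilon}$.

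\emph{Step 2 (Surjectivity).} Suppose, for contradiction, that some $\beta_j$ is missed by $T(x^*,C)$. I will produce a polynomial $p_j\in \Pi_{\tau}$ with $p_j(\beta_i)=0$ for $i\neq j$ and $p_j(\beta_j)>0$. Then, using the design identity \eqref{defin_f} and the quadrature formula from the proof of Theorem \ref{PUUB} (exact on $\Pi_{\tau}$),
\[
0 \;=\; \sum_{y\in C} p_j(x^*\cdot y) \;=\; N(p_j)_0 \;=\; N\sum_{i=\epsilon}^{k+\epsilon} r_i p_j(\beta_i) \;=\; N r_j\, p_j(\beta_j) \;>\;0,
\]
a contradiction, using $r_j>0$ from Theorem \ref{PUUB}. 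The main obstacle is the degree constraint: the naive choice $p_j=\ell_j^2$ (Lagrange-square on the $k+1$ nodes) has degree $2k$, exceeding $\tau=2k-1+\epsilon$ when $\epsilon=0$. I therefore select
\[
p_j(t)=\begin{cases} \ell_j(t)^2 & \text{if } \epsilon=1, \\ (1+t)\prod_{i\neq 0,j}(t-\beta_i)^2 & \text{if } \epsilon=0,\ j\neq 0, \\ (s-t)\prod_{i=1}^{k-1}(t-\beta_i)^2 & \text{if } \epsilon=0,\ j=0, \end{cases}
\]
each of degree at most $\tau$, vanishing at every $\beta_i$ with $i\neq j$, and strictly positive at $\beta_j$. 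The two middle and bottom choices exploit the endpoint nodes $\beta_0=-1$ and $\beta_{k+\epsilon}=s$ to shave one degree off $\ell_j^2$. Crucially, the quadrature identity $(p_j)_0=\sum_i r_i p_j(\beta_i)$ lets the contradiction go through \emph{without} requiring $p_j\geq 0$ globally on $[-1,1]$. Hence $T(x^*,C)=\{\beta_i\}_{i=\epsilon}^{k+\epsilon}$.

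\emph{Step 3 (Multiplicities).} Inserting the Lagrange basic polynomial $\ell_j$ of degree $k\leq \tau$ into the design identity \eqref{defin_f} yields
\[
\#\{y\in C:\,x^*\cdot y=\beta_j\} \;=\; \sum_{y\in C}\ell_j(x^*\cdot y) \;=\; N(\ell_j)_0 \;=\; Nr_j,
\]
the last equality being the definition \eqref{weights_beta}. Therefore the multiplicity of $\beta_j$ in $T(x^*,C)$ is $Nr_j$, which is in particular a positive integer. The principal difficulty of the proof is Step 2: the off-the-shelf Lagrange-square polynomial is one degree too high in the odd-$\tau$ case, forcing the boundary-aware replacements above.
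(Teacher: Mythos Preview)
Your proof is correct and follows exactly the route the paper indicates (``analogous to the proof of Corollary \ref{attaining-cor-1}''): localize via the strict Hermite error formula \eqref{HermiteError2}, prove surjectivity onto $\{\beta_i\}$ by a polynomial-identity contradiction, and read off multiplicities from the Lagrange basic polynomials. Your Step~2 is in fact tidier than the paper's own argument for Corollary~\ref{attaining-cor-1}: there the paper uses the full error polynomial $p\in\Pi_{\tau+1}$ together with $p\ge 0$ and the design identity, whereas you construct node-adapted polynomials $p_j\in\Pi_\tau$ and compute $(p_j)_0$ via the quadrature of Theorem~\ref{PUUB}, so you never need global non-negativity of $p_j$ and you stay within the degree range where \eqref{defin_f} applies.
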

\begin{proof} The proof is analogous to the proof of Corollary \ref{attaining-cor-1}.
\end{proof}

The polarization bounds from Theorem \ref{PULB} and Theorem \ref{PUUB} provide a strip, where the min-max and max-min polarizations of 
spherical designs could live, namely
\[ N\cdot \sum_{i=1-\epsilon}^{k} \rho_i h(\alpha_i) \leq \mathcal{Q}^{(h)}(n,\tau,N),\mathcal{R}^{(h)}(n,\tau,N) \leq 
N \cdot \sum_{i=\epsilon}^{k+\epsilon} r_i h(\beta_i). \]
This will be discussed in detail in a future work but we give here some illustrations, 
providing polarization universal upper bound analogs to Examples \ref{Cube} and \ref{Lower24cell} about the cube on $\mathbb{S}^2$ and the $24$-cell $D_4$ on $\mathbb{S}^3$.

\begin{figure}[htbp]
\centering
\includegraphics[width=3.5 in]{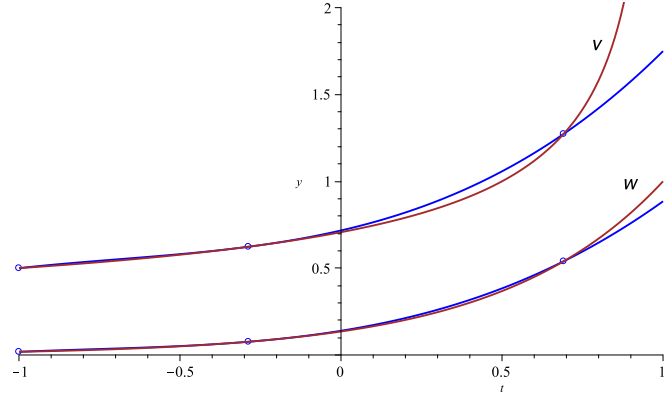}
\caption{$n=3$, $N=8$, $\tau=\tau_{3,8}=3$, $s=0.691$.  Graphs of potentials $h$, Hermite interpolants $H_3(t;h,0.691)$, and nodes $\beta_i$, $i=0,1,2$, 
for the Newton potential ${\rm v}(t)=1/\sqrt{2(1-t)}$ (PUUB $=6.8239$) and the Gauss potential ${\rm w}(t)=e^{2(t-1)}$ (PUUB $=1.9472$).}
\label{fig:8}
\end{figure}
\begin{example}\label{UpperCube}  As noted immediately before Example \ref{Cube}, the cube attains the Fazekas-Levenshtein 
bound \eqref{FL_bound}, so $s_{3,8}\geq 1/\sqrt{3}=\ell_{3,8}$. Let us fix $1/\sqrt{3}<s\leq 1$. In this case $\epsilon=0$, so the measure 
$d\nu^{s,1}(t)=1/2(1+t)(s-t)dt$ is positive definite up to degree 1. The corresponding orthogonal polynomial is $q_1^{s,1}=t-\beta_1$, where $\beta_1=-(1-s)/(3s-1)$. The other two quadrature nodes are $\beta_0=-1$ and $\beta_2=s$. When\footnote{This value is an upper bound for $s_{3,8}$ and is  computed with a method from \cite{St}.}  $s=0.691$, we compute PUUB for Coulomb potential as $6.8239$ and for Gauss potential as $1.9472$, respectively (see Figure \ref{fig:8}).
Selecting $s=1$ and Gauss potential ${\rm w}$, we obtain $\mathcal{R}^{({\rm w})}(8)\leq 2.0795$.

 \hfill $\Box$
\end{example}

\begin{example}\label{Upper24cell} Next we shall consider the PUUB for spherical $5$-designs of $24$ points on $\mathbb{S}^3$, which includes the three-parameter family referenced in Example \ref{Lower24cell} and \cite{CCEK}. From the Fazekas-Levenshtein bound \eqref{FL_bound} 
we have that $\ell_{5,24} = s_{D_4}=1/\sqrt{2}$. Determining the exact value of $s_{5,24}$ is a challenging problem. From \cite[Table 5.3]{St} we have an upper bound $s_{5,24}\leq s^*\approx 0.793867$. Below we fix $s>1/\sqrt{2}$ and determine the monic orthogonal polynomials up to degree two with respect to the measure $d\nu^{s,1}:=(2/\pi)(1+t)(s-t)\sqrt{1-t^2}\, dt$ utilizing the Gram-Schmidt orthogonalization procedure \eqref{sOrtho} applied to the standard basis $\{1,t,t^2\}$. We derive that 
\[ q_0^{s,1}(t)=1,\quad q_1^{s,1}(t)=t+\frac{1-s}{4s-1}, \quad q_2^{s,1}(t)=t^2+\frac{2s(1-s)}{6s^2-2s-1}t-\frac{2s^2-1}{2(6s^2-2s-1)}.
\]
The quadrature nodes are respectively
\begin{equation}\label{24s^*} \beta_0=-1,\quad \beta_{1,2}= \frac{2s(s-1) \mp  \sqrt{28s^4-16s^3-12s^2+4s+2}}{2(6s^2-2s-1)}, \quad \beta_3=s .
\end{equation}
For $s=s^*\approx 0.794$ we obtain upper bounds on the polarization constant for Newton and Gauss potential of $19.0819$ and $5.1675$, respectively (see Figure \ref{fig:2}). Recall our lower bounds from Example \ref{Lower24cell} of $18$ and $5.1614$, respectively. Together these provide the following corollary. 
\begin{corollary}\label{5des} The max-min and min-max polarization quantities of $5$-designs on $24$ points on $\mathbb{S}^3$ satisfy the following bounds
\[ 6h\left(-\frac{1}{\sqrt{2}}\right)+12h(0)+6h\left(\frac{1}{\sqrt{2}}\right) \leq \mathcal{Q}(4,5,24)\leq \mathcal{R}(4,5,24)\leq 
\sum_{i=0}^3 r_ih(\beta_i),\]
where the nodes $\beta_i$ are determined from \eqref{24s^*} with $s^*=0.794$ and $r_i$ are the weights from \eqref{weights_beta}. In particular, for the Newton potential ${\rm v}(t)=1/(2(1-t))$ we have the min-max and the max-min polarization bounds
\[ 18\leq \mathcal{Q}(4,5,24)\leq \mathcal{R}(4,5,24)\leq 19.0819.\]
\end{corollary}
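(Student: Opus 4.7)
The statement aggregates the lower bound from Example~\ref{Lower24cell} and the upper bound from Example~\ref{Upper24cell}, joined by an elementary averaging inequality. I would organize the proof in three short steps.

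\textbf{Step 1 (Lower bound).} Since $\tau_{4,24}=5$ (as recorded in Example~\ref{Lower24cell}), Theorem~\ref{PULB} applies with $n=4$, $\tau=5$, $N=24$, giving $k=3$ and $\epsilon=0$. The roots of the Gegenbauer polynomial $P_3^{(4)}(t)=t(2t^2-1)$ are $\alpha_1=-1/\sqrt{2}$, $\alpha_2=0$, $\alpha_3=1/\sqrt{2}$, with Gauss--Jacobi weights $\rho_1=\rho_3=1/4$ and $\rho_2=1/2$. Substitution into \eqref{PolarizationULB} produces
\[ 6h(-1/\sqrt{2}) + 12h(0) + 6h(1/\sqrt{2}) \;\le\; \mathcal{Q}^{(h)}(4,5,24). \]

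\textbf{Step 2 (Middle inequality).} For any spherical $5$-design $C$ of cardinality $24$, applying Definition~\ref{def-des} to the constant polynomial $f\equiv 1$ (equivalently, integrating $U_h(\cdot,C)$ against the normalized surface measure on $\mathbb{S}^{3}$) yields an average value of $24\,h_{0}$, where $h_{0}$ is the zeroth Gegenbauer coefficient of $h$. Hence $\mathcal{Q}_{h}(C)\le 24\,h_{0}\le \mathcal{R}_{h}(C)$. Taking $\sup_{C}$ on the left and $\inf_{C}$ on the right gives $\mathcal{Q}^{(h)}(4,5,24)\le \mathcal{R}^{(h)}(4,5,24)$.

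\textbf{Step 3 (Upper bound).} Invoke Theorem~\ref{PUUB} with $s=s^{*}\approx 0.794$. Since $s_{5,24}\le s^{*}$ by \cite{St}, every $5$-design of $24$ points admits a point $x^{*}$ with $T(x^{*},C)\subset[-1,s^{*}]$. The Gram--Schmidt procedure relative to $d\nu^{s^{*},1}$ carried out explicitly in Example~\ref{Upper24cell} produces the monic orthogonal polynomials up to degree $2$, whose roots---together with $\beta_{0}=-1$ and $\beta_{3}=s^{*}$---are the quadrature nodes in \eqref{24s^*}; the weights \eqref{weights_beta} are positive and sum to $1$. Theorem~\ref{PUUB} therefore yields
\[ \mathcal{R}^{(h)}(4,5,24) \;\le\; \sum_{i=0}^{3} r_{i}\,h(\beta_{i}). \]
For $h={\rm v}(t)=1/(2(1-t))$, the leftmost quantity reduces to $(6-3\sqrt{2})+6+(6+3\sqrt{2})=18$, and the rightmost is the numerical value $19.0819$ computed in Example~\ref{Upper24cell}.

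\textbf{Main obstacle.} The only genuinely nontrivial step is rigorously transporting Theorem~\ref{PUUB} to give an upper bound on $\mathcal{R}^{(h)}$ rather than just $\mathcal{Q}^{(h)}$, because the Hermite interpolant $H_{5}(t;h,s^{*})$ majorizes $h$ only on $[-1,s^{*}]$, whereas \eqref{UpperB} nominally requires majorization on the entire interval $[-1,1]$. I would resolve this by either passing to the limit $s\to 1^{-}$ when $h$ is continuous at $1$ (so the nodes $\beta_{i}$ become the Levenshtein parameters of Corollary~\ref{PUUB-N}), or by specializing the infimum over designs to the $24$-cell $D_{4}$ and verifying from its symmetry that $\mathcal{R}_{h}(D_{4})$ is attained at a point $x^{*}$ whose inner products with $D_{4}$ all lie in $[-1,s^{*}]$, after which \eqref{upper-bound} yields the stated bound pointwise. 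The rest is routine substitution.
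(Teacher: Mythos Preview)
Your Steps~1--3 are exactly the paper's (implicit) argument: the corollary is simply the juxtaposition of the PULB from Theorem~\ref{PULB}/Example~\ref{Lower24cell} with the PUUB from Theorem~\ref{PUUB}/Example~\ref{Upper24cell}, and no separate proof is given.  Your averaging argument in Step~2 is a genuine addition: the inequality $\mathcal{Q}^{(h)}(4,5,24)\le\mathcal{R}^{(h)}(4,5,24)$ is \emph{not} the trivial $\min\le\max$ (it is $\sup_C\inf_x$ against $\inf_C\sup_x$), and your observation that both quantities lie on either side of the code--independent constant $24\,\gamma_4\int_{-1}^{1}h(t)\sqrt{1-t^{2}}\,dt$ is the clean way to see it.  The paper never justifies this middle inequality.

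The ``main obstacle'' you isolate is real, and in fact it exposes a slip in the corollary itself rather than a defect in your argument.  For the Newton potential $\mathrm{v}(t)=1/(2(1-t))$ one has $\mathcal{R}_{\mathrm{v}}(C)=\sup_{x}U_{\mathrm{v}}(x,C)=+\infty$ for \emph{every} code $C$ (let $x\to y\in C$), so $\mathcal{R}^{(\mathrm{v})}(4,5,24)=+\infty$ and the printed inequality $\mathcal{R}\le 19.0819$ cannot hold; indeed the paper's own convention immediately after \eqref{min-max} restricts the $\mathcal{R}$--quantities to potentials finite on $[-1,1]$.  Neither of your two proposed repairs rescues the Newton case: resolution~(a), passing to $s\to1^{-}$, is precisely Corollary~\ref{PUUB-N} and requires $h$ finite at $1$ (and in any event changes the nodes to $\{\pm1,\pm1/\sqrt{6}\}$ and the numerical value); resolution~(b) fails because $\mathcal{R}_{\mathrm{v}}(D_4)$ is realized only as $x$ approaches a vertex of $D_4$, where an inner product tends to $1>s^{*}$.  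The number $19.0819$ is correctly an upper bound on $\mathcal{Q}^{(h)}(4,5,24)$ via \eqref{PolarizationUUB}, and that is how it should be read.
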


\begin{figure}[htbp]
\centering
\includegraphics[width=3.5 in]{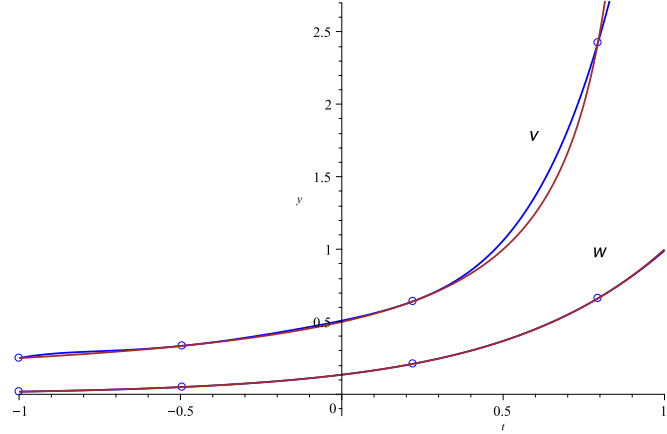}
\caption{$n=4$, $N=24$, $\tau=\tau_{4,24}=5$, $s=0.794$.  Graphs of potentials $h$, Hermite interpolants $H_5(t;h,0.794)$, and nodes $\beta_i$, $i=0,1,2,3$ for the Newton potential ${\rm v}(t)=\frac{1}{2(1-t)}$ (PUUB $=19.0819$) and the Gauss potential ${\rm w}(t)=e^{2(t-1)}$ (PUUB $=5.1675$).}
\label{fig:2}
\end{figure}
For continuous potentials we can apply the above framework for $s=1$ and illustrate Corollary \ref{PUUB-N}  (see Figure \ref{fig:3}). The corresponding nodes and weights are 
\[\beta_{0,3}=\mp 1,\quad \beta_{1,2}=\mp \frac{1}{\sqrt{6}},\quad r_{0,3}=\frac{1}{20}, \quad r_{1,2}=\frac{9}{20}.\]

\begin{figure}[htbp]
\centering
\includegraphics[width=3.5 in]{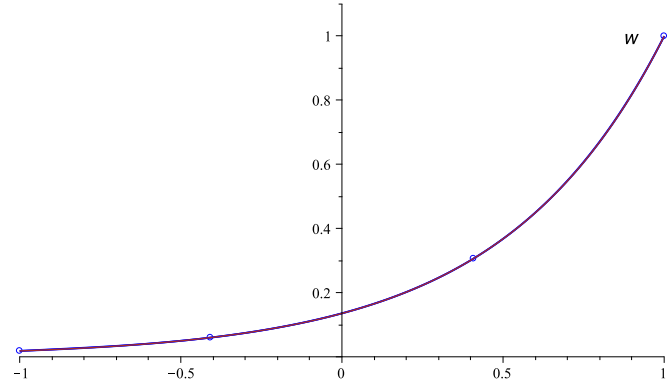}
\caption{$n=4$, $N=24$, $\tau=\tau_{4,24}=5$, $s=1$. Graphs of Gauss potential ${\rm w}(t)=e^{2(t-1)}$, Hermite interpolant $H_5(t;{\rm w},1)$, and nodes $\beta_i$, $i=0,1,2,3$ (PUUB $=5.17499$).}
\label{fig:3}
\end{figure}

In the case of Gauss potential ${\rm w}(t)=e^{2(t-1)}$ we compute the upper bound as $\mathcal{R}^{({\rm w})}(24)\leq 5.17499$.
\end{example}\hfill $\Box$

\section{Min-max polarization universal optimality of the $600$-cell}\label{Sec600}
 
 We now proceed with the min-max polarization problem for spherical codes of cardinality $N=120$ on $S^3$. As we said in the introduction Borodachov \cite{B} showed that strong sharp codes and sharp antipodal codes are universally optimal for the min-max polarization problem stated in \eqref{min-max}. The goal of this section is to show that the $600$-cell, \textcolor{black}{the unique $11$-design with $120$ points on $\mathbb{S}^3$ (see \cite{A2, Boy})}, satisfies the same universal optimality, which is in line with universal optimality of the sharp codes and the $600$-cell in terms of energy minimization for absolutely monotone potentials. 

We shall use the fact that the first nineteen moments of the $600$-cell except $M_{12}$ are all zero (see \eqref{Moments} for the definition of moments of a code). Recall that the eight distinct inner products of points in the $600$-cell are 
\[ B:=\left\{ -1, -\frac{1+\sqrt{5}}{4},-\frac{1}{2},\frac{1-\sqrt{5}}{4},0,\frac{\sqrt{5}-1}{4},\frac{1}{2}, \frac{1+\sqrt{5}}{4} \right\} =\{b_i\}_{i=1}^8.\]

Define $b_9:=1$. Given a potential $h(t)$ we denote with $g(t;h)$ the uniquely defined polynomial of degree $15$ that interpolates $h$ in $b_i$, $i=1,\dots,9$ and interpolates $h^\prime (t)$ in $b_i$, $i=2,\dots,8$. If $h(t)$ is \textcolor{black}{absolutely monotone of high enough order}, then the Hermite interpolation error formula implies $h(t)\leq g(t;h)$. However, when we expand $g(t;h)$ with respect to the Gegenbauer polynomials as a basis, its coefficient $g_{12}$ corresponding to $P_{12}^{(4)}(t)$ is not, in general, zero. To overcome this, we shall interpolate in the subspace
\[ {\mathcal P}:={\rm span} \{P_{0}^{(4)}(t), P_{1}^{(4)}(t),\dots,P_{11}^{(4)}(t), P_{13}^{(4)}(t), \dots ,P_{16}^{(4)}(t) \}=\Pi_{16}\cap \{ P_{12}^{(4)} \}^\perp.\]
Such an interpolation technique on a subspace was utilized by the authors in \cite{BDHSSMathComp} to derive an alternative proof of the universally optimal energy property of the $600$-cell. 

\begin{theorem}\label{600-cell} Let $h$ be a \textcolor{black}{continuous on $[-1,1]$ potential that is \textcolor{black}{absolutely monotone} of order $16$} and let $C_0 \subset \mathbb{S}^3$ denote the $600$-cell. The maximum of  $U_h(x,C_0)$ is attained at $y$ if and only if $y\in C_0$. Moreover, we have that for any other (i.e., non-isometric with $C_0$) code  $C\subset \mathbb{S}^3$ of cardinality $|C|=120$ the strict inequality $\mathcal{R}_h (C) > \mathcal{R}_h (C_0)$ holds. Consequently, 
\begin{equation}\label{600cellBound} \mathcal{R}^{(h)}(120)=h(b_1)+12\sum_{j=1}^4h(b_{2j})+20\sum_{i=0}^1 h(b_{4i+3})+30h(b_5) + h(b_9).\end{equation}
\end{theorem}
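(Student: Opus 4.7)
The plan is twofold: first, to identify $\mathcal{R}_h(C_0)$ via a carefully chosen interpolation polynomial in the subspace $\mathcal{P}$ adapted to the moment structure of the $600$-cell; second, to transfer this bound to arbitrary $120$-point codes via the Cohn--Kumar universal optimality of $C_0$ for energy.

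For the first part, I would construct $g(t;h)\in\mathcal{P}$ interpolating $h$ at the nine nodes $b_1,\ldots,b_9$ and matching $h'$ at the seven interior nodes $b_2,\ldots,b_8$, giving sixteen linear conditions in the $16$-dimensional space $\mathcal{P}$. Nonsingularity of the system follows from the fact that any nontrivial element of its kernel would be a polynomial of degree $\leq 16$ with a double zero at each interior $b_i$ and simple zeros at $\pm 1$, hence a scalar multiple of $p_0(t):=(t^2-1)\prod_{i=2}^{8}(t-b_i)^2$, which does not lie in $\mathcal{P}$ because its $P_{12}^{(4)}$-coefficient is nonzero (a direct computation). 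The key analytic step is to show $g\geq h$ on $[-1,1]$ with strict inequality off the nodes. The classical Hermite interpolant $H_{15}\in\Pi_{15}$ at the same data satisfies $h\leq H_{15}$ by the standard error formula (since $h^{(16)}\geq 0$ and $p_0\leq 0$ on $[-1,1]$); writing $g=H_{15}+c\,p_0$ with the scalar $c$ uniquely determined by $g_{12}=0$, one has $g(t)-h(t)=\bigl(c-h^{(16)}(\xi(t))/16!\bigr)\,p_0(t)$ for some $\xi(t)\in(-1,1)$, so it suffices to prove $c\leq 0$. This reduces, via a representation of order-$16$ absolutely monotone functions as nonnegative superpositions of elementary kernels $(z-t)^{-k}$ with $z>1$ and $k\leq 16$, to a kernel-by-kernel verification.

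With $g$ in hand, Proposition~\ref{prop23} applies since every nonzero Gegenbauer coefficient of $g$ has index in $I(C_0)\supseteq\{1,\ldots,19\}\setminus\{12\}$, yielding $U_g(x,C_0)=120\,g_0$ for every $x\in\mathbb{S}^3$. Evaluating at a vertex $x\in C_0$ and using the inner-product distribution of the $600$-cell gives
\begin{equation*}
120\,g_0 \;=\; g(b_1)+12\sum_{j=1}^{4} g(b_{2j})+20\sum_{i=0}^{1} g(b_{4i+3})+30\,g(b_5)+g(b_9),
\end{equation*}
which equals the right-hand side of \eqref{600cellBound} because $g(b_i)=h(b_i)$. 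Combined with $h\leq g$ this yields $U_h(x,C_0)\leq$ RHS for every $x\in\mathbb{S}^3$. If equality holds at some $x$, then the strict inequality $g>h$ off the nodes forces $T(x,C_0)\subset\{b_i\}_{i=1}^{9}$; applying the $11$-design identity to the Lagrange basis polynomial $\ell_9\in\Pi_8$ with nodes $\{b_i\}$ (so $\ell_9(1)=1$ and $\ell_9(b_j)=0$ for $j\le 8$) gives $|\{y\in C_0:x\cdot y=1\}|=120\,(\ell_9)_0=1$, where the last equality follows from testing the design identity at any vertex; hence $x\in C_0$. Thus $\mathcal{R}_h(C_0)$ equals the RHS of \eqref{600cellBound} and is attained precisely on $C_0$.

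For the second part, for any $C\subset\mathbb{S}^3$ with $|C|=120$ one has
\begin{equation*}
120\,\mathcal{R}_h(C)\;\geq\;\sum_{x\in C} U_h(x,C)\;=\;\sum_{x,y\in C} h(x\cdot y)\;\geq\;\sum_{x,y\in C_0} h(x\cdot y)\;=\;120\,\mathcal{R}_h(C_0),
\end{equation*}
where the second inequality is the Cohn--Kumar universal energy optimality of the $600$-cell \cite{CK} (shifted by the trivial constant $120\,h(1)$, and strict unless $C\cong C_0$) and the last equality uses that every vertex of $C_0$ attains the common value $\mathcal{R}_h(C_0)$; this gives \eqref{600cellBound} together with the strict inequality for $C\not\cong C_0$. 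I expect the principal obstacle to be the sign analysis of the scalar $c$ in the first part: the orthogonality constraint $g_{12}=0$ takes the interpolant off the classical Hermite one, and controlling the resulting correction requires exploiting the order-$16$ absolute monotonicity of $h$ in its full strength, along the lines of the interpolation-in-subspace technique developed in \cite{BDHSSMathComp}.
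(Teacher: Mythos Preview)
Your overall architecture matches the paper's: correct the degree-$15$ Hermite interpolant $H_{15}$ by a multiple $c\,p_0$ of the annihilator $p_0=(t^2-1)\prod_{i=2}^{8}(t-b_i)^2$ (this is the paper's $g_{16}$) so as to kill the $P_{12}^{(4)}$-coefficient, apply Proposition~\ref{prop23}, and finish with the Cohn--Kumar energy comparison. The only substantive divergence is precisely at the step you flag as the principal obstacle, the sign of $c$.

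There your plan has a genuine gap. The claimed representation of order-$16$ absolutely monotone potentials as nonnegative superpositions of kernels $(z-t)^{-k}$, $z>1$, $k\le 16$, is not available: each such kernel is absolutely monotone of \emph{all} orders, so nonnegative superpositions of them form a strictly smaller cone than $\{h: h^{(j)}\ge 0,\ 0\le j\le 16\}$, and a kernel-by-kernel check could not certify $c\le 0$ on the full class. The paper bypasses this entirely and proves $(H_{15})_{12}\ge 0$ (hence $c=-(H_{15})_{12}/(p_0)_{12}\le 0$) directly from the Newton form
\[
H_{15}(t)=\sum_{j=0}^{15} h[t_1,\ldots,t_{j+1}]\,g_j(t),\qquad g_j(t)=\prod_{i=1}^{j}(t-t_i),
\]
using that the divided differences are nonnegative by order-$16$ absolute monotonicity, that $(g_j)_{12}=0$ for $j\le 11$, and that an explicit computation gives $(g_{12})_{12}=2^{-12}$, $(g_{13})_{12}=(3+\sqrt{5})\,2^{-13}$, $(g_{14})_{12}=(15+3\sqrt{5})\,2^{-15}$, $(g_{15})_{12}=3\cdot 2^{-14}$, all positive. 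This is both rigorous for the stated hypothesis and far shorter than any integral-representation route; I would replace your proposed reduction with this argument.

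Your uniqueness-of-maxima argument via the Lagrange basis polynomial $\ell_9\in\Pi_8$ is a clean alternative to the paper's. The paper instead builds an auxiliary nonnegative polynomial $G\in\mathcal{P}$ vanishing on $\{b_1,\ldots,b_8\}$ and derives a contradiction from $G_0>0$; your approach reads the multiplicity of the inner product $1$ directly from the $11$-design identity and is slightly more transparent. The energy comparison for general $C$ is identical to the paper's.
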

\begin{remark}\label{600Cell Assumptions}
\textcolor{black}{Even though stated for absolutely monotone potentials, an inspection of the proofs of the universal optimality of the $600$-cell presented in \cite{CK} (see also \cite[Theorems 5.1 and 5.2]{BDHSSMathComp}) reveals that the optimality actually holds for all absolutely monotone potentials of order $16$, which explains the assumptions of the theorem.} 
\end{remark}

\begin{proof} (of Theorem \ref{600-cell})
Consider the multi-set 
\[T=\{ t_j \}_{j=1}^{16}:=\{b_1,b_2,b_2,b_3,b_3,\dots,b_8,b_8,b_9\}. \]
Then we say that the interpolation polynomial $g(t)$ described above interpolates the potential function $h(t)$ at the points of the multiset $T$. Denote the partial products 
\[ g_j(t):=(t-t_1)\cdots(t-t_j), \ j=1,\dots, 16,\ g_0(t):=1.\]
Then the Newton interpolating formula yields 
\begin{equation}\label{NewtonFormula600cell} g(t;h)=\sum_{j=0}^{15} h[t_1,\dots,t_{j+1}]g_j(t) ,\end{equation}
where $h[t_1,\dots,t_{j+1}]$ denotes the divided difference of $h$ in the listed nodes.
Note that all of the divided differences of $h(t)$ in \eqref{NewtonFormula600cell} are nonnegative because of the absolute monotonicity of order $16$ of $h(t)$. From the Hermite interpolation error formula we have that (recall that $b_9=1$)
\begin{equation}\label{HermiteError600cell}
h(t)-  g(t;h) = \frac{h^{(16)}(\xi)}{16!}(t-b_1)(t-b_2)^2 \dots (t-b_8)^2(t-b_9)\leq 0, \quad t\in [-1,1].
\nonumber
\end{equation}
Therefore $h(t)\leq g(t;h)$. 

We shall next establish that the $12$-th Gegenbauer coefficient of $g(t;h)$ is nonnegative. Clearly, the $12$-th Gegenbauer coefficients of the partial products $g_j (t)$, $j=0,1,\dots,11$ are zero, i.e. $(g_j)_{12}=0$ for $j=0,1,\dots,11$. We can compute directly that 
\[ (g_{12})_{12}=\frac{1}{2^{12}},\quad (g_{13})_{12}=\frac{3+\sqrt{5}}{2^{13}},\quad (g_{14})_{12}=\frac{15+3\sqrt{5}}{2^{15}},\quad (g_{15})_{12}=\frac{3}{2^{14}} .\]   
That $(g(t;h))_{12}\geq 0$ follows now easily from \eqref{NewtonFormula600cell}.

We shall also need that $(g_{16})_{12}=2^{-14}>0$. With this in mind consider the polynomial of degree $16$
\[H(t;h):=g(t;h)-\frac{(g(t;h))_{12}}{(g_{16})_{12}} g_{16}(t) = \sum_{i=0}^{16} H_{i}P_i^{(4)}(t).\]
Clearly, $H(t;h)\geq h(t)$ and its Gegenbauer coefficient $H_{12}=0$. Moreover, the only contact points $h(t)$ and $H(t;h)$ have are $b_i$, $i=1,\dots,9$. Therefore, using Proposition \ref{prop23} we get that for any $x\in \mathbb{S}^3$
\[ U_h(x,C_0)=\sum_{y\in C_0}h(x \cdot y)\leq \sum_{y\in C_0} H(x\cdot y;h) =U_H (x,C_0)=120H_0 =U_H (y,C_0),\]
where $y\in C_0$. However, as $H(b_i;h)=h(b_i)$, we have $U_H  (y,C_0)=U_h(y,C_0)$, which implies that $U_h(x,C_0)$ attains its maximum at a point of the code. Therefore, the min-max polarization bound \eqref{600cellBound} is attained at each of the $600$-cell vertices. 

Let $z\in \mathbb{S}^3\setminus C_0$ be a point where the maximum of $U_h(x,C_0)$ is attained. Recall that $T(z,C_0)$ denotes the collection of all inner products from $z$ to points from $C_0$. Select a fixed vertex $w\in C_0$. Since
\begin{eqnarray*}
U_h(z,C_0) &=& \sum_{y\in C_0}h( z\cdot y)\leq \sum_{y\in C_0} H(z\cdot y;h) \\
&=& 120H_0=\sum_{y\in C_0} H(w\cdot y;h)= \sum_{y\in C_0}h( w\cdot y) \\
&=& {\mathcal R}_h (C_0),
\end{eqnarray*}
we must have $T(z,C_0) \subset \{ b_1,\dots,b_8\}$. 
Let 
\[G(t):=g_{15}(t)-\frac{(g_{15}(t))_{12}}{(g_{16})_{12}} g_{16}(t) = \sum_{i=0}^{16} G_{i}P_i^{(4)}(t) .\]
Then $G(t)$ is annihilating polynomial on $T(z,C_0 )$, $G(t)\geq 0$, $G(t)\not\equiv 0$, and $G_{12}=0$. Therefore, applying Proposition \ref{prop23} we obtain
\[0=\sum_{y\in C_0}G(z\cdot y) =\sum_{i=0}^{16} G_i \sum_{y\in C_0}P_i^{(4)}(z\cdot y)=120G_0=\frac{240}{\pi}\int_{-1}^1 G(t)\sqrt{1-t^2}\,dt>0. \]
The resulting contradiction implies that the only maxima of $U_h(x,C_0)$ occur at points of $C_0$. 

The universal optimality of $C_0$ for min-max polarization follows closely the approach in  \cite[Theorem 2.5]{B}. Namely, we know from the universal optimality \textcolor{black}{(for absolutely monotone potential of order $16$)} of the $600$-cell for energy, that for any other code $C$
\[ \sum_{x\not=y \in C_0} h(x\cdot y) \leq  \sum_{x\not=y \in C} h(x\cdot y) \]
Therefore, with fixed $y\in C_0$ we have
\begin{align} \label{PUUB600}{\mathcal R}_{h}(C_0)&=U_h(y,C_0)=\frac{1}{120}\sum_{y\in C_0} U_h(y,C_0)=\frac{1}{120}\sum_{x\not=y \in C_0} h(x\cdot y)+h(1)\nonumber \\
&\leq \frac{1}{120}\sum_{x\not=y \in C} h(x\cdot y)+h(1)=\frac{1}{120}\sum_{x\in C}U_h(x,C)\leq {\mathcal R}_{h} (C).
\end{align}
Observe that equality for the energy in \eqref{PUUB600} holds if and only if $C=C_0$. This concludes the proof. 
\end {proof}
 
\section{Some spherical designs with optimal max-min polarization}\label{max-min attained}

We note that the max-min polarization (also referred to as maximal polarization) is established in only few cases. In this section we shall combine our techniques from Sections \ref{LowerBound} and \ref{UpperBound} to derive the max-min polarization in some special examples, such as when $N\leq n$, when $N=n+1$, and when $N=2n$. We note also that in these case our bounds are attained. 
\begin{example} \label{BHS14_2_1} First, we will utilize our PULB and PUUB bounds to provide an alternative proof of \cite[Proposition 14.2.1]{BHS}, namely that for a potential $h(t)$ with positive first and second derivatives, the maximal polarization for $2\leq N\leq n$ occurs for spherical $1$-designs. We note that proofs of Theorems \ref{PULB} and \ref{PUUB} and Corollaries \ref{PULB-N} and \ref{PUUB-N} can be modified in this case (with $\tau=1$). The inequalities \eqref{PolarizationULB} and \eqref{PULB_N} yield
\[ \mathcal{Q}^{(h)}(N)\geq \mathcal{Q}^{(h)}(n,1,N)\geq Nh(0),\]
obtained by using the optimal (sub-potential) linear polynomial $f(t)=h^\prime (0) t + h(0)$. Indeed, for any code $C\subset \mathbb{S}^{n-1}$ we have
\[U_h(x,C)=\sum_{y\in C} h(x\cdot y )\geq \sum_{y\in C} f( x\cdot y )=h^\prime (0)\sum_{y\in C} (x\cdot y) +Nh(0).\]
\begin{figure}[htbp]
\centering
\includegraphics[width=3.5 in]{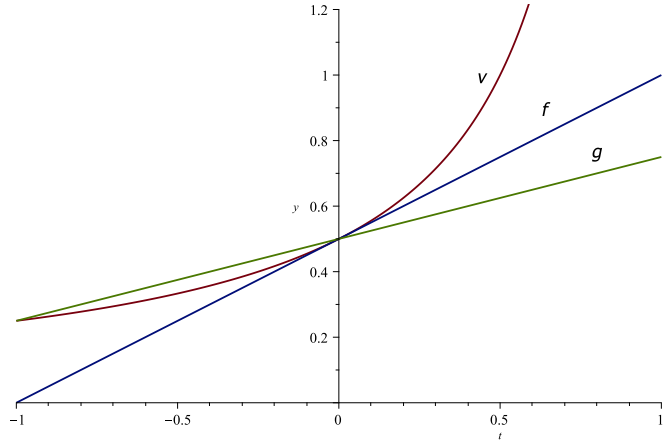}
\caption{Graphs of Newton potential ${\rm v}(t)=\frac{1}{2(1-t)},\ f(t)=t/2+1/2$, and $g(t)=t/4+1/2$.}
\label{fig:7}
\end{figure}
On the other hand any code $C\subset \mathbb{S}^{n-1}$ with $|C|=N\leq n$ points will be contained in a hemisphere and there will be points $x$ such that $T(x,C)\in[-1,0]$. With $g(t):=(h(0)-h(-1))t+h(0)$ (see Figure \ref{fig:7}) we have $g(t)\le h(t)$ for $t\in [-1,0]$ and so
\begin{align*} Q_h(C) &\leq \min_{\{x: T(x,C)\subset [-1,0]\} } U_h(x,C)\leq \min_{\{x: T(x,C)\subset [-1,0]\} }U_g(x,C)\\
&=  Nh(0) +(h(0)-h(-1))\min_{\{x: T(x,C)\subset [-1,0]\} } \sum_{y\in C} ( x \cdot y) \leq Nh(0).
\end{align*}
Thus, $ \mathcal{Q}^{(h)}(N)\leq Nh(0)$, which shows that equality holds. The latter is only possible when $ \mathcal{Q}^{(h)}(N)= \mathcal{Q}^{(h)}(n,1,N)$, or the code is a $1$-design.\hfill $\Box$
\end{example}

\begin{example}\label{simplex}
We should note that the maximal polarization property of the regular simplex is quite a complex problem that was only recently established by Su \cite{Su} for the case $n=3$ and for general $n$ by Borodachov in \cite{B2} \textcolor{black}{(see also \cite{NR2})}. We shall use our bounds to derive an alternative proof of this fact. Denote with $C_0$ the regular simplex on $\mathbb{S}^{n-1}$. We note that the regular simplex is a $2$-design. We also need the fact that the best covering radius for $n+1$ points on $\mathbb{S}^{n-1}$ is obtained when the points form a regular simplex (\cite[Theorem 6.5.1, p. 189]{BoJr}). In our notation given in \eqref{CoveringRadius} and \eqref{CoveringRadiusDes} we have that
\[s_N=s_{n+1}=s_{2,N}=\frac{1}{n}.\]
Given an absolutely monotone potential $h(t)$ of order $3$, the nodes and weights quadrature parameters in Theorem \ref{PULB} are computed to be $\alpha_0=-1,\ \alpha_1=1/n$, and $\rho_0=1/(n+1), \rho_1 =n/(n+1)$, respectively, which yields that 
\begin{equation}\label{simplex_bound1} \mathcal{Q}^{(h)}(n,2,n+1)\geq (n+1)\left[ \rho_0h(\alpha_0)+\rho_1(\alpha_1)\right]=h(-1)+nh\left( \frac{1}{n} \right).\end{equation}

On the other hand, since $s_{n+1}=1/n$, for any code $C$ on $\mathbb{S}^{n-1}$ of cardinality $|C|=n+1$ there exists $x$ such that $T(x,C)\subset [-1,s_{n+1}]$. In fact, the linear polynomial interpolating $h(t)$ at $t=-1, 1/n$
\begin{eqnarray*}
g(t) &:=& \frac{n(h(1/n)-h(-1))}{n+1}\left(t-\frac{1}{n}\right)+h\left(\frac{1}{n}\right) \\
&=& \frac{n(h(1/n)-h(-1))}{n+1}t +\frac{1}{n+1}h(-1)+\frac{n}{n+1}h\left(\frac{1}{n}\right)
\end{eqnarray*} 
stays above $h(t)$ on $[-1,1/n]$, and hence for any code $C\subset \mathbb{S}^{n-1}$, $|C|=n+1$ we have that 
\begin{eqnarray} \label{SimplexUpper}Q_h(C) &\leq& \min_{\{x: T(x,C)\subset [-1,1/n]\} } U_h(x,C)\leq \min_{\{x: T(x,C)\subset [-1,1/n]\} }U_g(x,C)\nonumber\\
&=& h(-1)+nh(1/n)+ n(h(1/n)-h(-1)) \left( \min_{\{x: T(x,C)\subset [-1,1/n]\} } ( x\cdot y_C ) \right) \\
&\leq& h(-1)+nh(1/n), \nonumber
\end{eqnarray}
where $y_C=(\sum_{y\in C}y)/(n+1)$ denotes the centroid of $C$. In the last inequality we used Lemma  \ref{SimplexBound} below. Therefore, when $h$ is absolutely monotone potential of order $3$, $\mathcal{Q}^{(h)}(n+1)=h(-1)+nh(1/n)$ with equality attained only for the regular simplex and minima located at its antipodal points (see \cite[Theorem 2.1]{B2}).
\end{example}


\begin{lemma}\label{SimplexBound}
For any spherical code $C\subset \mathbb{S}^{n-1}$, $|C|=n+1$, the bound 
\[\min_{\{x: T(x,C)\subset [-1,1/n]\} }  ( x\cdot y_C ) \leq 0\] holds.
\end{lemma}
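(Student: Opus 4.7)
\emph{Plan.} I would prove the lemma by splitting the argument according to the position of the origin relative to $\mathrm{conv}(C)$.

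When $y_C = 0$ (which happens, for instance, when $C$ is a $1$-design), the feasible set $K := \{x \in \mathbb{S}^{n-1} : T(x,C) \subset [-1, 1/n]\}$ is nonempty since $s_C \le s_{n+1} = 1/n$, and every $x \in K$ trivially satisfies $x \cdot y_C = 0 \le 0$. Assume henceforth $y_C \ne 0$. If $0 \notin \mathrm{conv}(C)$, strict separation gives a unit vector $v$ with $v \cdot y_i > 0$ for every $i$; then $x := -v$ lies in $K$ (since $x \cdot y_i < 0 \le 1/n$) and $x \cdot y_C = -\tfrac{1}{n+1}\sum_i v \cdot y_i < 0$, as required.

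The main case is $0 \in \mathrm{int}\,\mathrm{conv}(C)$, where the bounded polytope $P := \{x \in \mathbb{R}^n : x \cdot y_i \le 1/n \text{ for all } i\}$ has (under generic position of $\{y_i\}$) exactly $n+1$ vertices $x^{(j)}$, each uniquely determined by $x^{(j)} \cdot y_i = 1/n$ for $i \ne j$. A direct computation gives the key identity
\[
x^{(j)} \cdot y_C \;=\; \frac{x^{(j)} \cdot y_j + 1}{n+1}.
\]
Let $\lambda \in \mathbb{R}^{n+1}$ be the nonzero vector, unique up to scaling, with $\sum_i \lambda_i y_i = 0$; positivity of all $\lambda_i$ is precisely the condition $0 \in \mathrm{int}\,\mathrm{conv}(C)$. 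Writing $Y^T x^{(j)} = \tfrac{1}{n}\mathbf{1} + v_j e_j$ and imposing the solvability condition $\lambda^T(\tfrac{1}{n}\mathbf{1} + v_j e_j) = 0$ forces $x^{(j)} \cdot y_j = \tfrac{1}{n} - \tfrac{\sum_i \lambda_i}{n \lambda_j}$. Summing over $j$ and invoking the Cauchy--Schwarz inequality
\[
\Bigl(\sum_i \lambda_i\Bigr) \Bigl(\sum_j \tfrac{1}{\lambda_j}\Bigr) \;\ge\; (n+1)^2
\]
yields $\sum_j x^{(j)} \cdot y_C = \bigl((n+1)^2 - \Lambda M\bigr)/(n(n+1)) \le 0$, where $\Lambda = \sum \lambda_i$ and $M = \sum 1/\lambda_j$. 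Hence some vertex $x^{(j^*)}$ satisfies $x^{(j^*)} \cdot y_C \le 0$, and the identity then gives $x^{(j^*)} \cdot y_{j^*} \le -1$, which by Cauchy--Schwarz forces $\|x^{(j^*)}\| \ge 1$. Normalizing $\widetilde x := x^{(j^*)}/\|x^{(j^*)}\|$, we obtain a unit vector with $\widetilde x \cdot y_i = (1/n)/\|x^{(j^*)}\| \le 1/n$ for $i \ne j^*$ and $\widetilde x \cdot y_{j^*} < 0 \le 1/n$, so $\widetilde x \in K$; moreover $\widetilde x \cdot y_C \le 0$ as desired.

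The remaining sub-case $y_C \ne 0$ with $0 \in \partial\,\mathrm{conv}(C)$ is handled by noting that there exists a nonzero direction $v$ with $v \cdot y_i \ge 0$ for all $i$: if $v \cdot y_C > 0$ the separation argument above applies directly, while if $v \cdot y_C = 0$ then every $v \cdot y_i = 0$, placing $C$ on the lower-dimensional sphere $v^\perp \cap \mathbb{S}^{n-1}$, which reduces the problem by one dimension and permits induction on $n$. The main technical obstacle I anticipate is making the Cauchy--Schwarz step rigorous and ensuring the vertex-based argument extends to non-generic configurations (e.g.\ when $\{y_i\}_{i \ne j}$ is linearly dependent for some $j$ and the vertex $x^{(j)}$ is not uniquely determined); a small perturbation of $C$, followed by a limiting argument, should suffice.
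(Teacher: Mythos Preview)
Your argument is correct in substance and takes a genuinely different route from the paper. Where the paper chooses one specific facet --- the one opposite the vertex with smallest barycentric coordinate $b_j \le \tfrac{1}{n+1}$ --- and then invokes a geometric lemma of Borodachov relating the distances $r_0,a_0$, you instead parametrise all $n+1$ dual vertices $x^{(j)}$ algebraically and use the AM--HM inequality $\Lambda M \ge (n+1)^2$ to force $\sum_j x^{(j)}\cdot y_C \le 0$. Your approach is self-contained (no external lemma) and makes the mechanism transparent; the paper's is shorter and more geometric. It is worth noting that your averaging step is in fact unnecessary: taking $j^*$ with $\lambda_{j^*}=\min_j\lambda_j\le \Lambda/(n+1)$ already gives $x^{(j^*)}\cdot y_{j^*}=\tfrac{1}{n}-\tfrac{\Lambda}{n\lambda_{j^*}}\le -1$ directly, and this is exactly the paper's choice in your language. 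Also, once $0\in\mathrm{int}\,\mathrm{conv}(C)$, the sets $\{y_i:i\ne j\}$ are automatically linearly independent (the dependency space is one-dimensional and spanned by a vector with all $\lambda_i>0$), so your perturbation concern is moot in the main case.

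There is one genuine slip: in the sub-case $0\in\partial\,\mathrm{conv}(C)$ with $v\cdot y_C=0$, you conclude $C\subset v^\perp\cap\mathbb{S}^{n-1}\cong\mathbb{S}^{n-2}$ and propose induction on $n$. But the inductive hypothesis is about $n$ points on $\mathbb{S}^{n-2}$, not $n+1$, so the induction does not apply as stated. The fix is immediate: simply take $\widetilde x = v/\|v\|$, which satisfies $\widetilde x\cdot y_i=0\le 1/n$ for all $i$ and $\widetilde x\cdot y_C=0$.
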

\begin{proof} 
If the code $C=\{ x_0,\dots, x_n\}$ is contained in a hyperspace, then clearly there is a point $x\in \mathbb{S}^{n-1}$, such that $( x\cdot x_i )\leq 0$ for all $i=0,\dots,n$ and the conclusion is obvious. 

Hence, we may assume that $C$ is in general position (not on a hyperplane) and that $0\in {\rm int}(C)$. Let $\{b_0, \dots,b_n\}$ be the origin's barycentric coordinates, namely the unique positive numbers such that $0=b_0x_0+\dots+b_nx_n$, and $b_0+\dots+b_n=1$. Clearly, there is a barycentric coordinate, say $b_0\leq 1/(n+1)$. Let $H_0$ be the hyperplane determined by $\{ x_1,\dots,x_n\}$, $r_0:={\rm dist}(0,H)$, and $a_0:={\rm dist}(x_0,H)$. Since $0$ is in the interior of the convex hull of $C$ we have that $x_0$ and $0$ are in the same half-space relative to $H_0$. Then  \cite[Lemma 3.2]{B2} yields that $nr_0\leq a_0$ and $r_0\leq 1/n$.

Without loss of generality we may assume that $H$ is orthogonal to the South-North polar axis of $\mathbb{S}^{n-1}$ and located in the "Southern hemisphere". Let $\widetilde{x}$ be the South pole. Then $( \widetilde{x}\cdot x_i )=r_0\leq 1/n$ for $i=1,\dots,n$ and  $( \widetilde{x}\cdot x_0 )=-a_0$. Therefore, $T(\widetilde{x},C) \subseteq [-1,1/n]$ and $( \widetilde{x}\cdot y_C )\leq 0$, which concludes the proof.
\end{proof}

\begin{example}\label{simplex_Negative}
In this example we utilize Theorem \ref{PULB_Negative} to show that the simplex is universally optimal configuration with respect to potentials $h$ that are absolutely monotone of order $2$ with $h^{(3)}(t)\leq 0$, $t\in (-1,1)$.
This is a counterpart to \cite[Theorem 2.2]{B2}. The quadrature nodes in this case are $\alpha_1=-1/n$ and $\alpha_2=1$, while the weights are $\rho_1=n/(n+1)$ and $\rho_2=1/(n+1)$. The PULB  \eqref{PolarizationULB_Negative} yields
\begin{equation}\label{simplex_bound2} \mathcal{Q}^{(h)}(n,2,n+1)\geq (n+1)\left[ \rho_0 h(\alpha_0)+\rho_1(\alpha_1)\right]=h(1)+nh\left( -\frac{1}{n} \right).\end{equation}

To derive matching \eqref{simplex_bound2} upper bound, we first consider codes $C$, $|C|=n+1$, that are situated in one half-sphere, say of nonpositive altitude, we can use the monotonicity and convexity to estimate the value of the potential $h$ at the North pole $\widetilde{x}$ to derive that 
\[ Q_h(C)\leq U_h (\widetilde{x},C)\leq (n+1)h(0)\leq nh(-1/n)+h(1).\]

Therefore, we can restrict ourselves to simplexes  $C$ that contain the origin in the interior of their convex hull. Suppose there is a facet determined by $n$ vertices, say $\{x_1,\dots,x_n\}$, whose distance to the origin is $r_0>1/n$. Orienting the facet to be perpendicular to the South-North polar axis at an altitude $-r_0$ ($\widetilde{x}$ being the North pole again) we obtain the estimate
\[Q_h(C)\leq U_h(\widetilde{x},C)=nh(-r_0)+h(\widetilde{x}\cdot x_0)\leq nh(-1/n)+h(1).\]

Therefore, we need only to consider codes $C$, $|C|=n+1$, such that {\em all} $n+1$ facets are within $1/n$ distance from the origin and that the origin is in the convex hull of $C$. Let $b_0,\dots, b_n$ be the barycentric coordinates of the origin. As it is  they are all positive and add up to $1$, at least one, say $b_0\geq 1/(n+1)$. As done above, orient the facet $\{ x_1,\dots,x_n \}$ to be perpendicular to the South-North polar axis at altitude $- r_0$ and let $\widetilde{x}$ be the North pole. Then $T(\widetilde{x},C)\subset [-1/n,1]$. Since $b_0 x_0+\dots +b_n x_n=0$, we  have 
\[ 0=b_0 \widetilde{x}\cdot x_0 \cdot +\dots + b_n \widetilde{x}\cdot x_n = b_0\widetilde{x}\cdot x_0 -(1-b_0)r_0\geq \frac{1}{n+1}\widetilde{x}\cdot x_0- \frac{n}{n+1}r_0 =  \widetilde{x}\cdot y_C,\] 
where $y_C$ is the centroid of $C$.
Denote the linear polynomial that interpolates $h(t)$ at $t=-1/n$ and $t=1$ with 
\[g^*(t) := \frac{n(h(1)-h(-1/n))}{n+1}\left(t+\frac{1}{n}\right)+h\left(-\frac{1}{n}\right).\]  
We modify the sequence of inequalities \eqref{SimplexUpper} as follows
\begin{eqnarray} \label{SimplexUpper2}Q_h(C) &\leq&U_h(\widetilde{x},C)\leq U_{g^*}(\widetilde{x},C)\nonumber\\
&=& h(1)+nh(-1/n)+ n(h(1)-h(-1/n)) ( \widetilde{x}\cdot y_C ) \\
&\leq& h(1)+nh(-1/n)\nonumber
\end{eqnarray}
Equality may only hold if $b_i=1/(n+1)$, $i=0,\dots,n$ and if all inner products between distinct points in the code are equal to $-1/n$, which determines the simplex uniquely. 
\hfill $\Box$
\end{example}

\begin{example} \label{Cross} We next consider the cross-polytope $C_{2n}$ of $2n$ points on $\mathbb{S}^{n-1}$ (see \cite{NR2} for the case of power potentials). This is a $3$-design and the bound \eqref{PULB-N} yields that 
\[ \mathcal{Q}^{(h)}(n,3,2n)\geq 2n \left[ \rho_0h(\alpha_0)+\rho_1(\alpha_1)\right]= nh\left( -\frac{1}{\sqrt{n}} \right) +  
n h \left( \frac{1}{\sqrt{n}} \right).\]
Indeed, the corresponding Gegenbauer polynomial $P_2^{(n)}(t)=(nt^2-1)/(n-1)$, has zeros $\pm 1/\sqrt{n}$ and the weights are determined as $\rho_0=\rho_1=1/2$. The optimal cubic polynomial in the class $\mathcal{L}(n,3,h)$ is the Hermite interpolant to $h(t)$ at $\pm 1/\sqrt{n}$. For the $2^n$ points $\widetilde{x}=(\pm 1/\sqrt{n},\dots,\pm 1/\sqrt{n})$ we have that $ T(\widetilde{x},C_{2n})=\pm 1/\sqrt{n}$ and 
\begin{equation}\label{CrossPoly0}
\mathcal{Q}_h(C_{2n})=\min_{x\in \mathbb{S}^{n-1}} U_h(x,C_{2n})= U_h(\widetilde{x},C_{2n}) = n h(-1/\sqrt{n})+n h(1/\sqrt{n}).\end{equation}
Corollary \ref{PULB-N} yields that 
\begin{equation}\label{CrossPoly1}
\mathcal{Q}^{(h)}(2n)\geq  n h(-1/\sqrt{n})+n h(1/\sqrt{n}).
\end{equation}

For an upper bound we shall make use of the quadratic polynomial  $g(t)$ that interpolates the potential function and its derivative at $-1/\sqrt{n}$ and (only) interpolates the function at $1/\sqrt{n}$. As in the proof of Theorem \ref{PUUB}, one derives that $g(t)\in \mathcal{A}(n,1/\sqrt{n},3,h)$ (note that in Theorem \ref{PUUB} we considered the polynomial of degree $3$ that additionally interpolates $h(t)$ at $-1$). We find that 
 \[ g(t)=h\left(-\frac{1}{\sqrt{n}}\right)\frac{(\sqrt{n}t+3)(1-\sqrt{n}t)}{4}+h^\prime \left(-\frac{1}{\sqrt{n}}\right)\frac{1-nt^2}{2\sqrt{n}}+h\left(\frac{1}{\sqrt{n}}\right)\left( \frac{\sqrt{n}t+1}{2}\right)^2.\]
 Its Gegenbauer expansion is 
 \begin{align} \label{CrossPolyGeg}
 g(t)&=\frac{n-1}{4}\left[ h\left(\frac{1}{\sqrt{n}}\right)-h\left(-\frac{1}{\sqrt{n}}\right)-\frac{2}{\sqrt{n}}h^\prime\left(-\frac{1}{\sqrt{n}}\right)\right]P_2^{(n)}(t)\nonumber\\
 &+\frac{\sqrt{n}}{2}
 \left[ h\left(\frac{1}{\sqrt{n}}\right)-h\left(-\frac{1}{\sqrt{n}}\right)\right] P_1^{(n)}(t) \\ 
 &+\frac{1}{2}h\left( -\frac{1}{\sqrt{n}} \right)  +  \frac{1}{2} h \left( \frac{1}{\sqrt{n}} \right)
= g_2 P_2^{(n)}(t)+ g_1P_1^{(n)}(t)+g_0. \nonumber
 \end{align}
Note that for increasing strictly convex functions $h(t)$ the Gegenbauer coefficients $g_1$ and $g_2$ are positive. 

The best covering radius for $2n$ points on $S^{n-1}$ is known for $n=2,3,4$ (see \cite[Section 6.7]{BoJr} for $n=4$), and for $n\geq 5$ it is conjectured \cite[Conjecture 6.7.3]{BoJr} that 
\[ s_{2n} = s_{2,2n} = s_{3,2n} = \frac{1}{\sqrt{n}} .\]
For the rest of this \textcolor{black}{proof} we shall assume this conjecture to be true. Let $C\subset \mathbb{S}^{n-1}$ be an arbitrary code with $|C|=2n$. The facets of $C$ form spherical caps (with no points of $C$ in the interior of the spherical cap), let us select one with a largest diameter and let $\widetilde{x}$ be its center. From the best covering radius conjecture we have that $T(\widetilde{x},C)\subset[-1,1/\sqrt{n}]$. 
\begin{align*} Q_h(C) &\leq \min_{\{x: T(x,C)\subset [-1,1/\sqrt{n}]\} } U_h(x,C)\leq \min_{\{x: T(x,C)\subset [-1,1/\sqrt{n}]\} }U_g(x,C).
\end{align*}
If $C$ is a spherical $2$-design, then from \eqref{CrossPolyGeg} we derive that 
\[\min_{\{x: T(x,C)\subset [-1,1/\sqrt{n}]\} }U_g(x,C)=nh\left( -\frac{1}{\sqrt{n}} \right) +  
n h \left( \frac{1}{\sqrt{n}} \right),\]
which shows that the cross-polytope solves the maximal polarization problem among all spherical $2$-designs, which we formulate as a separate proposition.\hfill $\Box$
\end{example}

\begin{proposition}\label{CrossPolytope} For all $n=2,3,4$ the cross polytope has optimal max-min polarization among all spherical $2$-designs. Should \cite[Conjecture 6.7.3]{BoJr} hold {\rm(}or $s_{2n}=1/\sqrt{n}${\rm)}, the proposition is true for all $n$. 
\end{proposition}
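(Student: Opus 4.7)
The plan is to assemble the ingredients developed in Example \ref{Cross} into matching upper and lower bounds for $\mathcal{Q}_h(C)$ over spherical $2$-designs $C$ with $|C|=2n$.

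For the \emph{lower bound}, I would invoke Theorem \ref{PULB} with $\tau=3$, $k=2$, $\epsilon=0$, applied to the cross-polytope $C_{2n}=\{\pm e_1,\dots,\pm e_n\}$, which is a spherical $3$-design and hence in particular a $2$-design. The Gegenbauer polynomial $P_2^{(n)}(t)=(nt^2-1)/(n-1)$ has zeros $\pm 1/\sqrt{n}$ with equal weights $\rho_0=\rho_1=1/2$, so
\[ \mathcal{Q}^{(h)}(n,3,2n) \geq n h(-1/\sqrt{n}) + n h(1/\sqrt{n}). \]
Equality is attained by $C_{2n}$: at any of the $2^n$ points $\tilde x=(\pm 1/\sqrt{n},\dots,\pm 1/\sqrt{n})$ the inner products with the vertices are exactly $\pm 1/\sqrt{n}$, each with multiplicity $n$, giving the right-hand side above.

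For the \emph{upper bound} valid for every $2$-design $C$ with $|C|=2n$, I would use the quadratic upper admissible polynomial from Example \ref{Cross}: let $g(t)$ be the polynomial of degree at most $2$ that interpolates $h$ at $-1/\sqrt{n}$ in the Hermite sense (matching value and derivative) and matches the value of $h$ at $+1/\sqrt{n}$. The Hermite error formula
\[ h(t)-g(t)=\frac{h^{(3)}(\xi)}{6}\left(t+\tfrac{1}{\sqrt{n}}\right)^{\!2}\left(t-\tfrac{1}{\sqrt{n}}\right) \]
is $\leq 0$ on $[-1,1/\sqrt{n}]$ whenever $h^{(3)}\geq 0$, so $g\in\mathcal{A}(n,1/\sqrt{n},3,h)$. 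Since $\deg g\leq 2$ and $C$ is a $2$-design, the identity \eqref{defin_f} gives $U_g(x,C)=2n\,g_0=n h(-1/\sqrt{n})+n h(1/\sqrt{n})$ for \emph{every} $x\in\mathbb{S}^{n-1}$, matching the lower bound.

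What remains is to exhibit a point $\tilde x\in\mathbb{S}^{n-1}$ with $T(\tilde x,C)\subset[-1,1/\sqrt{n}]$, for then Proposition \ref{prop42} yields
\[ \mathcal{Q}_h(C) \leq U_h(\tilde x,C) \leq U_g(\tilde x,C) = n h(-1/\sqrt{n})+n h(1/\sqrt{n}). \]
Such $\tilde x$ exists precisely when $s_C\leq 1/\sqrt{n}$, which holds for every $2n$-point configuration on $\mathbb{S}^{n-1}$ if and only if $s_{2n}\leq 1/\sqrt{n}$. In dimensions $n=2,3,4$ the equality $s_{2n}=1/\sqrt{n}$ is classical (see \cite[Section 6.7]{BoJr}), yielding the unconditional part of the proposition. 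For $n\geq 5$ the same equality is \cite[Conjecture 6.7.3]{BoJr}, and this covering-radius assertion is the sole obstacle to an unconditional statement; all other steps follow mechanically from the universal bound machinery of Sections \ref{LowerBound} and \ref{UpperBound}.
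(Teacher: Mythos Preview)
Your proposal is correct and follows essentially the same approach as the paper: the lower bound via Theorem \ref{PULB} applied to the cross-polytope, the quadratic interpolant $g\in\mathcal{A}(n,1/\sqrt{n},3,h)$ for the upper bound, the observation that $\deg g\le 2$ makes $U_g(x,C)=2n g_0$ constant on any $2$-design, and the covering-radius fact $s_{2n}=1/\sqrt{n}$ (known for $n=2,3,4$, conjectured otherwise) to guarantee a point $\tilde x$ with $T(\tilde x,C)\subset[-1,1/\sqrt{n}]$.
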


We are able to prove that the cross polytope has optimal max-min polarization among the class of {\em centered codes}, namely codes for which $T(\widetilde{x},C)\subset[-1/\sqrt{n},1/\sqrt{n}]$ for some $\widetilde{x}\in \mathbb{S}^{n-1}$.

\begin{proposition}\label{centralized}
For all $n\in \mathbb{N}$ the cross polytope is optimal for the max-min polarization problem among all centered codes of cardinality $2n$.  Moreover, for any optimal centered code $C^*$, there exist a point $\widetilde{x} \in \mathbb{S}^{n-1}$ such that $T(\widetilde{x},C^*)=\{-1/\sqrt{n},1/\sqrt{n}\}$ with each inner product value having equal multiplicity $n$.
\end{proposition}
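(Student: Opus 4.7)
The plan is to produce an upper bound on $\mathcal{Q}_h(C)$ for every centered code $C$ of cardinality $2n$ matching the value $nh(-1/\sqrt{n}) + nh(1/\sqrt{n})$ already achieved by the cross-polytope in \eqref{CrossPoly0}. First I would exploit the symmetry of the centered condition: if $T(\widetilde{x}, C) \subset [-1/\sqrt{n}, 1/\sqrt{n}]$, then also $T(-\widetilde{x}, C) \subset [-1/\sqrt{n}, 1/\sqrt{n}]$ since this interval is symmetric about $0$. Replacing $\widetilde{x}$ by $-\widetilde{x}$ if necessary, I may assume without loss of generality that $\sum_{y \in C}\widetilde{x}\cdot y \leq 0$.

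Next I would introduce the linear secant $L(t)$ of $h$ interpolating at $\pm 1/\sqrt{n}$, whose constant term equals $\tfrac{1}{2}(h(1/\sqrt{n}) + h(-1/\sqrt{n}))$ and whose slope equals $\tfrac{\sqrt{n}}{2}(h(1/\sqrt{n}) - h(-1/\sqrt{n}))$. By convexity of $h$ on $[-1/\sqrt{n}, 1/\sqrt{n}]$ (which follows from $h$ being absolutely monotone of order at least $2$) one has $L(t) \geq h(t)$ on this interval, and by monotonicity the slope is nonnegative. Combining these facts with the reduction from the previous paragraph yields
\[
\begin{aligned}
\mathcal{Q}_h(C) &\leq U_h(\widetilde{x},C) \leq U_L(\widetilde{x},C) \\
&= n\bigl(h(-1/\sqrt{n})+h(1/\sqrt{n})\bigr) + \tfrac{\sqrt{n}}{2}\bigl(h(1/\sqrt{n})-h(-1/\sqrt{n})\bigr)\sum_{y\in C}\widetilde{x}\cdot y,
\end{aligned}
\]
and since the last term is nonpositive, the desired upper bound $\mathcal{Q}_h(C) \leq n(h(-1/\sqrt{n}) + h(1/\sqrt{n}))$ follows, matching \eqref{CrossPoly0}.

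For the ``moreover'' assertion I would assume $h$ is strictly convex and strictly increasing and chase equality through the two inequalities above for an optimal $C^*$ with witness $\widetilde{x}$. Strict convexity gives $L > h$ strictly on the open interval $(-1/\sqrt{n}, 1/\sqrt{n})$, so equality in the first inequality forces $\widetilde{x}\cdot y \in \{-1/\sqrt{n},\, 1/\sqrt{n}\}$ for every $y \in C^*$. Strict monotonicity makes the slope of $L$ strictly positive, so equality in the second inequality forces $\sum_{y \in C^*}\widetilde{x}\cdot y = 0$. Writing $p$ for the number of $y \in C^*$ with $\widetilde{x}\cdot y = 1/\sqrt{n}$, these two conditions force $p/\sqrt{n} - (2n-p)/\sqrt{n} = 0$, i.e., $p = n$, establishing the multiplicity statement.

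The ingredients are modest, but the crucial observation is the antipodal symmetry $\widetilde{x} \mapsto -\widetilde{x}$ that allows one to control the sign of the first-moment term $\sum \widetilde{x}\cdot y$; without it, the linear secant would not suffice to reach the cross-polytope value, and one would be forced to employ a quadratic majorant as in Example \ref{Cross}, which works only when the $P_2^{(n)}$-coefficient vanishes over the code, i.e., when $C$ is a spherical $2$-design.
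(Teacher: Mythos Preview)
Your argument is correct and is in fact more economical than the paper's.  The paper carries along the full quadratic interpolant $g$ from Example~\ref{Cross}: writing $U_g(\pm\widetilde{x},C)$ it uses the $\pm\widetilde{x}$ symmetry to kill the $P_1^{(n)}$-contribution and then the centered hypothesis $|\widetilde{x}\cdot y|\le 1/\sqrt{n}$ to make the $P_2^{(n)}$-contribution $g_2\sum\frac{n(\widetilde{x}\cdot y)^2-1}{n-1}$ nonpositive.  You observe instead that on the restricted interval $[-1/\sqrt{n},1/\sqrt{n}]$ the chord $L$ already dominates $h$ by convexity alone, so the quadratic piece is superfluous; the same $\pm\widetilde{x}$ reduction then disposes of the sole remaining linear term.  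Your approach thus needs only $h'\ge 0$ and $h''\ge 0$, whereas the paper's quadratic additionally requires controlling the sign of $g_2$ (which it gets from strict convexity).  The equality analysis is essentially the same in both proofs.

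One small remark on your closing comment: the quadratic majorant in the paper's proof does \emph{not} require $C$ to be a $2$-design.  The centered hypothesis already forces $n(\widetilde{x}\cdot y)^2-1\le 0$ for every $y\in C$, so the $g_2$-term is nonpositive rather than zero; this is precisely what the paper exploits.  Your linear shortcut simply avoids the issue altogether.
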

\begin{proof}
Let $\widetilde{x}\in \mathbb{S}^{n-1}$ be such that $T(\widetilde{x},C)\subset[-1/\sqrt{n},1/\sqrt{n}]$. Observe, that $T(-\widetilde{x},C)\subset[-1/\sqrt{n},1/\sqrt{n}]$ as well. We will prove that for such centralized codes we have
\begin{equation}\label{CrossPoly2}
\mathcal{Q}_h (C)\leq  \min\left\{ U_h(-\widetilde{x},C),U_h(\widetilde{x},C)\right\}\leq n h(-1/\sqrt{n})+n h(1/\sqrt{n}).
\end{equation}
 Since $h(t)\leq g(t)$ on $[-1/\sqrt{n},1/\sqrt{n}]$, we have
\begin{equation}\label{CrossPoly3}
U_h(\widetilde{x},C)\leq U_g(\widetilde{x},C),\quad U_h(-\widetilde{x},C)\leq U_g(-\widetilde{x},C).\end{equation}
Using $P_1^{(n)}(t)=t$ and $P_2^{(n)}(t)=(nt^2-1)/(n-1)$, we compute
\begin{align} 
U_g(\widetilde{x},C)&=g_2\sum_{y\in C}P_2^{(n)}(\widetilde{x}\cdot y) +g_1\sum_{y\in C}P_1^{(n)}(\widetilde{x}\cdot y)+n h(-1/\sqrt{n})+n h(1/\sqrt{n})\nonumber \\
&=g_2\sum_{y\in C}\frac{n(\widetilde{x}\cdot y)^2-1}{n-1}+g_1\sum_{y\in C}(\widetilde{x}\cdot y)+n h(-1/\sqrt{n})+n h(1/\sqrt{n})\nonumber\\
&=g_2\sum_{y\in C}\frac{n(\widetilde{x}\cdot y)^2-1}{n-1}+(2n g_1)(\widetilde{x}\cdot y_C)+n h(-1/\sqrt{n})+n h(1/\sqrt{n}), \nonumber
\end{align}
where $y_C$ is the center of mass of $C$. Similarly,
\[U_g(-\widetilde{x},C)=g_2\sum_{y\in C}\frac{n(\widetilde{x}\cdot y)^2-1}{n-1}-(2n g_1)(\widetilde{x}\cdot y_C)+n h(-1/\sqrt{n})+n h(1/\sqrt{n}).\]
Since $T(\widetilde{x},C)\subset [-1/\sqrt{n},1/\sqrt{n}]$ and $T(-\widetilde{x},C)\subset [-1/\sqrt{n},1/\sqrt{n}]$, we conclude
\[ \min\left\{ U_g(-\widetilde{x},C),U_g(\widetilde{x},C)\right\}\leq n h(-1/\sqrt{n})+n h(1/\sqrt{n}) , \]
which along with \eqref{CrossPoly3} implies \eqref{CrossPoly2}. Obviously, the cross-polytope $C_{2n}$ is centered code, so equation \eqref{CrossPoly0} implies that $C_{2n}$ is an optimal code for the max-min polarization.

Next, let $C^*$ be an optimal max-min polarization code among all centered codes with cardinality $2n$. As $C_{2n}$ belongs to this class we have 
\[ n h(-1/\sqrt{n})+n h(1/\sqrt{n})= {\mathcal Q}_h(C_{2n}) = {\mathcal Q}_h(C^*) \leq \min\left\{ U_h(-\widetilde{x},C^*),U_h(\widetilde{x},C^*)\right\}.\]
From \eqref{CrossPoly3} we derive that 
\begin{align*} 
n h(-1/\sqrt{n})+n h(1/\sqrt{n}) &\leq \min\left\{ U_g(-\widetilde{x},C^*),U_g(\widetilde{x},C^*)\right\} \\
&= g_2\sum_{y\in C^*}\frac{n(\widetilde{x}\cdot y)^2-1}{n-1}-2n g_1|\widetilde{x}\cdot y_{C^*}|+n h(-1/\sqrt{n})+n h(1/\sqrt{n}),
\end{align*}
which implies that $T(\widetilde{x},C^*)=\{ -1/\sqrt{n},1/\sqrt{n}\}$ and $\widetilde{x}\cdot y_{C^*}=0$. Hence, there are $n$ inner products of each of the two values. 

Therefore, any optimal $C^*$ splits into a disjoint union of two codes $C_1^*$ and $C_2^*$, each of cardinality $n$, such that 
\[ C_1^*:=\{y_1,\dots,y_n\}, \quad C_2^*:=\{z_1,\dots,z_n\},  \quad \widetilde{x}\cdot y_i=-\widetilde{x}\cdot z_j=\frac{1}{\sqrt{n}},\ i,j=1,\dots,n.\]

\end{proof}
%

 \noindent{\bf Acknowledgments.}  This material is based upon work supported by the National Science Foundation under Grant No. DMS-1929284 while the authors were in residence at the Institute for Computational and Experimental Research in Mathematics in Providence, RI, during the {\it Colaborate@ICERM} program. The research of the first author was supported, in part, by Ministry of Education and Science of Bulgaria under Grant no. DO1-387/18.12.2020 “National Centre for High-Performance and Distributed Computing”. The research of the second author was supported, in part, by NSF grant DMS-1936543.
The research of the fifth author was supported, in part, by Bulgarian NSF grant KP-06-N32/2-2019.

\end{document}